\newtheorem{prop}{Proposition}[section]
\newtheorem{lemma}[prop]{Lemma}
\newtheorem{thm}[prop]{Theorem}
\newtheorem{cor}[prop]{Corollary}
\theoremstyle{definition}
\newcommand{\1}{\mbox{1}\hspace{-0.25em}\mbox{l}}
\newtheorem{remark}[prop]{Remark}
\numberwithin{equation}{section}
\newcommand{\confrac}[2]{%
  \frac{\displaystyle{%
    \strut\hfill{#1}\hfill\;\vrule}}%
      {\displaystyle{%
       \strut\vrule\;\hfill{#2}\hfill}}}%
\begin{document}

\author{Shintaro Suzuki and Hiroki Takahasi}

\address{Department of Mathematics,
Tokyo Gakugei University, 4-1-1 Nukuikita-machi Koganei-shi, Tokyo,  184-8501, JAPAN}
\email{shin05@u-gakugei.ac.jp}
\address{ Keio Institute of Pure and Applied Sciences (KiPAS), Department of Mathematics,
Keio University, Yokohama,
223-8522, JAPAN} 
\email{hiroki@math.keio.ac.jp}

\subjclass[2020]{11K50, 37A40, 37A44, 37C40}
\thanks{{\it Keywords}: random dynamical system; periodic points; the Gauss map; the R\'enyi map; thermodynamic formalism; large deviations}

\title[ 
 Representations of the Gauss-R\'enyi measure by ``periodic points'']
{Annealed and quenched representations of the Gauss-R\'enyi measure by ``periodic points''} 

\begin{abstract}
We consider independently identically distributed random compositions 
of the Gauss and R\'enyi maps that 
are related to Diophantine approximations.
Elaborating on methods in ergodic theory, thermodynamic formalism and large deviations, 
we show that weighted cycles of this random dynamical system 
equidistribute with respect to the Gauss-R\'enyi measure.
We present both annealed (sample-averaged) and quenched (samplewise) results.
\end{abstract}
\maketitle

\tableofcontents

\section{Introduction}

One leading idea in the qualitative theory of deterministic dynamical systems is 
to use the collection of periodic orbits as a spine 
to structure the dynamics. This idea traces back to 
Poincar\'e 
\cite{Po}: 
``{\it ... ce qui nous rend ces solutions p\'eriodiques si pr\'ecieuses, ... 
la seul br\`eche par o\`u nous puissions esseyer de p\'en\'etrer dans une place jusqu'ici r\'eput\'ee inabordable.}'' 
Bowen's pioneering results \cite{Bow71,Bow74} 
assert that periodic points of topologically mixing Axiom~A diffeomorphisms equidistribute with respect to the measure of maximal entropy. The importance of periodic orbits in descriptions of ergodic properties of natural invariant probability measures has long been recognized in the physics literature, see e.g., \cite{Cv91,GOY88}.
Cvitanovi\'c \cite{Cv91} proposed 
expansions
of dynamical characteristics into series or products that consist of infinitely many periodic orbits, to better analyze the characteristics taking advantage of the simple structure of each periodic orbit in the expansions.

By deterministic dynamical systems, we mean ordinary differential equations or iterated maps.
Systems with multiple evolution laws, called {\it random dynamical systems} \cite{Ar98}, are also relevant to consider. 
For a large class of random dynamical systems, 
 we expect that 
 periodic orbits
still play significant roles, but
it is not clear 
how periodic points 
should be defined. 

In discrete time, deterministic dynamical systems are iterations of one fixed map, whereas random dynamical systems are compositions of different maps chosen at random.
A naive idea is to use fixed points of random compositions of $n$ maps as substitutes for periodic points of period $n$. Such ``periodic points'' have been indeed considered, 
see e.g., \cite{Buz02,Rue90,SuzTak21}. 
For other substitutes for the concept of periodic points in the context random dynamical systems, see e.g.,  \cite{FJJK05,JK16,Kif00}.

 In  \cite{SuzTak21}, the authors 
   proved an analogue of Bowen's equidistribution theorem \cite{Bow71,Bow74}  for 
   random dynamical systems generated by 
   a class of 
   interval maps
   with finitely many branches.
   The aim of this paper is to extend this analogue
 to random dynamical systems generated by 
 the Gauss and R\'enyi maps.
 The Gauss map
 $T_0\colon(0,1]\to[0,1)$ and the R\'enyi map $T_{1}\colon[0,1)\to[0,1)$ are
respectively given by
 \[T_{0}x=\frac{1}{x}-\left\lfloor \frac{1}{x}\right\rfloor\quad\text{and}\quad 
 T_1x=\frac{1}{1-x}-\left\lfloor \frac{1}{1-x}\right\rfloor.\]
          The graph of $T_1$ is obtained by reversing the graph of  $T_0$ around the axis $\{x=1/2\}$, as shown in \textsc{Figure}~\ref{fig1}. 
Since both maps have infinitely many branches, the random dynamical systems they generate are beyond the scope of  
 \cite{SuzTak21}.

\begin{figure}
\begin{center}
\includegraphics[height=5cm,width=6.875cm]{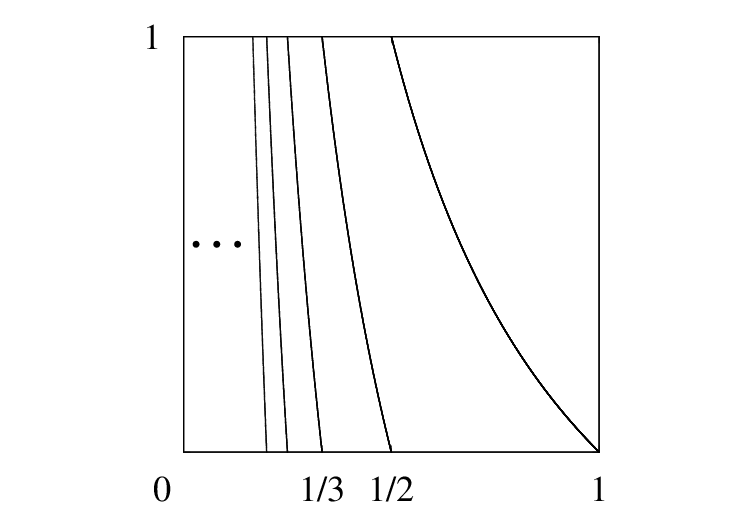}
\includegraphics[height=5cm,width=6.875cm]{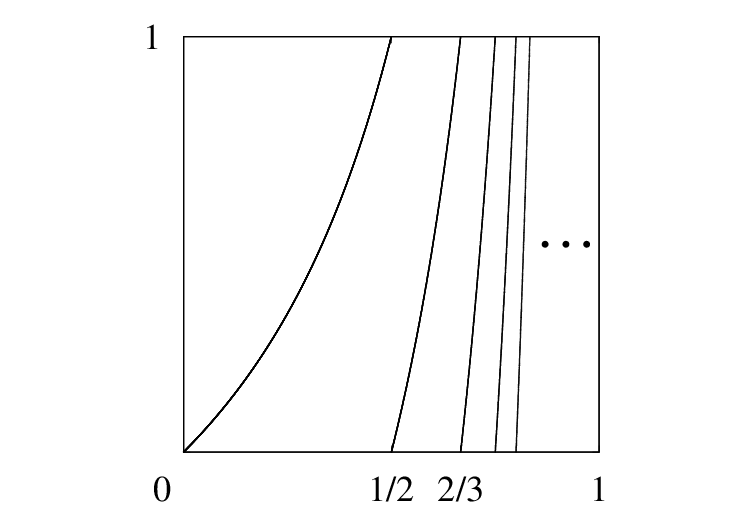}
\caption
{The graph of the Gauss map $T_0$ (left) and that of the R\'enyi map $T_1$ (right):
$T_0^{-1}(0)=\{1/k\colon k\in\mathbb N\}$, $T_1^{-1}(0)=\{(k-1)/k\colon k\in\mathbb N\}$; $T_0^{-1}(1)=T_1^{-1}(1)=\emptyset$;
$T_10=0$, $T_1'0=1$.
}\label{fig1}

\end{center}
\end{figure}

 For a sample path $\omega=(\omega_n)_{n=1}^\infty$
  in the product space
    $\Omega=\{0,1\}^{\mathbb N}$ of the discrete space $\{0,1\}$, 
  we consider a random composition 
  \[T_\omega^n=T_{\omega_n}\circ T_{\omega_{n-1}}\circ \cdots\circ T_{\omega_1}\ \text{ for } n\in\mathbb N.\] 
  Write
$T_\omega^0$ for the identity map on $[0,1]$.
 Let
$\Lambda_\omega$ denote the set of $x\in[0,1]$ such that $T_\omega^nx$ is defined for every $n\in\mathbb N$. Each $x\in\Lambda_\omega$
has a continued fraction expansion 
\begin{equation}\label{r-expansion}
x=
\omega_1+\confrac{(-1)^{\omega_1} }{C_{1}(\omega,x)} + \confrac{(-1)^{\omega_2} }{C_{2}(\omega,x)}  + \confrac{(-1)^{\omega_3} }{C_{3}(\omega,x)}+\cdots,
\end{equation}
where each
 $C_n(\omega,x)$, $n\in\mathbb N$ is a positive integer that is determined by
  $T_\omega^{n-1}x$, $\omega_{n}$, $\omega_{n+1}$, and satisfies
  $(-1)^{\omega_{n+1}}+C_{n}(\omega,x)\geq1$
  (see $\S$\ref{random-s} for details).
  This type of continued fractions was first considered by Perron \cite{Per50}.
In the case $\omega_n=0$ for all $n\in\mathbb N$ we obtain the well-known {\it regular continued fraction}
\[
x=\confrac{1 }{A_{1}(x)} + \confrac{1 }{A_{2}(x)}  + \confrac{1 }{A_{3}(x)}+\cdots,\]
where $A_n(x)=\lfloor 1/T_0^{n-1}x\rfloor$ for  $n\in\mathbb N$. In the case $\omega_n=1$ for all $n\in\mathbb N$ we obtain the {\it backward continued fraction} \[
x=
1-\confrac{1 }{B_{1}(x)} - \confrac{1 }{B_{2}(x)} -\confrac{1 }{B_{3}(x)} -\cdots,\]
where $B_n(x)=\lfloor 1/(1-T_1^{n-1}x)\rfloor+1$ for $n\in\mathbb N$.
 The backward continued fraction was used, for example, in computing certain inhomogeneous approximation constants    \cite{Pin01}. 
    For its connection with geodesic flows, see \cite{AdlFla84}.

It is the essential difference between
statistical properties of the  sequences $(A_n(x))_{n=1}^\infty$ and  $(B_n(x))_{n=1}^\infty$ that makes the random continued fraction interesting.
For Lebesgue almost every irrational $x$ in $(0,1)$,
each positive integer $k$ appears in  $(A_n(x))_{n=1}^\infty$ with frequency $\frac{1}{\log2}\log\frac{(k+1)^2}{k(k+2)}$, 
while  the frequency of $2$ in $(B_n(x))_{n=1}^\infty$ is $1$.
This is due to the fact that 
$T_0$ leaves invariant {\it the Gauss measure}
 $d\lambda_0=\frac{1}{\log2}\frac{dx }{x+1}$, 
while $T_1$ leaves invariant the infinite measure $\frac{dx}{x}$. More precisely, $x=0$ is a neutral fixed point of $T_1$: $T_10=0$ and $T_1'0=1$.
For more comparisons of the regular and backward continued fractions as well as more information on the singular behavior of the digit sequence in the backward continued fraction, see \cite{Aar86,AarNak03,Io10,IosKra02,PolWei99,Tak19,Tak22} for example.  

\subsection{Statements of results}
We consider an independently identically distributed (i.i.d.) random dynamical system generated by $T_0$ and $T_1$. This means that
$T_1$ is chosen with a fixed probability $p\in(0,1)$ at each step.
Let $m_p$ denote the Bernoulli measure on the sample space $\Omega$
associated with the probability vector $(1-p,p)$. 
  By \cite[Theorem~5.2]{Ino12}, 
there exists a unique 
Borel probability measure $\lambda_p$ on $[0,1]$ that is absolutely continuous with respect to the Lebesgue measure on $[0,1]$ and satisfies $\mu=(1-p)\cdot\mu\circ T_0^{-1}+p\cdot\mu \circ T_1^{-1}$.  
The measure $\lambda_p$, called the {\it Gauss-R\'enyi measure}, is significant since 
for $m_p$-almost every $\omega\in\Omega$ and Lebesgue almost every $x\in\Lambda_\omega$, we have
  \[\lim_{n\to\infty}\frac{1}{n}\sum_{i=0}^{n-1}f(T_\omega^ix)=\int fd\lambda_p\ \text{
 for any continuous $f\colon[0,1]\to\mathbb R$.}\]

 For $p\in[0,1)$, 
 let $h_p\colon[0,1]\to[0,\infty)$ denote the Radon-Nikod\'ym derivative 
 of $\lambda_p$  with respect to the Lebesgue measure on $[0,1]$. We know that $h_0(x)=\frac{1}{\log2}\frac{1}{x+1}$.
 For any $p\in(0,1)$, $h_p$
  is bounded from above and away from $0$ \cite[Proposition~3.4]{KKV17}.
 An explicit formula for $h_p$ is desired, since it is related to the frequency of digits in the random continued fraction expansion \eqref{random-s}. 
 Up to present, 
   no algebraic formula for $h_p$ is known except for the case $p=0$. Kalle et al. proved that $h_p$ is $C^\infty$ for any $p\in(0,1)$
 \cite{KMTV22}.
 Bahsoun et al. \cite{BRS20} obtained a functional-analytic formula for $h_p$ for $p\in(0,1)$ sufficiently near $0$. 

 Our aim here is to represent $\lambda_p$ and $h_p$ 
 for {\it any} $p\in(0,1)$, 
 using the collection of ``periodic points'' 
\[\bigcup_{\omega\in\Omega}\bigcup_{n=1}^\infty{\rm Fix}(T_\omega^n),\ \ {\rm Fix}(T_\omega^n)=\{x\in \Lambda_\omega\colon T_\omega^nx=x\}.\] 
 Elements of this set are called {\it random cycles} \cite{SuzTak21}. 
We first present a {\it  quenched} (samplewise) representation, and then an {\it annealed} (sample-averaged) one. 
 For $\omega\in\Omega$ and $n\in\mathbb N$ define
\begin{equation}\label{zomegan}Z_{\omega,n}=\sum_{x\in{\rm Fix}(T_{\omega }^n)}\!\!\!\!\!\!|(T_{\omega}^n)'x|^{-1},\end{equation}
which plays the role of a normalizing constant. 
 The derivatives of $T_0$ and $T_1$ at  their discontinuities are the one-sided derivatives.
For a topological space $X$, 
let $\mathcal M(X)$
denote the space of Borel probability measures on $X$ endowed with the weak* topology.
 For $\omega\in\Omega$, $x\in\Lambda_\omega$ and $n\in\mathbb N$, let $V_n^\omega(x)\in\mathcal M([0,1])$ denote the uniform probability distribution on the random orbit 
 $(T_\omega^ix)_{i=0}^{n-1}$.
 For $p\in\{0,1\}$,
let $m_p$ denote the Borel probability measure on $\Omega$ that is the unit point mass at the point
  $p^\infty=ppp\cdots$ in $\Omega$.
Let $\lambda_1\in\mathcal M([0,1])$ denote the unit point mass at $0$.

\begin{thm}[quenched representation of the Gauss-R\'enyi measure]\label{thm-a}
Let $p\in(0,1)$. The following statements hold:
 
 \begin{itemize}
\item[(a)] for $m_p$-almost every $\omega\in\Omega$ and any continuous function $F\colon \mathcal M([0,1])\to\mathbb R$, 
\[\lim_{n\to\infty}\frac{1}{Z_{\omega,n}}\sum_{x\in{\rm Fix}(T_\omega^n)}|(T^n_\omega)'x|^{-1}
F(V_n^{\omega}(x))=F(\lambda_p);\]

\item[(b)]  for $m_p$-almost every $\omega\in\Omega$ and any continuous function $f\colon [0,1]\to\mathbb R$, 
\[\lim_{n\to\infty} \frac{1}{Z_{\omega,n} }\sum_{x\in{\rm Fix}(T^n_\omega)}|(T_{\omega}^n)'x|^{-1}\int fdV_n^\omega(x)=\int f d\lambda_p.\]\end{itemize}
\end{thm}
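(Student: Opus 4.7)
The plan is to derive both (a) and (b) from a single quenched level-2 large deviation principle. Define the weighted empirical measure on $\mathcal{M}([0,1])$ by
$$\mathcal{V}_n^\omega=\frac{1}{Z_{\omega,n}}\sum_{x\in{\rm Fix}(T_\omega^n)}|(T_\omega^n)'x|^{-1}\,\delta_{V_n^\omega(x)}\in\mathcal{M}(\mathcal{M}([0,1])),$$
so that (a) is exactly the assertion $\mathcal{V}_n^\omega\to\delta_{\lambda_p}$ weakly for $m_p$-a.e.\ $\omega$. Statement (b) is the image of (a) under the weakly continuous evaluation $F(\mu)=\int f\,d\mu$, so it suffices to prove (a). I would do this by establishing a large deviation principle for $\mathcal{V}_n^\omega$ with a convex rate function $I\colon\mathcal{M}([0,1])\to[0,\infty]$ whose unique zero is $\lambda_p$; concentration at a single minimizer then forces $\mathcal{V}_n^\omega\to\delta_{\lambda_p}$.

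The rate function should come from random thermodynamic formalism applied to the geometric potential $\varphi_\omega(x)=-\log|T'_{\omega_1}(x)|$. Indeed
$$|(T_\omega^n)'x|^{-1}=\exp\Bigl(\sum_{i=0}^{n-1}\varphi_{\sigma^i\omega}(T_\omega^i x)\Bigr),$$
so the weighted cycle sums are partition functions whose exponential growth rates are the topological pressures $P(\varphi+g)$ of perturbations. The function $I$ should be the Legendre transform of $g\mapsto P(\varphi+g)$, and a quenched variational principle should identify its zero set with the set of equilibrium states for $\varphi$ in the random sense. Since the standard identification makes $\lambda_p$ the unique such equilibrium state---it being the unique absolutely continuous stationary measure of the random system, by \cite{Ino12}---uniqueness of the zero follows. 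The upper LDP bound should come from Chebyshev's inequality combined with a transfer-operator representation of the twisted partition functions via iterates of the random transfer cocycle $\mathcal{L}_{\omega,g}^n$, and the lower bound from a random shadowing argument producing cycles whose orbit measures approximate any admissible $\mu$.

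The main obstacle is that $T_1$ has a neutral fixed point at $0$ with $T_1'0=1$, so neither $T_1$ nor typical random compositions $T_\omega^n$ are uniformly expanding, and the random transfer operators do not possess a spectral gap on classical spaces such as $BV$ or Lipschitz. For $m_p$-a.e.\ $\omega$ the Gauss map appears with frequency $1-p>0$, however, which keeps the cocyclic Lyapunov exponent strictly positive and pushes random orbits away from $0$ at exponential speed. I would deal with the non-uniform expansion by an induced scheme, inducing on returns to an interval bounded away from $0$, or by a Young-tower construction adapted to the random cocycle, restoring uniform hyperbolicity at the cost of a countable-state tower. A secondary difficulty is that each of $T_0,T_1$ has infinitely many branches; additional tail estimates on the digits $C_n(\omega,x)$, combined with the uniform bounds on $h_p$ from \cite{KKV17}, should yield the quasi-compactness needed for both the spectral theory and the LDP, thereby extending the finite-branch methods of \cite{SuzTak21}.
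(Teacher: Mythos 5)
Your reduction of (b) to (a) via the evaluation map $\mu\mapsto\int f\,d\mu$ is exactly what the paper does, and the broad outline (level-2 LDP, unique minimizer, inducing to tame the neutral fixed point of $T_1$) is the right one. But there are two substantive problems with the plan as written.

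First, the uniqueness of the zero of the rate function is the heart of the matter, and you dispose of it in one sentence by appealing to the uniqueness of the absolutely continuous stationary measure from \cite{Ino12}. That does not suffice. A minimizer of the rate function need not a priori be an equilibrium state at all (the paper cites Sarig's example of a Gibbs state $\mu$ with $I_\phi(\mu)=0$ and $\int\phi\,d\mu=-\infty$), and even among invariant measures one must rule out competitors such as the point mass at the neutral fixed point $(1^\infty,0)$, which is singular and has zero entropy. The paper's argument requires: (i) a proof that every minimizer is an equilibrium state, which hinges on the summability exponent $\beta_\infty(\psi)=1/2<1$ of the symbolic potential and on the upper semicontinuity of $\mu\mapsto h(\mu)/(-\int\psi\,d\mu)$ from \cite{Tak20}; (ii) uniqueness of the equilibrium state, proved by inducing off the cylinder $[1]$, verifying local H\"older continuity of the induced potential (Lemmas~\ref{Holder-d}--\ref{c-dist}, which need the second-derivative control of Lemma~\ref{scope'} near the neutral fixed point), invoking the Gibbs/equilibrium theory of \cite{MauUrb03} for the induced shift, and transferring back via Abramov--Kac; and (iii) an explicit computation showing $h(\delta_{\pi^{-1}(1^\infty,0)})+\int\psi\,d\delta_{\pi^{-1}(1^\infty,0)}=\log p<0$. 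None of this is implied by uniqueness of the a.c.\ stationary measure.

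Second, your architecture is to run a \emph{quenched} thermodynamic formalism (random transfer cocycles, quenched pressure, random shadowing for the lower bound). The paper deliberately avoids this: it proves a deterministic level-2 LDP for the skew product $R$ coded as a countable full shift with the \emph{annealed} potential $\varphi(\omega,x)=\log p(\omega_1)-\log|T_{\omega_1}'x|$, and then transfers only the \emph{upper} bound and exponential tightness to $m_p$-a.e.\ sample by Markov's inequality and Borel--Cantelli, using the bounded-distortion fact that $Z_{\omega,n}$ agrees with $Z_{p,n}$ up to the subexponential factor $\exp(D_n(\varphi))$ (Lemmas~\ref{b-d}--\ref{lem-cal1}). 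A quenched upper bound plus uniqueness of the minimizer already forces the convergence in (a); no quenched lower bound, and hence no random shadowing construction, is needed. Your route would additionally have to produce a quenched LDP lower bound and a quenched variational principle for a non-uniformly expanding cocycle with infinitely many branches, which is considerably harder than the problem requires and is not carried out in your sketch. One further small correction: the transfer operator of $R$ \emph{does} have a spectral gap on a suitable space \cite{KKV17,KMTV22}; the obstruction the paper points to is that a spectral gap is not known to imply the LDP, not the absence of a gap.
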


As already noted,
the cases $p=0$ and $p=1$ correspond to the iteration of $T_0$ and that of $T_1$ respectively. The convergences in Theorem~\ref{thm-a} in these two cases
were established in \cite{Tak20} (see  \cite{FieFieYur02} for a closely related result) and
 \cite{Tak22} respectively. The main concern of this paper is the case $p\in(0,1)$.

 Theorem~\ref{thm-a}(a) implies
 Theorem~\ref{thm-a}(b) (see \S\ref{pf-thma}).
 The latter deserves to be called a quenched representation of $\lambda_p$ in terms of random cycles. 
 For $\omega\in\Omega$, $x\in\Lambda_\omega$, a subset $A$ of $[0,1]$ and $n\in\mathbb N$, let
\[e_n(\omega,x,A)=\frac{\#\{0\leq i\leq n-1\colon T_\omega^ix\in A\}}{n}.\]
By the portmanteau theorem, 
Theorem~\ref{thm-a}(b)
 is equivalent to the following:  for $m_p$-almost every $\omega\in\Omega$ and any Borel subset $A$ of $[0,1]$ with $\lambda_p(\partial A)=0$, 
 \begin{equation}\label{quench}\lim_{n\to\infty} \frac{1}{Z_{\omega,n} }\sum_{x\in{\rm Fix}(T^n_\omega)  }|(T_{\omega}^n)'x|^{-1}e_n(\omega,x,A)=\lambda_p(A).\end{equation}

The meaning of Theorem~\ref{thm-a}(a) may be a little less intuitive Theorem~\ref{thm-a}(b). By the portmanteau theorem it
 is equivalent to the following:  for for $m_p$-almost every $\omega\in\Omega$ and any Borel subset $\mathcal A$ of $\mathcal M(\Lambda)$ with $\lambda_p\notin\partial\mathcal A$, 
\[\lim_{n\to\infty}\frac{1}{Z_{\omega,n}}\sum_{\substack{x\in{\rm Fix}(T_\omega^n)\\ V_n^\omega(x)\in\mathcal A }}|(T^n_\omega)'x|^{-1}
=\1_{\mathcal A}(\lambda_p),\]
where $\1_{\mathcal A}$ denotes the indicator function of $\mathcal A$. In particular, if $\lambda_p\in\mathcal A$ then $V_n^\omega(x)\in\mathcal A$ holds for almost every $x\in{\rm Fix}(T_\omega^n)$
as $n\to\infty$.

To move on to an annealed counterpart, 
for $p\in[0,1]$, $n\in\mathbb N$ and 
$\omega\in\Omega$ we set
\[Z_{p,n}=\int Z_{\omega,n}dm_p(\omega),\]
which plays the role of a normalizing constant.
\begin{thm}[annealed representation of the Gauss-R\'enyi measure]
\label{thm-b}
Let $p\in(0,1)$. The following statements hold: 
\begin{itemize}
\item[(a)] for any continuous function $F\colon \mathcal M([0,1])\to\mathbb R$, 
\[\lim_{n\to\infty}\frac{1}{Z_{p,n}}\int dm_p(\omega)\sum_{x\in{\rm Fix}(T_\omega^n)}|(T_{\omega}^n)'x|^{-1}F(V_n^\omega(x))=F(\lambda_p);\]
\item[(b)] for any continuous function $f\colon [0,1]\to\mathbb R$,
\[\lim_{n\to\infty}\frac{1}{Z_{p,n}}\int dm_p(\omega)\sum_{x\in{\rm Fix}(T_{\omega }^n)}|(T_{\omega}^n)'x|^{-1}
\int fdV^{\omega}_n(x)= \int fd\lambda_p.\]
\end{itemize}
\end{thm}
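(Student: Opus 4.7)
The plan is to deduce Theorem~\ref{thm-b} from the already-established quenched statement Theorem~\ref{thm-a}(a), the key step being a uniform-integrability control of the random weights $w_n(\omega):=Z_{\omega,n}/Z_{p,n}$ with respect to $m_p$. Writing
\[
G_n(\omega)=\sum_{x\in{\rm Fix}(T_\omega^n)}|(T_\omega^n)'x|^{-1}F(V_n^\omega(x)),\qquad Y_n(\omega)=\frac{G_n(\omega)}{Z_{\omega,n}},
\]
Theorem~\ref{thm-a}(a) tells us that $Y_n(\omega)\to F(\lambda_p)$ for $m_p$-a.e.\ $\omega$, while $|Y_n|\leq\|F\|_\infty$; and the expression in Theorem~\ref{thm-b}(a) equals $\int Y_n\,w_n\,dm_p$ with $\int w_n\,dm_p=1$ by the very definition of $Z_{p,n}$.

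If the family $\{w_n\}_{n\geq 1}$ is uniformly integrable with respect to $m_p$, then $\int Y_n w_n\,dm_p\to F(\lambda_p)$ follows from a standard Egorov-plus-uniform-integrability splitting: on a set $A\subset\Omega$ of large $m_p$-measure, $Y_n\to F(\lambda_p)$ uniformly, so the contribution of $A$ to $\int(Y_n-F(\lambda_p))w_n\,dm_p$ is small; and the contribution of $\Omega\setminus A$ is bounded by $2\|F\|_\infty\int_{\Omega\setminus A}w_n\,dm_p$, which is small by uniform integrability. The substantive task is therefore the $L^q$-bound
\[
\sup_{n\geq 1}\int\left(\frac{Z_{\omega,n}}{Z_{p,n}}\right)^{q}dm_p(\omega)<\infty\qquad\text{for some }q>1,
\]
equivalently the moment comparison $\int Z_{\omega,n}^{q}\,dm_p\leq C\,Z_{p,n}^{q}$.

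I expect this $L^{q}$-estimate to be the main obstacle. The natural attack is through the thermodynamic formalism of the paper: view $Z_{\omega,n}$ as (essentially) the trace of the random cocycle $\mathcal{L}_{\omega_n}\circ\cdots\circ\mathcal{L}_{\omega_1}$ of Perron--Frobenius operators for $T_0,T_1$ on a suitable Banach space of regular densities; show that $n^{-1}\log Z_{\omega,n}$ converges $m_p$-a.s.\ and $n^{-1}\log Z_{p,n}$ converges to a common pressure $P_p$; and quantify the fluctuations by a quenched large-deviation upper bound at $P_p$. The last ingredient upgrades to the desired moment estimate for $q$ close to $1$. The delicate point is that $T_1$ has an indifferent fixed point at $0$, which rules out uniform hyperbolicity and forces a careful choice of function space, for instance by inducing on a sub-interval bounded away from $0$ as in \cite{Tak22}.

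Finally, Theorem~\ref{thm-b}(b) drops out of Theorem~\ref{thm-b}(a) by specialising to the continuous function $F_f\colon\nu\mapsto\int f\,d\nu$ on $\mathcal M([0,1])$, by the same manipulation used in \S\ref{pf-thma} to derive Theorem~\ref{thm-a}(b) from Theorem~\ref{thm-a}(a).
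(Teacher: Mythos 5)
Your reduction of part (b) to part (a) via $F_f\colon\nu\mapsto\int f\,d\nu$ is fine, and the Egorov-plus-uniform-integrability scheme for passing from the quenched limit of Theorem~\ref{thm-a}(a) to the annealed one is sound in principle (and not circular, since Theorem~\ref{thm-a} is established independently of Theorem~\ref{thm-b}). The difficulty is that the entire weight of the argument rests on the uniform integrability of $w_n=Z_{\omega,n}/Z_{p,n}$ (equivalently your $L^q$ bound), and this is precisely the point you do not prove. The estimates actually available — Lemma~\ref{b-d} together with Lemma~\ref{mild-lem} — give only $\exp(-D_n(\varphi))\le Z_{\omega,n}\le \exp(D_n(\varphi))$ with $D_n(\varphi)=O(\sqrt n)$, the $\sqrt n$ being unavoidable because of the neutral fixed point of $T_1$; the resulting crude bound $\int w_n^q\,dm_p\le \exp(2qD_n(\varphi))$ is unbounded in $n$. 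Your proposed upgrade (almost sure convergence of $n^{-1}\log Z_{\omega,n}$ to the pressure plus a quenched large-deviation bound at that pressure) does not close the gap: large deviations at speed $n$ say nothing about multiplicative fluctuations of size $\exp(O(\sqrt n))$, and ruling those out amounts to a genuinely new comparison between the quenched and annealed partition functions that you would have to prove from scratch. As written, the proof is incomplete at its central step.

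The intended route is different and avoids the issue entirely: by \eqref{zpn}, the annealed average over random cycles is \emph{exactly} the push-forward under $\Pi_*$ of the measure $\tilde\mu_n$ built from periodic points of the skew product $R$ (each $(\omega,x)\in{\rm Fix}(R^n)$ carries the weight $Q_p^n(\omega)|(T_\omega^n)'x|^{-1}$, and $\Pi_*(V_n^R(\omega,x))=V_n^\omega(x)$), so Theorem~\ref{thm-b}(a) follows at once from the annealed level-2 convergence in Theorem~\ref{level-2-thm}(b), with no comparison between $Z_{\omega,n}$ and $Z_{p,n}$ needed. Either supply a genuine proof of the uniform integrability, or derive the statement directly from Theorem~\ref{level-2-thm}(b).
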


Theorem~\ref{thm-b}(a) implies
Theorem~\ref{thm-b}(b) (see \S\ref{pf-thmb}).
 The latter deserves to be called an annealed representation of $\lambda_p$ in terms of random cycles since
 it is equivalent to the following:  for any Borel subset $A$ of $[0,1]$ with $\lambda_p(\partial A)=0$, 
 \begin{equation}\label{anneal}\lim_{n\to\infty} \frac{1}{Z_{p,n} }\int dm_p(\omega)\sum_{x\in{\rm Fix}(T^n_\omega)  }|(T_{\omega}^n)'x|^{-1}e_n(\omega,x,A)=\lambda_p(A).\end{equation}
 Theorem~\ref{thm-b}(a)
 is equivalent to the following:  for any Borel subset $\mathcal A$ of $\mathcal M(\Lambda)$ with $\lambda_p\notin\partial\mathcal A$, 
\[\lim_{n\to\infty}\frac{1}{Z_{p,n}}\int dm_p(\omega)\sum_{\substack{x\in{\rm Fix}(T_\omega^n)\\ V_n^\omega(x)\in\mathcal A }}|(T^n_\omega)'x|^{-1}
=\1_{\mathcal A}(\lambda_p).\]

Since the Radon-Nikod\'ym derivative $h_p$ of the Gauss-R\'enyi measure $\lambda_p$ is continuous,  
from \eqref{quench} and \eqref{anneal} 
we obtain its quenched and annealed representations in terms of random cycles.
 
\begin{cor}[quenched and annealed representations of the Radon-Nikod\'ym derivative]\label{cor1}
Let $p\in(0,1)$. The following statements hold:
\begin{itemize}
\item[(a)] for $m_p$-almost every $\omega\in\Omega$ and any $y\in(0,1)$,
 \[\label{recover}h_p(y)=\lim_{\varepsilon\to+0}\frac{1}{2\varepsilon}\lim_{n\to\infty}\frac{1}{Z_{\omega,n}}\sum_{x\in{\rm Fix}(T^n_\omega)}|(T_{\omega}^n)'x|^{-1}e_n(\omega,x,[y-\varepsilon,y+\varepsilon] );\]
\item[(b)] for any $y\in(0,1)$, 
\[h_p(y)=\lim_{\varepsilon\to+0}\frac{1}{2\varepsilon}\lim_{n\to\infty}\frac{1}{Z_{p,n}}\int dm_p(\omega)
\sum_{x\in{\rm Fix}(T_{\omega}^n)}|(T_\omega^n)'x|^{-1}e_n(\omega,x,[y-\varepsilon,y+\varepsilon]).\]
\end{itemize}
\end{cor}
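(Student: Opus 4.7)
The plan is to derive Corollary~\ref{cor1} directly from the portmanteau reformulations \eqref{quench} and \eqref{anneal} of Theorems~\ref{thm-a}(b) and~\ref{thm-b}(b), combined with absolute continuity of $\lambda_p$ and continuity of its density $h_p$. Both parts are a two-step affair: an application of the theorem just proved to a shrinking family of intervals, followed by a one-dimensional Lebesgue differentiation step.

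For part (a), I would fix $y\in(0,1)$ and $\varepsilon>0$ small enough so that $[y-\varepsilon,y+\varepsilon]\subset(0,1)$, and set $A=[y-\varepsilon,y+\varepsilon]$. Since $\lambda_p$ is absolutely continuous, $\lambda_p(\partial A)=\lambda_p(\{y-\varepsilon,y+\varepsilon\})=0$, so $A$ is a $\lambda_p$-continuity set. Identity \eqref{quench} then gives, for $m_p$-almost every $\omega$,
\[\lim_{n\to\infty}\frac{1}{Z_{\omega,n}}\sum_{x\in{\rm Fix}(T_\omega^n)}|(T_{\omega}^n)'x|^{-1}e_n(\omega,x,A)=\lambda_p(A).\]
The crucial point here is that the $m_p$-full-measure set of $\omega$'s does not depend on $A$, hence not on $y$ or $\varepsilon$, because \eqref{quench} is really, by the portmanteau theorem, a statement of weak* convergence of the random probability measures $\mu_{\omega,n}=\frac{1}{Z_{\omega,n}}\sum_{x\in{\rm Fix}(T_\omega^n)}|(T_{\omega}^n)'x|^{-1}V_n^\omega(x)$ to $\lambda_p$, and weak* convergence automatically gives convergence on every continuity set simultaneously.

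Dividing the above identity by $2\varepsilon$ and letting $\varepsilon\to +0$, it remains to show
\[\lim_{\varepsilon\to+0}\frac{1}{2\varepsilon}\lambda_p([y-\varepsilon,y+\varepsilon])=h_p(y),\]
which is immediate from $\lambda_p([y-\varepsilon,y+\varepsilon])=\int_{y-\varepsilon}^{y+\varepsilon}h_p(t)\,dt$ and continuity of $h_p$ at $y$ (the $C^\infty$ regularity of \cite{KMTV22} is more than enough). This finishes (a), and part (b) follows by exactly the same two steps with \eqref{anneal} in place of \eqref{quench}; since the annealed expressions are deterministic, no null set arises. I do not anticipate any substantial obstacle, as the deduction is a soft consequence of Theorems~\ref{thm-a} and~\ref{thm-b}; the only point requiring attention is the uniformity in $y$ and $\varepsilon$ of the null $\omega$-set in (a), and this is handled by the portmanteau theorem.
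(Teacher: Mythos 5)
Your proposal is correct and follows exactly the route the paper intends: the paper derives Corollary~\ref{cor1} in one sentence from \eqref{quench} and \eqref{anneal} applied to the $\lambda_p$-continuity sets $[y-\varepsilon,y+\varepsilon]$, together with the continuity of $h_p$. Your additional remark that the $m_p$-null set in (a) is independent of $y$ and $\varepsilon$ because \eqref{quench} encodes a single weak* convergence statement is a worthwhile clarification, but it is the same argument.
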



 Our main results altogether assert that the collection of random cycles capture relevant information of the Gauss-R\'enyi random dynamics.
Since random cycles can be defined for general random dynamical systems, their
relevance in descriptions of random dynamical properties should be investigated in a much more broader context. Our main results support the relevance, while 
Buzzi \cite{Buz02} earlier proved that a dynamical zeta function defined with random cycles of certain random matrices cannot be extended beyond its disk of holomorphy, almost surely.
Under suitable assumptions, dynamical zeta functions of deterministic dynamical systems can be extended to meromorphic functions, and their zeros/poles are related to statistical properties of the underlying dynamics.
With our results including \cite{SuzTak21} and Buzzi's one \cite{Buz02} in mind, 
which information is captured by random cycles and which is not should be closely examined in the future.

\subsection{Method of proofs of the main results}
A basic strategy for proofs of our main results is to represent the i.i.d. random dynamical system generated by $T_0$ and $T_1$ as a skew product, and analyze the corresponding deterministic dynamical system.
Let
$\theta\colon\Omega\to\Omega$ denote the left shift:
$(\theta \omega)_n=\omega_{n+1}$ for $n\in\mathbb N$. 
Let
\[E=\{(\omega,x)\in\Omega\times[0,1]\colon(\omega_1,x)\in \{(0,0),(1,1)\}\},\]
and define $R\colon(\Omega\times[0,1])\setminus E\to \Omega\times[0,1]$ by
\[R(\omega,x)=(\theta\omega,T_{\omega_1}x).\]
Let
\[\Lambda=\bigcap_{n=0}^\infty R^{-n}\left((\Omega\times[0,1])\setminus E\right),\]
which is a non-compact set.
We still denote   $R|_{\Lambda}$ by $R$ and call it the {\it Gauss-R\'enyi map}. 
 We have
$R^n(\omega,x)=(\theta^n\omega,T_{\omega}^nx)$ for $(\omega,x)\in \Lambda$ and
 $n\in\mathbb N$, and so
\[\Lambda_\omega=\{x\in[0,1]\colon(\omega,x)\in\Lambda\}\] for every $\omega\in\Omega$.
For any $p\in[0,1]$, the map $R$ leaves invariant the Borel probability measure
$m_p\otimes\lambda_p$, the restriction of the product measure of $m_p$ and $\lambda_p$ to $\Lambda$.

For each $n\in\mathbb N$, let ${\rm Fix}(R^n)$ denote the set of periodic points of $R$ of period $n$.
A key observation is that $x\in{\rm Fix}(T_\omega^n)$ implies $(\omega',x)\in {\rm Fix}(R^n)$ where $\omega'\in\Omega$ is the repetition of the word $\omega_1\cdots\omega_n$ in $\omega$. For this reason,
 properties of random cycles may be analyzed through the analysis of periodic points of $R$.
Much of our effort is devoted to establishing annealed and quenched level-2 large deviations upper bounds for periodic points of $R$, and derive the desired convergences from the large deviations upper bounds.
 For $p\in[0,1]$, $n\in\mathbb N$ and $\omega\in\Omega$, define
  \[Q_{p}^n(\omega)=(1-p)^{\#\{1\leq i\leq n\colon \omega_i=0\}}p^{\#\{1\leq i\leq n\colon \omega_i=1\}},\]
  where we put $0^0=1$ for convenience.
  Notice that
\begin{equation}\label{zpn}Z_{p,n}= \sum_{(\omega,x)\in {\rm Fix}(R^n) }Q_p^n(\omega)|(T^{n}_{\omega})'x|^{-1}.\end{equation}
For $(\omega,x)\in\Lambda$
 and $n\in\mathbb N$, let
 $V_n^R(\omega,x)\in\mathcal M(\Lambda)$ denote
 the uniform probability distribution on the orbit $(R^i(\omega,x))_{i=0}^{n-1}$.
 Let  $\delta_{V_n^R(\omega,x)}$
  denote the Borel probability measure on $\mathcal M(\Lambda)$ that is the unit point mass at $V_n^R(\omega,x)$.
 Define a sequence $(\tilde\mu_n)_{n=1}^\infty$ of Borel probability measures on $\mathcal M(\Lambda)$ by
 \[\tilde\mu_n=\frac{1}{Z_{p,n}}
  \sum_{(\omega,x)\in {\rm Fix}(R^n) }Q_p^n(\omega)|(T^{n}_{\omega})'x|^{-1}\delta_{V_n^R(\omega,x)}.\]

\begin{thm}[annealed level-2 Large Deviation Principle]
\label{level-2-thm}
Let $p\in(0,1)$. The following statements hold:
\begin{itemize}
\item[(a)] $(\tilde\mu_n)_{n=1}^\infty$ is exponentially tight, and satisfies the LDP with the convex good rate function $I_p\colon\mathcal M(\Lambda)\to[0,\infty]:$
for any open subset $\mathcal G$ of $\mathcal M(\Lambda)$,
 \[\liminf_{n\to\infty}\frac{1}{n}\log \tilde\mu_{n}(\mathcal G)\geq -\inf_{\mathcal G} I_p,\] and
 for any closed subset $\mathcal C$ of $\mathcal M(\Lambda)$,
 \[\limsup_{n\to\infty}\frac{1}{n}\log \tilde\mu_{n}(\mathcal C)\leq -\inf_{\mathcal C} I_p.\]
The minimizer of $I_p$ is unique and it is $m_p\otimes\lambda_p$;
\item[(b)] 
for any bounded continuous function $F\colon \mathcal M(\Lambda)\to\mathbb R$, 
\[\lim_{n\to\infty}\frac{1}{Z_{p,n}}
  \sum_{(\omega,x)\in {\rm Fix}(R^n) }Q_p^n(\omega)|(T^{n}_{\omega})'x|^{-1}F(V_n^R(\omega,x))=F(m_p\otimes\lambda_p).\]
\end{itemize}
\end{thm}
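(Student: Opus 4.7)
The plan is to recognize the weighted sums over $\mathrm{Fix}(R^n)$ as Birkhoff partition functions for the skew product $R\colon\Lambda\to\Lambda$ with geometric potential
\[\varphi(\omega,x)=\log\bigl(p^{\omega_1}(1-p)^{1-\omega_1}\bigr)-\log|T_{\omega_1}'x|,\]
so that $Q_p^n(\omega)|(T_\omega^n)'x|^{-1}=\exp\bigl(\sum_{i=0}^{n-1}\varphi(R^i(\omega,x))\bigr)$, and then to deduce the level-2 LDP from the thermodynamic formalism of $R$. A first step is to code $\Lambda$ as a countable Markov shift over $(\{0,1\}\times\mathbb N)^{\mathbb N}$ and induce on a compact Gibbs-Markov base that excises neighborhoods of the neutral fixed point $(1^{\infty},0)$. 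The absolutely continuous invariant measure $m_p\otimes\lambda_p$ obeys the Pesin-Rokhlin entropy formula on the fibers, which together with Abramov's formula gives
\[h_{m_p\otimes\lambda_p}(R)+\int\varphi\,d(m_p\otimes\lambda_p)=0,\]
so the topological pressure satisfies $P(\varphi)\geq 0$; the reverse inequality follows from approximation by the induced system. This identifies the candidate convex good rate function
\[I_p(\mu)=\begin{cases}-h_\mu(R)-\displaystyle\int\varphi\,d\mu&\text{if $\mu$ is $R$-invariant,}\\ +\infty&\text{otherwise,}\end{cases}\]
whose zero set is exactly $\{m_p\otimes\lambda_p\}$ by uniqueness of the equilibrium state for the Gibbs-Markov inducing scheme.

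For the LDP bounds, the upper bound is obtained by grouping periodic points of $R^n$ according to their $n$-cylinder in the Markov coding. Bounded distortion on each cylinder allows us to replace $|(T_\omega^n)'x|^{-1}$ by the supremum of the Birkhoff weight over the cylinder, after which a standard covering argument reduces the weighted count of cylinders meeting a weak* neighborhood $\mathcal U$ of $\mu$ to a Gibbs sum bounded above by $\exp(-n\inf_{\mathcal U}I_p+o(n))$. For the lower bound I would follow the Kifer-type scheme already used by the authors in \cite{SuzTak21,Tak22}: approximate an arbitrary $R$-invariant measure $\mu$ with finite $I_p(\mu)$ by measures supported on compact uniformly expanding Gibbs-Markov subsystems and produce, within those subsystems, genuine periodic orbits of period near $n$ whose empirical measures lie in $\mathcal U$ and whose Birkhoff weights realize $\exp(-nI_p(\mu)+o(n))$.

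The hard part will be establishing exponential tightness of $(\tilde\mu_n)_{n=1}^{\infty}$ on $\mathcal M(\Lambda)$, because $R$ carries both a neutral fixed point at $(1^{\infty},0)$ and countably many branches coming from the two infinite digit alphabets. Concretely, for a compact exhaustion $K_N\uparrow\Lambda$ one must prove
\[\limsup_{n\to\infty}\frac{1}{n}\log\tilde\mu_n\bigl(\{\nu\in\mathcal M(\Lambda):\nu(\Lambda\setminus K_N)>\varepsilon\}\bigr)\leq -c(N,\varepsilon),\]
with $c(N,\varepsilon)\to\infty$ as $N\to\infty$. The contribution from periodic orbits with long excursions into a neighborhood of the neutral fixed point is controlled by a pressure-drop estimate on the induced Gibbs-Markov system, in the spirit of the argument used in \cite{Tak22} for the purely backward case $p=1$, while the contribution near the infinite-branch boundary is handled by the summability of cylinder derivatives.

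Finally, part (b) is immediate from part (a): exponential tightness yields tightness of $(\tilde\mu_n)_{n=1}^\infty$, every weak* subsequential limit is supported on the zero set of $I_p$, and that zero set is the singleton $\{m_p\otimes\lambda_p\}$. Therefore $\tilde\mu_n\to\delta_{m_p\otimes\lambda_p}$ in $\mathcal M(\mathcal M(\Lambda))$, and integrating any bounded continuous $F\colon\mathcal M(\Lambda)\to\mathbb R$ against $\tilde\mu_n$ yields $F(m_p\otimes\lambda_p)$ in the limit.
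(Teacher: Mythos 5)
Your overall architecture --- code $R$ to a countable full shift, induce past the neutral fixed point of $T_1$ to obtain a locally H\"older induced potential with zero pressure, invoke the level-2 LDP for the induced system, and contract back to $\mathcal M(\Lambda)$ --- is exactly the route the paper takes, and your deduction of (b) from (a) via tightness, Prohorov and the singleton zero set is also the paper's. The genuine gap is in your identification of the unique minimizer. The rate function delivered by the LDP is \emph{not} the naive free energy $\mu\mapsto -h_\mu(R)-\int\varphi\,d\mu$ on invariant measures: on a countable-alphabet shift the entropy is not upper semicontinuous, so that function need not be lower semicontinuous and must be replaced by its lower semicontinuous regularization (the paper's $I_\psi$ in \eqref{rate-def}). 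After regularization, a minimizer of the rate function need no longer be an equilibrium state --- the paper explicitly recalls Sarig's example of a shift-invariant Gibbs state $\mu$ with $I_\phi(\mu)=0$ but $\int\phi\,d\mu=-\infty$. Consequently, ``uniqueness of the equilibrium state for the Gibbs--Markov inducing scheme'' does not by itself yield uniqueness of the minimizer, which is precisely what part (b) requires.

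Closing this gap is the content of Proposition~\ref{unique-prop}: under the summability condition $\beta_\infty(\psi)<1$ (computed in the paper to be $1/2$ for the symbolic annealed geometric potential), every minimizer of $I_\psi$ is an equilibrium state; the delicate case of approximating sequences $(\mu_k)$ with $h(\mu_k)\to\infty$ and $\int\psi\,d\mu_k\to-\infty$ is handled via the upper semicontinuity of $\mu\mapsto h(\mu)/(-\int\psi\,d\mu)$ from \cite{Tak20}. Only then does uniqueness of the equilibrium state (itself proved by a separate inducing argument using the weak Gibbs property, Shannon--McMillan--Breiman and the Abramov--Kac formula, rather than the fiberwise Pesin--Rokhlin identity you invoke without justification) transfer to uniqueness of the minimizer, and the last assertion of Proposition~\ref{CP} pushes this down to $I_p$ on $\mathcal M(\Lambda)$. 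A secondary remark: you propose to re-derive the upper and lower LDP bounds and the exponential tightness by hand, whereas the paper obtains all three at once from the black-box Theorem~\ref{LDP-per} of \cite{Tak22} once the twisted induced potentials are shown to be locally H\"older with vanishing pressure; your sketch of those bounds is plausible but would need the distortion and pressure-drop estimates spelled out, which the cited theorem packages for you.
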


 See
$\S$\ref{LDP-s} for the definition of the Large Deviation Principle and that of related terms in the statements of Theorem~\ref{level-2-thm}, including the meaning of level-2.
The statements in the cases $p=0$ and $p=1$
were established in \cite{Tak20} and
 \cite{Tak22} respectively. The main concern of this paper is the case $p\in(0,1)$.

Moving on to a quenched counterpart, for each $\omega\in\Omega$ we define
 a sequence $(\tilde\mu_n^\omega)_{n=1}^\infty$ of Borel probability measures on  $\mathcal M(\Lambda)$ by
  \[\tilde\mu_n^\omega=\frac{1}{Z_{\omega,n}}\sum_{ x\in{\rm Fix}(T^n_\omega) }|(T_{\omega}^n)'x|^{-1}\delta_{V_n^R(\omega,x)}.\]
The measure 
$\int_\Omega\tilde\mu^\omega_n(\cdot)dm_p(\omega)$ on $\mathcal M(\Lambda)$ equals $\tilde\mu_n(\cdot)$ up to subexponential factors (see Lemma~\ref{lem-cal1}). 

\begin{thm}[quenched level-2 large deviations]\label{ldpup-q}
Let $p\in(0,1)$. The following statements hold:
\begin{itemize}
\item[(a)] for $m_p$-almost every $\omega\in\Omega$,
$(\tilde\mu_n^\omega)_{n=1}^\infty$ is exponentially tight, and
 for any closed subset $\mathcal C$ of $\mathcal M(\Lambda)$,
 \[\limsup_{n\to\infty}\frac{1}{n}\log \tilde\mu_{n}^\omega(\mathcal C)\leq -\inf_{\mathcal C} I_p;\]
\item[(b)] for $m_p$-almost every $\omega\in\Omega$ and any bounded continuous function $F\colon \mathcal M(\Lambda)\to\mathbb R$, 
\[\lim_{n\to\infty}\frac{1}{Z_{\omega,n}}\sum_{ x\in{\rm Fix}(T^n_\omega) }|(T_{\omega}^n)'x|^{-1}F(V_n^R(\omega,x))=F(m_p\otimes\lambda_p).\]
\end{itemize}
\end{thm}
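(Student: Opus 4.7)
The plan is to derive Theorem~\ref{ldpup-q}(a) from the annealed LDP in Theorem~\ref{level-2-thm}(a) via Markov's inequality and Borel--Cantelli, and then to obtain (b) from (a) using the uniqueness of the minimizer of $I_p$. The key input is Lemma~\ref{lem-cal1}, which identifies $\int_\Omega \tilde\mu_n^\omega(\cdot)\,dm_p(\omega)$ with $\tilde\mu_n(\cdot)$ up to a subexponential factor. Combining this with the annealed upper bound yields $\int_\Omega \tilde\mu_n^\omega(\mathcal C)\,dm_p(\omega) \leq \exp(-n\inf_{\mathcal C}I_p + o(n))$ for any closed $\mathcal C \subset \mathcal M(\Lambda)$, and Markov's inequality then gives $m_p\{\omega : \tilde\mu_n^\omega(\mathcal C) > e^{-n(\inf_{\mathcal C}I_p - \varepsilon)}\} \leq e^{-n\varepsilon/2}$ for $n$ large, so Borel--Cantelli yields the quenched upper bound for the fixed $\mathcal C$ on a full-measure set.

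The delicate step is to produce a single full-measure event $\Omega_0 \subset \Omega$ on which the upper bound holds simultaneously for every closed $\mathcal C$. First I would transfer exponential tightness: using compact sets $\mathcal K_L \subset \mathcal M(\Lambda)$ from annealed exponential tightness with $\limsup_n \tfrac{1}{n}\log \tilde\mu_n(\mathcal K_L^c) \leq -2L$, the same Markov/Borel--Cantelli argument applied to each $L \in \mathbb N$ furnishes quenched exponential tightness along $(\mathcal K_L)_{L \in \mathbb N}$ almost surely. Next, since $\mathcal M(\Lambda)$ is Polish, fix a countable base $\{\mathcal V_k\}_{k \in \mathbb N}$ of its topology and apply Markov/Borel--Cantelli to each pair $(\mathcal V_k, q)$ with $q \in \mathbb Q \cap (0, \inf_{\mathcal V_k} I_p)$; intersecting these countably many full-measure events produces $\Omega_0$. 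On $\Omega_0$, given any closed $\mathcal C$ and $\alpha < \inf_{\mathcal C} I_p$, lower semicontinuity of $I_p$ combined with exponential tightness lets me cover the compact set $\mathcal C \cap \mathcal K_L$ by finitely many $\mathcal V_{k_j}$ with $\inf_{\mathcal V_{k_j}} I_p > \alpha - \varepsilon$, and a union bound completes the argument.

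Part (b) follows from (a) and the fact that $m_p \otimes \lambda_p$ is the unique zero of the good rate function $I_p$: for any open neighborhood $\mathcal U$ of $m_p \otimes \lambda_p$, the closed set $\mathcal U^c$ satisfies $\inf_{\mathcal U^c} I_p > 0$, so (a) forces $\tilde\mu_n^\omega(\mathcal U^c) \to 0$ exponentially on $\Omega_0$. Consequently $\tilde\mu_n^\omega \to \delta_{m_p \otimes \lambda_p}$ in the weak$^*$ topology on $\mathcal M(\mathcal M(\Lambda))$, and integrating against any bounded continuous $F$ gives the convergence claimed. I expect the principal obstacle to be the countable-parameter argument that upgrades ``almost surely for each $\mathcal C$'' to ``almost surely for all $\mathcal C$'': it requires quenched exponential tightness (itself obtained by the same Borel--Cantelli scheme), lower semicontinuity of $I_p$, and a careful choice of countable base and rational thresholds to ensure that a finite cover approximates $\inf_{\mathcal C} I_p$ uniformly well as $\mathcal C$ varies. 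A secondary technical point is verifying that the comparison in Lemma~\ref{lem-cal1} survives testing against arbitrary bounded continuous functionals on $\mathcal M(\Lambda)$, where $V_n^R(\omega,x)$ must be matched against its periodized analogue $V_n^R(\bar\omega,x)$ that governs the annealed sum.
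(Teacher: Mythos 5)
Your proposal is correct and follows essentially the same route as the paper: the comparison of $\int_\Omega\tilde\mu_n^\omega\,dm_p$ with $\tilde\mu_n$ up to $\exp(O(\sqrt{n}))$ (Lemma~\ref{lem-cal1}), Markov's inequality plus Borel--Cantelli for each member of a countable family of test sets and for the tightness sets $\mathcal K_L$, a finite-cover argument on the compact sets $\mathcal C\cap\mathcal K_L$ using lower semicontinuity of $I_p$, and finally the uniqueness of the minimizer for part (b). The only deviation is that you test against a countable open base rather than the paper's closed balls $B(\mu,1/L)$ about a countable dense set; since the annealed upper bound is stated for closed sets, you must choose basic neighborhoods whose \emph{closures} still satisfy $\inf I_p>\alpha-\varepsilon$ (possible by regularity and lower semicontinuity), a fixable detail that does not affect the argument.
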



The rest of this paper consists of three sections.
In \S2 we prove Theorem~\ref{thm-a} and Theorem~\ref{thm-b} subject to Theorem~\ref{level-2-thm} and Theorem~\ref{ldpup-q}. These deductions are rather straightforward. In \S3
we start an analysis of the Gauss-R\'enyi map $R$,
and prove Theorem~\ref{ldpup-q} subject to 
Theorem~\ref{level-2-thm}. In \S4 we prove Theorem~\ref{level-2-thm}.

A more precise logical structure is indicated in the diagram below.
In \S\ref{pf-thmb} we show 
Theorem~\ref{level-2-thm}(b) $\Longrightarrow$ Theorem~\ref{thm-b}.
In \S\ref{pf-thma} we show 
Theorem~\ref{ldpup-q}(b) $\Longrightarrow$ Theorem~\ref{thm-a}.
 In \S\ref{pf-sample} we show
Theorem~\ref{level-2-thm}(a) $\Longrightarrow$ Theorem~\ref{ldpup-q}(a) $\Longrightarrow$ Theorem~\ref{ldpup-q}(b).



\[  { \begin{CD}
     {\bf Theorem~\ref{level-2-thm}(a)} @>{\S\ref{pf-sample}}>> {\bf Theorem~\ref{ldpup-q}(a)} \\
  @V{\S\ref{section4-1}}VV    @VV{\S\ref{pf-sample}}V \\
     {\bf Theorem~\ref{level-2-thm}(b) }  @.  
     {\bf Theorem~\ref{ldpup-q}(b)} \\
     @V{\S\ref{pf-thmb}}VV    @VV{\S\ref{pf-thma}}V \\
     {\bf Theorem~\ref{thm-b}}  @.  {\bf Theorem~\ref{thm-a} }\\
  \end{CD}}\]
  \\





        Most of our effort is dedicated to the proof of Theorem~\ref{level-2-thm}(a).
          The random dynamical system we consider falls into the class of {\it mean expanding systems} that are comprehensively investigated in \cite{ANV15}. Moreover,  
           the restriction of the Perron-Frobenius operator associated with the Gauss-R\'enyi map $R$ to an  appropriate function space has a spectral gap \cite{KKV17, KMTV22}. 
           This property can be used to apply the general results in \cite{ANV15} to deduce nice statistical properties of the dynamical system $(\Lambda,R,m_p\otimes\lambda_p)$, see \cite{KKV17} for details.
Meanwhile, 
it is not known whether the existence of spectral gap implies the LDP. 
To prove Theorem~\ref{level-2-thm}(a),
our strategy
is to code the Gauss-R\'enyi map 
into 
the countable full shift, 
 establish the LDP there, 
and then transfer this LDP back to the original system. 

Owing to the existence of the neutral fixed point of the R\'enyi map $T_1$,
for the potential function associated with this countable full shift there exists no Gibbs state. 
To resolve this difficulty, we construct an appropriate induced system that is topologically conjugate to another countable full shift, 
and then apply the result of the second-named author in \cite{Tak22}. This requires verifying the regularity of the associated induced potential.
The use of induced systems for an analysis of random dynamical systems with infinitely many branches can be found, for example, in \cite{DaOo17}.

The uniqueness of minimizer in Theorem~\ref{level-2-thm}(a) is important to ensure the convergence in Theorem~\ref{level-2-thm}(b). To establish this uniqueness, we first show the uniqueness of equilibrium state (see Proposition~\ref{unique-equi}), and then show that any minimizer is an equilibrium state. The first step relies on implementing the thermodynamic formalism for countable Markov shifts (see e.g., \cite{MauUrb03,Sar99}) with the induced system. Except for the construction of induced system and the verification of regularity of induced potential, the argument follows well-known lines (see e.g., \cite{MauUrb03,PesSen08}). In the second step we appeal to the result of the second named author \cite{Tak20}. 


\section{Deduction of 
convergences on random cycles}
As a warm up, 
in $\S$\ref{random-s} we begin by
describing an induction algorithm that generates random continued fractions. 
In $\S$\ref{LDP-s} we summarize basic facts on large deviations.
We show Theorem~\ref{level-2-thm}(b) $\Longrightarrow$ Theorem~\ref{thm-b} and 
Theorem~\ref{ldpup-q}(b) $\Longrightarrow$ Theorem~\ref{thm-a}, respectively in \S\ref{pf-thmb} and \S\ref{pf-thma}.
Those readers who would like to immediately access the proofs of Theorems~\ref{thm-a} and \ref{thm-b}  
can pass \S\ref{random-s}, \S\ref{LDP-s} and directly go to \S\ref{pf-thmb} and \S\ref{pf-thma}.
\medskip

\noindent{\it Notation.} For a bounded interval $J$, let $|J|$ denote its Euclidean length. 

\subsection{A continued fraction algorithm by the Gauss-R\'enyi map}\label{random-s}

Using the Gauss-R\'enyi map, we describe an induction algorithm generating random continued fractions.
 Define a function $C\colon (\Omega\times[0,1])\setminus E\to\mathbb N$ by 
\[C(\omega,x)=\left\lfloor \frac{1}{(-1)^{\omega_1}x+\omega_1}\right\rfloor.\]
For $(\omega,x)\in(\Omega\times [0,1])\setminus E$ and $n\in\mathbb N$, let
  \[C_n(\omega,x)=C(R^{n-1}(\omega,x))+\omega_{n+1},\]
  when $R^{n-1}(\omega,x)$ is defined.

 For any $(\omega,x)\in(\Omega\times[0,1])\setminus E$ we have
\[
x=\omega_1+\frac{(-1)^{\omega_1}}{C(\omega,x)+
T_{\omega_1}x}.\]
If  $R(\omega,x)\notin E$, then
replacing $(\omega,x)$ in \eqref{first} by $R(\omega,x)$ we have
\[
T_{\omega_1}x=\omega_2+\frac{(-1)^{\omega_2}}{C(R(\omega,x))+
T_{\omega}^2x}.\]
Substituting this into the right-hand side of the previous equality yields 
\[
x = \omega_1+\confrac{(-1)^{\omega_1}}{C(\omega,x)+\omega_2}  + \confrac{(-1)^{\omega_{2}}}
{C(R(\omega,x))+ T_\omega^{2}x}.
\]
If $n\geq2$ and
$R^i(\omega,x)\notin E$ for $i=0,\ldots,n-1$,
then repeating the above process yields 
\[
x = \omega_1+\confrac{(-1)^{\omega_1}}{C_1(\omega,x)}  + \cdots +\confrac{(-1)^{\omega_{n-1}}}{C_{n-1}(\omega,x)}+ 
\confrac{(-1)^{\omega_{n}}}
{C_{n}(\omega,x)-\omega_{n+1}+ T_\omega^{n}x},
\]
where $(-1)^{\omega_{i+1}}+C_{i}(\omega,x)\geq1$
  for $i=1,\ldots,n$. 
  
  For many $(\omega,x)$,
  this algorithm produces a continued fraction expansion of $x$ summarized as follows.

 \begin{prop}\label{random-lem}Let $(\omega,x)\in(\Omega\times[0,1])\setminus E$.
  \begin{itemize}
\item[(a)] If $x\in\Lambda_\omega$, then $(-1)^{\omega_{n+1}}+C_{n}(\omega,x)\geq1$ for every $n\in\mathbb N$, and the continued fraction
  \[ \omega_1+\confrac{(-1)^{\omega_1}}{C_1(\omega,x)}  +\confrac{(-1)^{\omega_{2}}}{C_{2}(\omega,x)}+\confrac{(-1)^{\omega_3}}{C_3(\omega,x)}+\cdots
\] converges to $x$. 
\item[(b)] If $x\in\Lambda_\omega$, then $x\notin\mathbb Q$ 
if and only if $(-1)^{\omega_{n+1}}+C_{n}(\omega,x)\geq2$ for infinitely many $n\in\mathbb N$.
\item[(c)] If $x\notin\Lambda_\omega$ then $x\in\mathbb Q$.
\end{itemize}
 \end{prop}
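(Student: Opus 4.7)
The plan is to iterate the one-step identity
\[x=\omega_1+\frac{(-1)^{\omega_1}}{C(\omega,x)+T_{\omega_1}x},\]
(which holds by the definition of $C$ on $(\Omega\times[0,1])\setminus E$) into the exact M\"obius representation $x=\Psi_n(T_\omega^n x)$, where
\[\Psi_n(y)=\omega_1+\confrac{(-1)^{\omega_1}}{C_1(\omega,x)}+\cdots+\confrac{(-1)^{\omega_{n-1}}}{C_{n-1}(\omega,x)}+\confrac{(-1)^{\omega_n}}{C_n(\omega,x)-\omega_{n+1}+y}.\]
Evaluating at $y=\omega_{n+1}$ collapses the last entry to $C_n$, so $\Psi_n(\omega_{n+1})$ is precisely the $n$-th truncation of the displayed continued fraction. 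Since both $T_\omega^n x$ and $\omega_{n+1}$ lie in $[0,1]$ and $\Psi_n$ is monotone on $[0,1]$, the convergence in (a) is reduced to showing $(-1)^{\omega_{n+1}}+C_n\geq 1$ together with $|\Psi_n([0,1])|\to 0$.

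For part (a), the inequality is immediate from $C_n=C(R^{n-1}(\omega,x))+\omega_{n+1}$ and $C(R^{n-1}(\omega,x))\geq 1$. For the shrinkage, the single-step maps $z\mapsto\omega_k+(-1)^{\omega_k}/(C_k+z)$ have unimodular matrix, so their composition satisfies $\Psi_n(y)=(ay+b)/(cy+d)$ with $|ad-bc|=1$, whence $|\Psi_n(1)-\Psi_n(0)|=1/|d(c+d)|$. I plan to verify by induction on $n$ (tracking signs carefully to keep $c,d>0$) that $c,d$ are positive integers with $d$ growing strictly at each position where $(-1)^{\omega_{k+1}}+C_k\geq 2$; combined with (b), this handles the irrational case. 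For the rational case, (b) will produce $N$ with $T_\omega^N x=0$, so $x=\Psi_N(0)$; the tail of $\Psi_n(\omega_{n+1})$ beyond position $N$ then equals the repeating block $\confrac{-1}{2}+\confrac{-1}{2}+\cdots$, whose $k$-fold iteration computes directly to $-k/(k+1)\to -1$, and substituting back into $\Psi_N$ yields $\Psi_n(\omega_{n+1})\to\Psi_N(0)=x$.

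For part (b), the key equivalence is that $(-1)^{\omega_{n+1}}+C_n=1$ holds if and only if $\omega_{n+1}=1$ and $C(R^{n-1}(\omega,x))=1$, the latter amounting (when $\omega_n=1$) to $T_\omega^{n-1}x\in[0,1/2)$. If this equality holds for every $n\geq N$, then $\omega$ is eventually $1^\infty$ and the orbit of $T_\omega^N x$ under $T_1$ is confined to $[0,1/2)$; but the explicit recurrence $T_1^k y_0=y_0/(1-ky_0)$ forces $y_0=0$, so $T_\omega^N x=0$ and hence $x\in\mathbb Q$. For the converse, when $x\in\mathbb Q\cap\Lambda_\omega$, a one-line computation shows that both $T_0$ and $T_1$ strictly decrease the reduced denominator of a nonzero rational, so the orbit must reach $0$ in finitely many steps; to remain in $\Lambda_\omega$ thereafter, $\omega$ must be constantly $1$, and a direct calculation gives $(-1)^{\omega_{n+1}}+C_n=1$ for all sufficiently large $n$.

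Part (c) follows from a brief rationality-preservation argument: if $x\notin\Lambda_\omega$, take the smallest $n\geq 0$ with $R^n(\omega,x)\in E$, so $T_\omega^n x\in\{0,1\}\subset\mathbb Q$; on the cylinder containing $x$, $T_\omega^n$ is a composition of single-branch restrictions of $T_0,T_1$, each a M\"obius map with integer coefficients, hence a bijection preserving both $\mathbb Q$ and $\mathbb R\setminus\mathbb Q$, so $x\in\mathbb Q$. I expect the main obstacle to lie in the shrinkage estimate $|\Psi_n([0,1])|\to 0$ for part (a): the neutral fixed point of $T_1$ rules out uniform exponential contraction, so the argument has to weave together the dichotomy from (b) and the telescoping of the constant-$1$ tail, all while maintaining positivity of the M\"obius denominator throughout the induction.
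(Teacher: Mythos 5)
Your architecture is sound and genuinely different from the paper's: you prove convergence and identify the limit in one stroke by showing the nested M\"obius images $\Psi_n([0,1])$ (which are exactly the cylinders $J_n(\omega,x)$) shrink to a point, via continuant/determinant arithmetic, whereas the paper outsources convergence of the abstract continued fraction to \cite[Lemma~2.1(a)]{NT24} (Lemma~\ref{IFS-new}) and then identifies the limit with $x$ dynamically, using the two-step expansion bound $|(T_{\omega_{n+1}}\circ T_{\omega_n})'|\geq 9/4$ in the non-degenerate cases plus the mean value theorem. Your treatments of (b) and (c) are correct and, if anything, more self-contained than the paper's (the paper gets the irrationality criterion in (b) from Lemma~\ref{IFS-new}; your denominator-descent and neutral-fixed-point arguments are fine, and your (c) coincides with the paper's). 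The explicit tail computation $-k/(k+1)\to-1$ for the rational case of (a) is also correct, since $T_\omega^Nx=0$ forces $\omega_n=1$ and $C_n=2$ for all large $n$.

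The genuine gap is the inductive invariant you propose for the shrinkage estimate: it is false that the denominator grows (or even is non-decreasing) at each position where $(-1)^{\omega_{k+1}}+C_k\geq 2$. Concretely, take $\omega_1=0$, $\omega_2=1$, $\omega_3=0$ and $x\in(2/3,1)$, so that $C(\omega,x)=1$, $C_1=2$, $C(R(\omega,x))=1$, $C_2=1$; position $k=2$ satisfies $(-1)^{\omega_3}+C_2=2$, yet the convergent denominators are $q_1=C_1=2$ and $q_2=C_2q_1+(-1)^{\omega_2}q_0=2-1=1$, a strict decrease across a ``good'' position; likewise $\Psi_1(y)=1/(1+y)$ and $\Psi_2(y)=(y+1)/(2y+1)$ have $d_1=d_2=1$. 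What actually decreases is the interval length $1/|d(c+d)|$ (here $1/2\to 1/3$), but establishing that this tends to $0$ requires the more delicate bookkeeping of Tietze's convergence criterion (tracking a combination such as $q_n+\epsilon_{n+1}q_{n-1}$ rather than $q_n$ alone) — precisely the content of Lemma~\ref{IFS-new}. So as written the induction would break at the first occurrence of a block $\omega_{k}=1,\ \omega_{k+1}=0$ with $C(R^{k-1}(\omega,x))=1$. The cleanest repairs are either to invoke \cite[Lemma~2.1(a)]{NT24} for the convergence (as the paper does) and keep your nesting argument only for identifying the limit, or to replace the arithmetic estimate by the dynamical one: in each of the cases $\omega_n\omega_{n+1}\in\{00,10\}$ and $\omega_n\omega_{n+1}=01$ with $C(R^{n-1}(\omega,x))\geq2$, the two-step derivative is at least $9/4$, so if such transitions occur infinitely often then $|J_n(\omega,x)|\to0$ by the mean value theorem, and otherwise one is in your rational/neutral-fixed-point case.
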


To prove (a) and (b) we use the next lemma. For related results, see \cite{Kra91,Per50,Tie11}.

\begin{lemma}[{
\cite[Lemma~2.1(a)]{NT24}}]\label{IFS-new} 
 Let $\omega\in\Omega$ and
 $(C_n)_{n\in\mathbb N}\in \mathbb N^{\mathbb N}$ satisfy $(-1)^{\omega_{n+1}}+C_{n}\geq1$ for every $n\in\mathbb N$.
Then the continued fraction
\[\omega_1+
\confrac{(-1)^{\omega_1} }{C_{1}} + \confrac{(-1)^{\omega_2} }{C_{2}}  +\confrac{(-1)^{\omega_3} }{C_{3}}+\cdots
\]
converges to a number in $[0,1]$. This number is irrational 
if and only if $(-1)^{\omega_{n+1}}+C_{n}\geq2$ for infinitely many $n\in\mathbb N$.
\end{lemma}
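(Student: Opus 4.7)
The plan is to recast the finite truncations as compositions of M\"obius transformations and to control convergence and irrationality through the classical theory of convergents. I would define the convergents $P_n/Q_n$ via $P_n=C_nP_{n-1}+(-1)^{\omega_n}P_{n-2}$ and $Q_n=C_nQ_{n-1}+(-1)^{\omega_n}Q_{n-2}$ with $(P_{-1},Q_{-1})=(1,0)$ and $(P_0,Q_0)=(\omega_1,1)$, and derive the determinant identity $|P_nQ_{n-1}-P_{n-1}Q_n|=1$ for all $n\geq 0$ by induction on the associated $2\times 2$ matrix product; consecutive convergents then differ by $1/(Q_nQ_{n-1})$.

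For convergence to a value in $[0,1]$, I would unpack the hypothesis $(-1)^{\omega_{n+1}}+C_n\geq 1$ as exactly the condition that the tail map $f_n(y)=C_n+(-1)^{\omega_{n+1}}/y$ sends $[1,+\infty]$ into itself, since $f_n(1)=C_n+(-1)^{\omega_{n+1}}\geq 1$ and $f_n(+\infty)=C_n\geq 1$. Composing $f_1\circ\cdots\circ f_n$ yields nested closed intervals in $[1,+\infty]$, and an induction on the recursion---using the additional consequence that $\omega_n=1$ forces $C_{n-1}\geq 2$---shows $Q_n\to\infty$. Thus $|P_n/Q_n-P_{n-1}/Q_{n-1}|=1/(Q_nQ_{n-1})\to 0$ makes the convergents Cauchy, and the outermost transformation $y\mapsto \omega_1+(-1)^{\omega_1}/y$ sends $[1,+\infty]$ into $[0,1]$, placing the limit in $[0,1]$.

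For the irrationality equivalence I would use the classical representation $x=(P_nt_n+\epsilon_nP_{n-1})/(Q_nt_n+\epsilon_nQ_{n-1})$, where $t_n\geq 1$ is the continued-fraction tail from level $n+1$ and $\epsilon_n\in\{\pm 1\}$, yielding $|x-P_n/Q_n|=1/(Q_n|Q_nt_n+\epsilon_nQ_{n-1}|)$. The easy direction is a direct computation: if strict inequality fails beyond some $N$, then necessarily $\omega_{n+1}=1$ and $C_n=2$ for every $n\geq N$, the tail satisfies the fixed-point equation $t=2-1/t$ with unique positive solution $t=1$, and $x$ reduces to a rational combination of the first $N$ convergents. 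For the forward direction, assuming strict inequality for infinitely many $n$, I would show that $t_n$ is bounded away from $1$ along the associated subsequence, so $|Q_nt_n+\epsilon_nQ_{n-1}|$ grows like a positive multiple of $Q_n$; this gives $|x-P_n/Q_n|<1/(qQ_n)$ for any rational target $p/q$ and sufficiently large $n$, whereupon $pQ_n-qP_n$ is an integer of absolute value less than $1$ and must vanish, contradicting the determinant identity. The main obstacle I anticipate is the quantitative bound on $t_n$ along the good subsequence: in long ``bad'' stretches the tail drifts toward the neutral fixed point $1$ of $y\mapsto 2-1/y$, so one must identify how far a single strict-inequality index pushes $t_n$ away from $1$ and verify that this gain survives subsequent bad steps.
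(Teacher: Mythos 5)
First, note that the paper does not prove this lemma at all: it is imported verbatim from \cite[Lemma~2.1(a)]{NT24}, so there is no internal proof to compare against. Your architecture (convergents $P_n,Q_n$, the determinant identity, nested images of $[1,\infty]$, rational approximation for irrationality) is the classical Perron--Tietze route and is the right one. Two steps, however, do not work as you have stated them.

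The first is the assertion that ``an induction on the recursion \dots shows $Q_n\to\infty$.'' The natural induction hypothesis $Q_{n-1}\le Q_n$ is false here: with $\omega_1=0$, $C_1=2$, $\omega_2=1$, $C_2=1$, $\omega_3=0$ (all admissible, since $(-1)^{\omega_2}+C_1=1$ and $(-1)^{\omega_3}+C_2=2$) one gets $Q_1=2$ and $Q_2=C_2Q_1-Q_0=1$; iterating such blocks shows that $Q_{n-1}/Q_n$ is even unbounded. The quantity that is actually monotone is $R_n:=Q_n+(-1)^{\omega_{n+1}}Q_{n-1}$: one checks $R_0=1$, $R_{n+1}-R_n=\bigl((-1)^{\omega_{n+2}}+C_{n+1}-1\bigr)Q_n\ge0$, and $Q_{n+1}=(C_{n+1}-1)Q_n+R_n\ge R_n\ge1$. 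From this, $Q_n\to\infty$ (and $Q_nQ_{n-1}\to\infty$, which is what Cauchy-ness needs) follows in both regimes, since when $R_n$ is eventually constant the tail is the all-$(-1/2)$ block and $Q_n-Q_{n-1}$ is a positive constant.

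The second is your flagged ``main obstacle,'' which is both easier and harder than you think. Easier: no propagation through bad stretches is needed, because at an index where $(-1)^{\omega_{n+1}}+C_n\ge2$ the complete quotient satisfies $s_n=C_n+(-1)^{\omega_{n+1}}/s_{n+1}\ge C_n+(-1)^{\omega_{n+1}}-1+\1_{\{\omega_{n+1}=0\}}\cdot(\text{something positive})$, and in the only nontrivial case $\omega_{n+1}=1$, $C_n\ge3$ one gets $s_n\ge2$ outright. Harder: your target estimate $|Q_nt_n+\epsilon_nQ_{n-1}|\ge cQ_n$ cannot be reached from ``$t_n$ bounded away from $1$'' alone, precisely because $Q_{n-1}/Q_n$ is unbounded. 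The estimate you actually need is $|Q_nt_n+\epsilon_nQ_{n-1}|=(t_n-1)Q_n+R_n\ge R_n$, and $R_n\to\infty$ exactly when the strict inequality holds infinitely often; then a rational value $p/q$ would force $q\ge R_n$ for all $n$, a contradiction. With these two repairs (and noting $x\ne P_n/Q_n$ because the error has absolute value $1/(Q_n|Q_nt_n+\epsilon_nQ_{n-1}|)>0$), your proof closes.
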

 
 \begin{proof}[Proof of Proposition~\ref{random-lem}]
 Let $x\in\Lambda_\omega$.  
Applying the algorithm to $(\omega,x)$ we get \begin{equation}\label{first}
x=\omega_1+\frac{(-1)^{\omega_1}}{C(\omega,x)+
T_{\omega_1}x},\end{equation}
and for every $n\geq2$,
\begin{equation}\label{CF-n}
x = \omega_1+\confrac{(-1)^{\omega_1}}{C_1(\omega,x)}  + \cdots +\confrac{(-1)^{\omega_{n-1}}}{C_{n-1}(\omega,x)}+ 
\confrac{(-1)^{\omega_{n}}}
{C_{n}(\omega,x)-\omega_{n+1}+ T_\omega^{n}x},
\end{equation}
where $(-1)^{\omega_{i+1}}+C_{i}(\omega,x)\geq1$
  for $i=1,\ldots,n$.
By Lemma~\ref{IFS-new}, 
the continued fraction \[\omega_1+\confrac{(-1)^{\omega_1}}{C_1(\omega,x)}  +\confrac{(-1)^{\omega_{2}}}{C_{2}(\omega,x)}+\confrac{(-1)^{\omega_{3}}}{C_{3}(\omega,x)}+\cdots\] converges to a number $y\in[0,1]$. Moreover, $y\notin\mathbb Q$
if and only if $(-1)^{\omega_{n+1}}+C_{n}(\omega,x)\geq2$ for infinitely many $n\in\mathbb N$. Hence,
for (a) and (b) it suffices to show $x=y$.

For each $n\in\mathbb N$, let $J_n(\omega,x)$ denote the maximal subinterval of $[0,1]$ containing $x$ on which $T_\omega^n$ is monotone. From \eqref{CF-n} we have $y\in J_n(\omega,x)$ for every $n\in\mathbb N$.
Since
$(-1)^{\omega_{n+1}}+C_{n}(\omega,x)\geq1$,
there are four cases: 

\begin{itemize}\item[(i)] $\omega_n=\omega_{n+1}=0$; \item[(ii)] $\omega_n=1$ and $\omega_{n+1}=0$; \item[(iii)] $\omega_n=0$, $C(R^{n-1}(\omega,x))\geq2$ and $\omega_{n+1}=1$; \item[(iv)] $\omega_n=\omega_{n+1}=1$.
\end{itemize}
We estimate the derivatives of the composition
using the definitions of $T_0$ and $T_1$, $\inf_{(0,1]} |T'_0|\geq1$ and
$\inf_{[0,1)} |T'_1|\geq1$,
the monotonicity of $|T_0|$ on $(0,1]$ and that of   $|T_1'|$ on $[0,1)$. In case (i), for all $y\in T_\omega^{n-1}J_n(\omega,x)$ we have 
\[|(T_{\omega_{n+1}}\circ T_{\omega_n})'y|\geq 
\left|T_0'\left(\frac{2}{3}\right)\right|=\frac{9}{4}.\]
In case (ii),  for all $y\in T_\omega^{n-1}J_n(\omega,x)$  we have 
\[|(T_{\omega_{n+1}}\circ T_{\omega_n})'y|\geq 
\left|T_1'\left(\frac{1}{3}\right)\right|=\frac{9}{4}.\]
In case (iii), for all $y\in T_\omega^{n-1}J_n(\omega,x)$ we have 
\[|(T_{\omega_{n+1}}\circ T_{\omega_n})'y|\geq 
\left|T_0'\left(\frac{1}{2}\right)\right|>\frac{9}{4}.\]
Hence, if one of (i) (ii) (iii) occurs infinitely many times then 
$\inf_{J_n(\omega,x)}|(T_\omega^n)'|\to\infty$ as $n\to\infty$.
By the mean value theorem, for every $n\in\mathbb N$
there exists $\xi_n\in J_n(\omega,x)$ such that
\[|x-y|=\frac{|T_\omega^nx-T_\omega^ny|}{|(T_\omega^n)'\xi_n|}\leq\frac{1}{|(T_\omega^n)'\xi_n|}.\]
Letting $n\to\infty$ we obtain $x=y$.

If all (i) (ii) (iii) occur only finitely many times, then there is $k\in\mathbb N$ such that $\omega_n=1$ for every $n> k$.
Suppose $T_\omega^kx\notin\mathbb Q$. Then $T_1^n(T_\omega^kx)\neq0$ holds for every $n\in\mathbb N$.
Then the formula for $T_1$ implies
$\inf_{J_{n-k}(1^\infty,T_\omega^kx)}|(T_1^{n-k})'|\to\infty$ as $n\to\infty$.
For every $n\in\mathbb N$
there exists $\zeta_n\in J_{n-k}(1^\infty,T_\omega^kx)$ such that
\[|T_\omega^kx-T_\omega^ky|=\frac{|T_\omega^nx-T_\omega^ny|}{|(T_1^{n-k})'\zeta_n|}\leq\frac{1}{|(T_1^{n-k})'\zeta_n|}.\]
Letting $n\to\infty$ we obtain
$T_\omega^kx=T_\omega^ky$. Since the restriction of $T_\omega^k$ to $J_k(\omega,x)$ is injective, we obtain $x=y$.
Suppose $T_\omega^kx\in\mathbb Q$.
Since $T_1$ maps all rational points to $0$,
 there exists $n\in\mathbb N$ such that $T_1^n(T_\omega^kx)=0$. Since the neutral fixed point $0$ of $T_1$ is topologically repelling, it follows that $T_1^n(T_\omega^ky)=0$. The restriction of $T_\omega^{k+n}$ to $J_{k+n}(\omega,x)$ is injective, 
 and hence $x=y$.
We have verified (a) and (b).

If $x\in(0,1)\setminus\Lambda_\omega$ then there exists $n\in\mathbb N$ such that $T^{n}_\omega x$ is defined and $T^{n+1}_\omega x$ is not defined. 
Then $T^{n}_\omega x\in \{0,1\}$ holds and \eqref{first}, \eqref{CF-n} together imply $x\in\mathbb Q$, verifying (c). The proof of Proposition~\ref{random-lem} is complete. \end{proof}

\subsection{Large Deviation Principle}\label{LDP-s}
Our main reference on large deviations is \cite{DemZei98}. Let $\mathcal X$ be a topological space and let $(\mu_n)_{n=1}^\infty$ be a sequence of Borel probability measures on $\mathcal X$.
We say the {\it Large Deviation Principle} (LDP) holds for 
 $(\mu_n)_{n=1}^\infty$  
  if there exists a lower semicontinuous function $I\colon\mathcal X\to[0,\infty]$ such that:
  \begin{itemize}
\item[(a)] for any open subset 
$\mathcal G$ of $\mathcal X$,
 \[\liminf_{n\to\infty}\frac{1}{n}\log \mu_n(\mathcal G)\geq -\inf_{\mathcal G} I;\]
\item[(b)]  for any closed subset $\mathcal C$ of $\mathcal X$,
\[\limsup_{n\to\infty}\frac{1}{n}\log \mu_n(\mathcal C)\leq-\inf_{\mathcal C}I.\]
\end{itemize}
We say $x\in \mathcal X$ is a {\it minimizer} if $I(x)=0$ holds. 
 The LDP roughly means that in the limit $n\to\infty$ the measure $\mu_n$ assigns
      all but exponentially small mass 
      to the set $\{x\in\mathcal X\colon I(x)=0\}$
      of minimizers.
      The function $I$ is called a {\it rate function}.
If $\mathcal X$ is a metric space and $(\mu_n)_{n=1}^\infty$ satisfies the LDP, the rate function is unique. 
We say the rate function $I$ is {\it good} if the set $\{x\in\mathcal X\colon I(x)\leq c\}$ is compact for any $c>0$.

We say $(\mu_n)_{n=1}^\infty$ is {\it exponentially tight} if for any $L>0$ there exists a compact subset $\mathcal K$ of $\mathcal X$ such that
\[\limsup_{n\to\infty}\frac{1}{n}\log \mu_n(\mathcal X \setminus \mathcal K)\leq-L.\]
If $(\mu_n)_{n=1}^\infty$ is exponentially tight then it is tight, i.e., for any $\varepsilon>0$ there exists a compact subset $\mathcal K'$ of $\mathcal X$ such that $\mu_n(\mathcal K')>1-\varepsilon$ for all sufficiently large $n$.

 \begin{prop}\label{CP}Let $\mathcal X$, $\mathcal Y$ be Hausdorff spaces and let $(\mu_n)_{n=1}^\infty$ be a sequence of Borel probability measures on $\mathcal X$ for which the LDP holds with a good rate function $I$. Let $f\colon \mathcal X\to \mathcal Y$ be a continuous map. Then the LDP holds for $(\mu_n\circ f^{-1})_{n=1}^\infty$ with a good rate function $J\colon \mathcal Y\to [0,\infty]$ given by  \[J(y)=\inf\{I(x)\colon x\in \mathcal X,\ f(x)=y\}.\]
Moreover, 
if $y_0\in \mathcal Y$ is a mininizer of $J$, then there is a minimizer $x_0\in\mathcal X$ of $I$ such that $y_0=f(x_0)$. \end{prop}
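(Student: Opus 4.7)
The plan is to carry out the standard proof of the contraction principle (as in Theorem~4.2.1 of \cite{DemZei98}) and then extract the minimizer attainment claim from an elementary compactness argument. The proof splits into three parts: verifying that $J$ is a good rate function, transferring the LDP bounds from $\mathcal X$ to $\mathcal Y$, and producing a preimage minimizer.

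For the first part, the key identity is
\[\{y \in \mathcal Y : J(y) \leq c\} = f(\{x \in \mathcal X : I(x) \leq c\})\qquad\text{for every }c \geq 0.\]
The right-hand side is the continuous image of a compact set, hence compact, hence closed in the Hausdorff space $\mathcal Y$. This shows simultaneously that $J$ has compact sublevel sets (goodness) and that they are closed (lower semicontinuity).

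For the second part, the definition of $J$ immediately gives $\inf_{f^{-1}(B)} I = \inf_{B} J$ for every $B \subset \mathcal Y$. If $\mathcal G \subset \mathcal Y$ is open then $f^{-1}(\mathcal G)$ is open, and applying the LDP lower bound for $(\mu_n)_{n=1}^\infty$ to $f^{-1}(\mathcal G)$ yields
\[\liminf_{n\to\infty} \frac{1}{n} \log (\mu_n \circ f^{-1})(\mathcal G) \geq -\inf_{f^{-1}(\mathcal G)} I = -\inf_{\mathcal G} J.\]
The upper bound for closed $\mathcal C \subset \mathcal Y$ is symmetric, using that $f^{-1}(\mathcal C)$ is closed.

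For the third part, suppose $y_0 \in \mathcal Y$ satisfies $J(y_0) = 0$. For each $k \in \mathbb N$ the set
\[F_k = f^{-1}(y_0) \cap \{x \in \mathcal X : I(x) \leq 1/k\}\]
is nonempty by the definition of $J$. It is closed because $\{y_0\}$ is closed in the Hausdorff space $\mathcal Y$ and both $f$ and $I$ are lower semicontinuous; moreover $F_k \subset \{I \leq 1\}$, which is compact. Hence $(F_k)_{k=1}^\infty$ is a decreasing family of nonempty closed subsets of a compact set, so $\bigcap_{k=1}^\infty F_k \neq \emptyset$ by the finite intersection property. Any $x_0$ in this intersection satisfies $I(x_0) = 0$ and $f(x_0) = y_0$, as required. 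I do not anticipate a serious obstacle; the only delicate point is the use of the Hausdorff hypothesis, which is needed to ensure that singletons and compact sets are closed and therefore that $f(\{I \leq c\})$ is exactly the sublevel set of $J$ and that the $F_k$ are closed.
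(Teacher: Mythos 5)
Your proof is correct, and the part of the statement the paper actually proves — the second assertion; the first is simply cited as the well-known Contraction Principle — is handled by essentially the same argument: the paper picks $x_n\in f^{-1}(y_0)$ with $I(x_n)<1/n$, extracts a cluster point using the goodness of $I$, and concludes via lower semicontinuity of $I$ and continuity of $f$, which is exactly your finite-intersection-property argument on the nested compact sets $F_k$ in sequential form. (One trivial slip: $f$ is continuous, not ``lower semicontinuous,'' but that is clearly what you meant and does not affect the closedness of $F_k$.)
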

The first assertion of Proposition~\ref{CP} is well-known as {\it the Contraction Principle}. 
Here we only include a proof of the second assertion.

\begin{proof}[Proof of the second assertion of Proposition~\ref{CP}]
Let $y_0\in\mathcal Y$ be a minimizer of $J$. By the definition of $J$,
there is a sequence $(x_n)_{n=1}^\infty$ in $\mathcal X$ such that $y_0=f(x_n)$
and $I(x_n)<1/n$ for every $n\geq1$. Since $I$ is a good rate function, $(x_n)_{n=1}^\infty$
has a limit point, say $x_0$. Since $I$ is lower semicontinuous, $x_0$ is a minimizer of $I$. 
 Since $f$ is continuous, we obtain $y_0=f(x_0)$.
\end{proof}

Let $X$ be a topological space and let $C(X)$ denote the Banach space of real-valued bounded continuous functions on $X$ endowed with the supremum norm. Recall that
the {\it weak* topology} on $\mathcal M(X)$ is the coarsest topology that makes the map $\mu\in\mathcal M(X)\mapsto\int fd\mu$ continuous for any  $f\in C(X)$. In this topology, a sequence $(\mu_n)_{n=1}^\infty$ of elements of $\mathcal M(X)$ converges to $\mu\in\mathcal M(X)$ if and only if $\lim_{n}\int fd\mu_n=\int fd\mu$ holds for any $f\in C(X)$. This condition is equivalent to $\lim_{n}\int fd\mu_n=\int fd\mu$ for any $f\in C(X)$ that is uniformly continuous  (see \cite[Chapter~9]{Str25}).

Donsker and Varadhan have identified three levels of the LDP, see e.g.,  \cite[Chapter~I]{Ell85}.
The LDP for a sequence of Borel probability measures on $\mathcal M(X)$ is referred to as {\it level-2}. The LDP for a sequence of Borel probability measures on $\mathbb R$ determined by a real-valued function on $X$ is referred to
as {\it level-1}. By the Contraction Principle, any level-2 LDP can be transferred to a level-1 LDP. 
\medskip

\noindent{\it Notation.}
For a topological space $X$, let $\mathcal M^2(X)$ denote the space of Borel probability measures on $\mathcal M(X)$ endowed with the weak* topology.
 For each $\mu\in\mathcal M(X)$, let $\delta_\mu\in\mathcal M^2(X)$ denote the unit point mass at $\mu$.

\subsection{Proof of Theorem~\ref{thm-b}}\label{pf-thmb}

 We define a sequence $(\tilde\xi_n)_{n=1}^\infty$ in $\mathcal M^2([0,1])$ by
\[\tilde\xi_n=\frac{1}{Z_{p,n}}\int dm_p(\omega)\sum_{x\in{\rm Fix}(T_\omega^n)}|(T_{\omega}^n)'x|^{-1}\delta_{V_n^\omega(x)}.\]
Also, we define a sequence $(\xi_n)_{n=1}^\infty$ in $\mathcal M([0,1])$ by
\[\xi_n=\frac{1}{Z_{p,n}}\int dm_p(\omega)\sum_{x\in{\rm Fix}(T_{\omega }^n)}|(T_{\omega}^n)'x|^{-1}
V^{\omega}_n(x).\]
The convergence in Theorem~\ref{thm-b}(a) is equivalent to the convergence of
$(\tilde{\xi}_n)_{n=1}^\infty$ to $\delta_{\lambda_p}$
in $\mathcal M^2(\Lambda)$.
The convergence in Theorem~\ref{thm-b}(b) is equivalent to the convergence of
$(\xi_n)_{n=1}^\infty$ to $\lambda_p$
in $\mathcal M([0,1])$.

Let $\Pi\colon\Omega\times[0,1]\to[0,1]$ be the projection to the second coordinate. 
The restriction of $\Pi$
to $\Lambda$ induces a continuous map $\Pi_*\colon\mu\in\mathcal M(\Lambda)\mapsto\mu\circ\Pi^{-1}\in\mathcal M([0,1])$, which induces a continuous map 
$\tilde\mu\in \mathcal M^2(\Lambda )\mapsto\tilde\mu\circ\Pi_*^{-1}\in\mathcal M^2([0,1])$.
Note that
$\Pi_*(\mu)=\nu$ implies
$\delta_\mu\circ\Pi_*^{-1}=\delta_\nu.$
In particular, $\delta_{m_p\otimes\lambda_p}\circ\Pi_*^{-1}=\delta_{\lambda_p}$ and
$\delta_{V_n^R((\omega,x)) }   \circ\Pi_*^{-1}=\delta_{V_n^\omega(x) }$ for 
$(\omega,x)\in{\rm Fix}(R^n)$, and the latter yields
 $\tilde\mu_n\circ\Pi_*^{-1}
=\tilde\xi_n.$
By Theorem~\ref{level-2-thm}(b),  
$(\tilde\mu_n)_{n=1}^\infty$ converges 
to $\delta_{m_p\otimes\lambda_p}$ in $\mathcal M^2(\Lambda)$, and hence 
$(\tilde\xi_n)_{n=1}^\infty$ converges to
$\delta_{\lambda_p}$ in $\mathcal M^2([0,1])$
as required in Theorem~\ref{thm-b}(a).

We define a continuous map
$\Xi\colon \mathcal M^2([0,1])\to \mathcal M([0,1])$ as follows. 
For each $\tilde\mu\in \mathcal M^2([0,1])$,
consider the positive normalized bounded linear functional on $C([0,1])$ given by
\[f\in C([0,1])\mapsto\int \left(\int f d\mu\right)d\tilde\mu(\mu).\]
Using Riesz's representation theorem, we define $\Xi(\tilde\mu)$ to be the unique element of $\mathcal M([0,1])$ such that
\[\int fd\Xi(\tilde\mu)=\int \left(\int 
fd\mu\right)d\tilde\mu(\mu)\ \text{ for all $f\in C([0,1])$.}\]
Clearly $\Xi$ is continuous, satisfies 
$\Xi(\tilde\xi_{n})=\xi_{n}$ for every $n\in\mathbb N$
and $\Xi(\delta_{\lambda_p})=\lambda_p$. Hence, Theorem~\ref{thm-b}(b) follows from Theorem~\ref{thm-b}(a). \qed


\subsection{Proof of Theorem~\ref{thm-a}}\label{pf-thma}
For each $\omega\in\Omega$, 
define a sequence $({\xi}^\omega_n)_{n=1}^\infty$ in $\mathcal M^2([0,1])$ by  
\[\tilde{\xi}^\omega_n=\frac{1}{Z_{\omega,n}}\sum_{ x\in{\rm Fix}(T^n_\omega)} |(T_{\omega}^n)'x|^{-1}\delta_{V_n^\omega(x)}.\]
Also, define a sequence $({\xi}^\omega_n)_{n=1}^\infty$ in $\mathcal M([0,1])$ by
\[{\xi}^\omega_n=\frac{1}{Z_{\omega,n}}\sum_{x\in{\rm Fix}(T^n_\omega)}|(T_{\omega}^n)'x|^{-1}V_{n}^{\omega}(x).\]
The convergence in Theorem~\ref{thm-a}(a) is equivalent to the convergence of
$(\tilde{\xi}^\omega_n)_{n=1}^\infty$ to $\delta_{\lambda_p}$
in $\mathcal M^2([0,1])$.
The convergence in Theorem~\ref{thm-a}(b) is equivalent to the convergence of
$(\xi^\omega_n)_{n=1}^\infty$ to $\lambda_p$ 
in $\mathcal M([0,1])$.

To finish,
we trace the proof of Theorem~\ref{thm-b}.
By Theorem~\ref{ldpup-q}(b), $(\tilde\mu_n^\omega)_{n=1}^\infty$ converges to $\delta_{m_p\otimes\lambda_p}$ in  $\mathcal M^2(\Lambda)$. Since
 $\tilde\mu_n^\omega\circ\Pi_*^{-1}
=\tilde\xi_n^\omega$, 
$(\tilde\xi_n^\omega)_{n=1}^\infty$ converges to
$\delta_{\lambda_p}$ in $\mathcal M^2([0,1])$
as required in Theorem~\ref{thm-a}(a).
Since
$\Xi(\tilde\xi_{n}^\omega)=\xi_{n}^\omega$ and   $\Xi(\delta_{\lambda_p})=\lambda_p$,
$(\xi_n^\omega)_{n=1}^\infty$ converges to
$\lambda_p$ in $\mathcal M([0,1])$
as required in Theorem~\ref{thm-a}(b).
  \qed

\section{Fundamental analysis of the Gauss-R\'enyi map}
In this section we start  
the analysis of the Gauss-R\'enyi map $R$.
In \S\ref{induce-sect} we introduce an inducing scheme and some related objects.
In \S\ref{expansion-sec} we introduce an induced map $\widehat R$ and investigate its expansion properties. In \S\ref{anneal-sec} we introduce an annealed geometric potential $\varphi$ and evaluate distortions of its Birkhoff averages.
In \S\ref{subexp} we prove several preliminary lemmas needed for the proof of Theorem~\ref{ldpup-q}. The proof of Theorem~\ref{ldpup-q} is given in \S\ref{pf-sample}.\medskip

 \noindent{\it Convention.} Since $p\in(0,1)$ is a fixed constant for the rest of the paper, it will be mostly omitted from each statement.

\subsection{Inducing scheme}\label{induce-sect}
An {\it inducing scheme} of 
a dynamical system $T\colon X\to X$
is a pair $(Y,t_Y)$, where $Y$ is a proper subset of $X$ 
and $t_Y\colon Y\to\mathbb N\cup\{\infty\}$ is a function given by 
\[t_Y(x)=\inf\{n\geq1\colon T^nx\in Y\}.\]
Given an inducing scheme $(Y,t_Y)$ of $T\colon X\to X$, 
for each $k\in\mathbb N$ we set \[\{t_Y=k\}=\{x\in Y\colon t_Y(x)=k\},\] and
 define an {\it induced map} 
\[\widehat T\colon \bigcup_{k=1}^\infty\{t_Y=k\}\mapsto {\widehat T}^{
t_Y(x)}x\in Y,\]
 and define an {\it inducing domain} 
\[ \widehat X=\bigcap_{n=0}^\infty {\widehat T}^{-n}\left(\bigcup_{k=1}^\infty\{t_Y=k\}\right).\]
In other words, $t_Y$ is the first return time to $Y$, $\widehat T$ is the first return map to $Y$ and $\widehat X$
 is the domain on which $\widehat T$ can be iterated infinitely many times.
We still denote by $\widehat T$ the restriction of $\widehat T$ to $\widehat X$.
We call 
 $\widehat T\colon\widehat X\to\widehat X$ an {\it induced system} associated with the inducing scheme $(Y,t_Y)$.

 We will consider an induced system of the Gauss-R\'enyi map $R\colon\Lambda\to\Lambda$ and its symbolic version.
 We will attach the symbol `` $\widehat\cdot$ '' to denote objects associated with inducing schemes.

\subsection{Building uniform expansion}\label{expansion-sec}
Let $\mathbb N_0$ and $\mathbb N_1$ denote the sets of even and odd positive integers respectively. A direct calculation shows that both $T_0$ and $T_1$ satisfy R\'enyi's condition, namely
 \begin{equation}\label{ren}
 \sup_{\left(\frac{2}{k+2},\frac{2}{k}\right]}\frac{|T_0''|}{|T_0'|^2}\leq2\quad\text{for all }k\in\mathbb N_0 \ \text{ and }\  \sup_{\left[\frac{k-1}{k+1},\frac{k+1}{k+3}\right) }\frac{|T_1''|}{|T_1'|^2}\leq2\quad\text{for all }k\in\mathbb N_1.
 \end{equation}

Define  $a_1\colon(\Omega\times[0,1])\setminus E\to\mathbb N$ by
\begin{equation}\label{a1-def}a_1(\omega,x)=\begin{cases}\vspace{1mm}k\in\mathbb N_0 &\text{ if }\omega_1=0\text{ and }x\in \displaystyle{\left(\frac{2}{k+2},\frac{2}{k}\right]},\\ k \in\mathbb N_1&\text{ if }\omega_1=1\text{ and }x\in \displaystyle{\left[\frac{k-1}
{k+1},\frac{k+1}{k+3}\right)}.\end{cases}\end{equation}
For each $(\omega,x)$ and $n\in\mathbb N$ such that $R^{n-1}(\omega,x)$ is defined, let
\[a_n(\omega,x)=a_1(R^{n-1}(\omega,x)).\]
 For $n\in\mathbb N$ and $a_1\cdots a_n\in \mathbb N^n$, define an {\it $n$-cylinder}
\[\varDelta(a_1\cdots a_n)=
\{(\omega,x)\in(\Omega\times[0,1])\setminus E\colon a_i(\omega,x)=a_i\text{ for }i=1,\ldots,n\}.\]
Let $\Pi\colon\Omega\times[0,1]\to[0,1]$ denote the projection to the second coordinate. 
We write $J(a_1\cdots a_n)$ for $\Pi(\varDelta(a_1\cdots a_n))$.
If $(\omega,x)\in \varDelta(a_1\cdots a_n)$ then $J(a_1\cdots a_n)$ is the maximal subinterval of $[0,1]$ containing $x$ on which $T_\omega^n$ is monotone.
The collection of $1$-cylinders defines a Markov partition for $R$: for every $k\in\mathbb N$, $R$ maps $\varDelta(k)$ bijectively onto its image and $R(\varDelta(k))$ contains $\Omega\times(0,1)$.


 \begin{figure}
\begin{center}
\includegraphics[height=4cm,width=7cm]{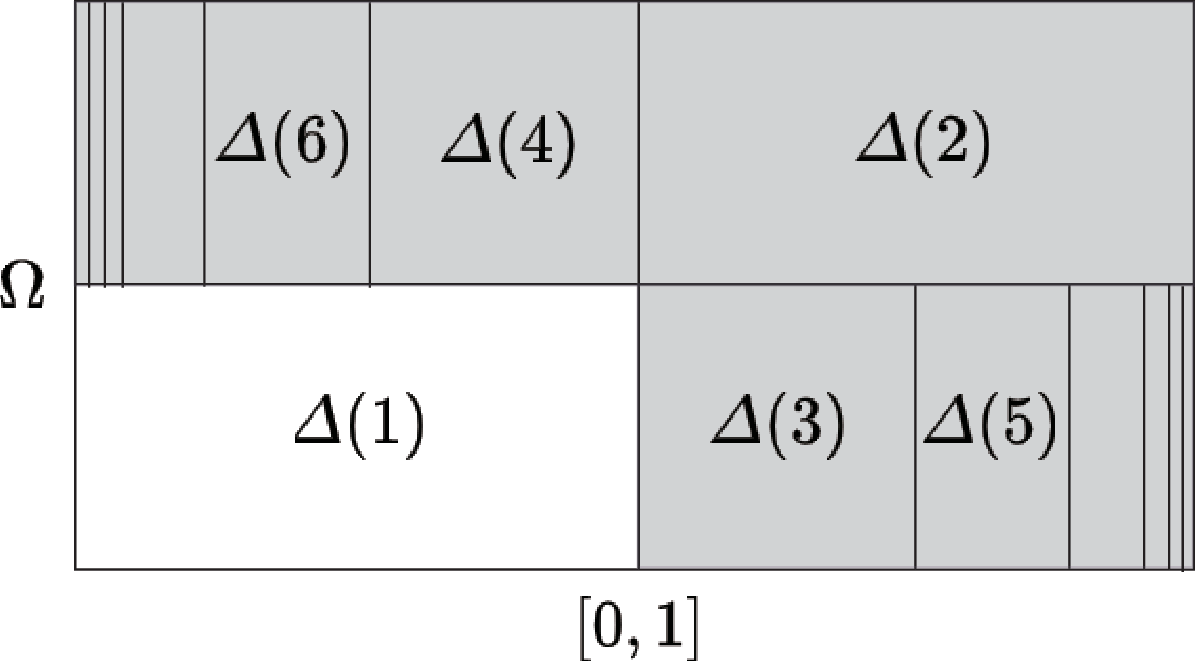}
\caption
{
The inducing domain $\widehat\Lambda$ associated with the inducing scheme $(\Lambda\setminus\varDelta(1),t_{\Lambda\setminus\varDelta(1)})$ is contained in $\bigcup_{k=2}^\infty\varDelta(k)$, the shaded area.}\label{induce-fig}
\end{center}
\end{figure}

Put 
\begin{equation}\label{omega0}\Omega_0=\{(\omega_n)_{n\in\mathbb N}\in\Omega\colon\omega_n=0\text{ for infinitely many $n$}\}.\end{equation}
Due to the presence of the neutral fixed point of the R\'enyi map $T_1$, the random composition of $T_0$ and $T_1$ is not uniformly expanding in that \[\inf_{\omega\in\Omega_0 }\inf_{\Lambda_\omega}\liminf_{n\to\infty}\frac{1}{n}\log|(T_\omega^n)'|=0.\]
To control the effect of the neutral fixed point, we consider the inducing scheme $(\Lambda\setminus\varDelta(1),t_{\Lambda\setminus\varDelta(1)})$ of
 $R\colon\Lambda\to\Lambda$
and the associated induced system $\widehat R\colon\widehat\Lambda\to\widehat\Lambda$, see \textsc{Figure}~\ref{induce-fig}.  
Let us abbreviate $t_{\Lambda\setminus\varDelta(1)}$ as $t$.
Note that $t(\omega,x)$ is finite if and only if $T_\omega x\neq0$.
The next lemma implies that the induced map $\widehat R$ is still not uniformly expanding. However, the lemma after the next one implies that $\widehat R^2$ is uniformly expanding.
\begin{lemma} 
Let $\omega\in\Omega$ satisfy $\omega_1=0$, $\omega_2=1$, $\omega_3=0$. Then we have \[\inf_{x\in\varDelta(2)}|(T_\omega^{t(\omega,x)})'x|=1.\]\end{lemma}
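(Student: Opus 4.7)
The plan is a direct computation. Fix $\omega$ with $\omega_1=0$, $\omega_2=1$, $\omega_3=0$. On this $\omega$-fiber the cylinder $\varDelta(2)$ coincides with the interval $(1/2,1]$, since for $\omega_1=0$ and $k=2$ the definition of $a_1$ gives $x\in(2/4,2/2]$, and on this interval $T_0(x)=1/x-1$ with $|T_0'(x)|=1/x^2$. I would first describe the return time $t(\omega,x)$ by splitting $\varDelta(2)$ according to whether $T_0 x\in[0,1/2)$ or $T_0 x\in[1/2,1)$.

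Because $\omega_2=1$, the set $\varDelta(1)$ restricted to the $\theta\omega$-fiber is $\{(\omega',y):\omega_1'=1,\ y\in[0,1/2)\}$. For $x\in(1/2,2/3]$ one has $T_0 x\in[1/2,1)$, so $R(\omega,x)\in\varDelta(k)$ for some odd $k\geq 3$ and hence $t(\omega,x)=1$. For $x\in(2/3,1]$, $T_0 x\in[0,1/2)$ and so $R(\omega,x)\in\varDelta(1)$; but then since $(\theta^2\omega)_1=\omega_3=0$, the image $R^2(\omega,x)$ automatically lies outside $\varDelta(1)$, giving $t(\omega,x)=2$. The hypothesis $\omega_3=0$ is precisely what caps $t\leq 2$, so no letter of $\omega$ beyond $\omega_3$ enters the calculation.

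It remains to evaluate the derivatives on each piece. On $(1/2,2/3]$, $|(T_\omega^1)'(x)|=1/x^2$ is decreasing and attains its minimum $9/4$ at $x=2/3$. On $[0,1/2)$ one has $\lfloor 1/(1-y)\rfloor=1$, so $T_1(y)=y/(1-y)$ and $T_1'(y)=1/(1-y)^2$; by the chain rule,
\[
(T_\omega^2)'(x)=T_1'(T_0 x)\,T_0'(x)=-\frac{1}{(1-T_0 x)^2\,x^2},
\]
and from $1-T_0 x=(2x-1)/x$ this simplifies to $|(T_\omega^2)'(x)|=1/(2x-1)^2$, which is decreasing on $(2/3,1]$ and equals $1$ at $x=1$ via the one-sided values $T_0'(1^-)=-1$ and $T_1'(0)=1$. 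Combining, $\inf_{x\in\varDelta(2)}|(T_\omega^{t(\omega,x)})'x|=\min\{9/4,1\}=1$.

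There is no genuine obstacle; the argument is a short explicit calculation. The feature worth emphasizing is that the extremal value $1$ arises from the one-sided derivative at the neutral fixed point, $T_1'(0)=1$, composed with $T_0'(1^-)=-1$. This is precisely why the first-return map $\widehat R$ alone fails to be uniformly expanding, which motivates the subsequent passage to $\widehat R^2$.
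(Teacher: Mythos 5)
Your proof is correct and follows essentially the same route as the paper's: both hinge on the facts that $t(\omega,x)=2$ for $x$ near $1$ and that $|(T_1\circ T_0)'x|\to 1$ as $x\to 1^-$ owing to $T_0'(1^-)=-1$ and the neutral fixed point $T_1'(0)=1$. The paper is terser --- it gets the lower bound $\geq 1$ directly from $\inf_{(0,1]}|T_0'|\geq 1$ and $\inf_{[0,1)}|T_1'|\geq 1$ and the upper bound from the single limit $\lim_{x\to 1-0}|(T_1\circ T_0)'x|=1$ --- whereas you work out the return-time partition and the explicit formula $|(T_\omega^2)'x|=1/(2x-1)^2$ on each piece, but the substance is identical.
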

\begin{proof}
Since $\inf_{(0,1]}|T_0'|\geq1$ and $\inf_{[0,1)}|T_1'|\geq1$, we have 
$\inf_{x\in\varDelta(2)}|(T_\omega^{t(\omega,x)})'x|\geq1$.
By the hypothesis on $\omega$ and $T_01=0$, we have $\lim_{x\to1-0}t(\omega,x)=2$.
Using this and the monotonicity of $|T_0'|$ on $\varDelta(2)$ and that of $|T_1'|$ on $\varDelta(1)$, we obtain
 $\inf_{x\in\varDelta(2)}|(T_\omega^{t(\omega,x)})'x|\leq\lim_{x\to1-0}|(T_1\circ T_0)'x|=1$.
\end{proof}
\begin{lemma}\label{3-exp}
If $(\omega,x)\in\Lambda\setminus\varDelta(1)$, 
$t(\omega,x)$ and $t(\widehat R(\omega,x))$ are finite 
and $a_i(\omega,x)=a_i(\varrho,y)$ for $i=1,\ldots,t(\omega,x)+t(\widehat R(\omega,x))$, then
\[|(T_\omega^{t(\omega,x)+t(\widehat R(\omega,x)) })'y|\geq|(T_\omega^{t(\omega,x)+t(\widehat R(\omega,x))-1})'(T_\omega y)|\geq\frac{9}{4}.\] \end{lemma}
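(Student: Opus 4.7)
The first inequality is immediate: by the chain rule, $(T_\omega^n)'(y)=T_{\omega_1}'(y)\cdot(T_{\theta\omega}^{n-1})'(T_\omega y)$, and $|T_{\omega_1}'(y)|\geq 1$ since $\inf_{(0,1]}|T_0'|\geq 1$ and $\inf_{[0,1)}|T_1'|\geq 1$.

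For the second inequality (the substance of the lemma), set $s=t(\omega,x)$ and $r=t(\widehat R(\omega,x))$, so $n=s+r$. Unpacking the return-time definition yields several structural facts; each depends only on the cylinder $\varDelta(a_1\cdots a_n)$, so it propagates automatically from $x$ to $y$. Namely:
\begin{itemize}
\item if $s\geq 2$, then $\omega_2=\cdots=\omega_s=1$ and $T_\omega^i y\in[0,1/2)$ for every $1\leq i\leq s-1$;
\item if $r\geq 2$, then $\omega_{s+2}=\cdots=\omega_{s+r}=1$ and $T_\omega^{s+i}y\in[0,1/2)$ for every $1\leq i\leq r-1$;
\item the exit condition $\widehat R(\omega,x)\in\Lambda\setminus\varDelta(1)$ forces either $\omega_{s+1}=0$ or $T_\omega^s y\in[1/2,1)$.
\end{itemize}

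My plan is to produce a single factor of the product $(T_{\theta\omega}^{n-1})'(T_\omega y)=\prod_{i=2}^n T_{\omega_i}'(T_\omega^{i-1}y)$ whose modulus is at least $9/4$; since every remaining factor has modulus at least one, this will close the inequality. I target the factor at the exit index $i=s+1$ (which is in range because $s\geq 1$ and $r\geq 1$). If $\omega_{s+1}=1$, the exit condition forces $T_\omega^s y\in[1/2,1)$, so $|T_1'(T_\omega^s y)|=1/(1-T_\omega^s y)^2\geq 4$. If $\omega_{s+1}=0$ and the branch label $a_{s+1}\geq 4$, then $T_\omega^s y\leq 1/2$, whence $|T_0'(T_\omega^s y)|\geq 4$.

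The delicate remaining situation is $\omega_{s+1}=0$ with $a_{s+1}=2$, i.e.\ $T_\omega^s y\in(1/2,1]$. In this situation I will extract the $9/4$ lower bound from a pair of consecutive factors rather than from one alone, mirroring the case analysis in the proof of Proposition~\ref{random-lem}. If $s\geq 2$, the pair $(\omega_s,\omega_{s+1})=(1,0)$ falls into case (ii) there, supplying $|(T_0\circ T_1)'(T_\omega^{s-1}y)|\geq 9/4$; if instead $r\geq 2$, the pair $(\omega_{s+1},\omega_{s+2})=(0,1)$ together with $T_\omega^{s+1}y\in[0,1/2)$ falls into case (iii), again yielding the bound; the leftover corner $s=r=1$ (so $n=2$) is verified by direct inspection of the two-cylinder $\varDelta(a_1a_2)$ using the explicit geometry of the top branches of $T_0$ and $T_1$. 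This corner, in which the window of available pairs essentially collapses, is expected to be the main obstacle: here one cannot invoke a clean two-step pairing argument and must combine the cylinder constraints on both $y$ and $T_\omega y$ with the explicit Möbius form of the first branch in order to close the estimate.
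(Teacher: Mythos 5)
Your structural setup, the chain-rule reduction, and the easy cases ($\omega_{s+1}=1$; $\omega_{s+1}=0$ with $a_{s+1}\geq4$; and the sub-case $s\geq2$ of the delicate situation, where $T_\omega^{s-1}y\in(1/3,1/2)$ indeed forces $|T_1'(T_\omega^{s-1}y)|>9/4$) are all correct. The gap is the sub-case $s=1$, $a_{s+1}=2$. You invoke case~(iii) of the proof of Proposition~\ref{random-lem} for the pair $(\omega_{s+1},\omega_{s+2})=(0,1)$, but that case carries the hypothesis $C(R^{n-1}(\omega,x))\geq2$, i.e.\ the $T_0$-step uses a branch other than $(1/2,1]$; here $a_{s+1}=2$ means precisely $T_\omega^sy\in(1/2,1]$, so the hypothesis fails. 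The claimed bound is in fact false: writing $u=T_\omega^sy$, one computes $|(T_1\circ T_0)'(u)|=\frac{1}{u^2}\cdot\frac{u^2}{(2u-1)^2}=\frac{1}{(2u-1)^2}$, which tends to $1$ as $u\to1$ --- this is exactly the degeneracy recorded in the unnumbered lemma immediately preceding Lemma~\ref{3-exp}. The same obstruction kills the corner $s=r=1$ with $a_2=2$: the only factor of index $\geq2$ is $|T_0'(T_\omega y)|$ with $T_\omega y\in(1/2,1]$, which is close to $1$ when $a_3$ is large and even. Concretely, on the cylinder $\varDelta(2\,2\,1\,a_4)$ with $a_4$ even and large (so $s=1$, $r=2$, $n=3$), one has $\prod_{i=2}^{n}|T_{\omega_i}'(T_\omega^{i-1}y)|\to1$, so no argument confined to the factors of index $i\geq2$ can produce $9/4$.

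The missing idea is that in this configuration the expansion must be harvested from the \emph{first} factor, which your plan excludes by design. Since $T_\omega y\in(1/2,1]$, the point $y$ is pinned into a small part of its branch: if $a_1=2$ then $1/y-1>1/2$ gives $y\in(1/2,2/3)$ and $|T_0'y|=1/y^2>9/4$; if $a_1\geq3$ then $|T_{\omega_1}'y|\geq4$ anyway. This is essentially what the paper does: its three cases are $(\omega,x)\notin\varDelta(2)$; $(\omega,x)\in\varDelta(2)$ with $T_\omega^tx\in[1/2,1)$; and $(\omega,x)\in\varDelta(2)$ with $T_\omega^tx\in(0,1/2)$, the lower bound coming respectively from the factor at index $1$, at index $t$, and at index $t+1$. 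Note that this route only yields the outer inequality $|(T_\omega^{s+r})'y|\geq9/4$, which is the only part of the lemma used later; your insistence on locating the $9/4$ among the factors of $(T_{\theta\omega}^{s+r-1})'(T_\omega y)$ is what forces you into the untenable sub-case.
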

\begin{proof} 
From the definitions of $T_0$ and $T_1$, $\inf_{(0,1]} |T'_0|\geq1$,
$\inf_{[0,1)} |T'_1|\geq1$,
the monotonicity of $|T_0|$ on $(0,1]$ and that of   $|T_1'|$ on $[0,1)$,
if $(\omega,x)\notin\varDelta(2)$ then 
\[(T_\omega^{t(\omega,x)+t(\widehat R(\omega,x)) })'y|\geq |T_{\omega_1}'y| \geq\left|T_0'\left(\frac{1}{2}\right)\right|>\frac{9}{4}.\]
If $(\omega,x)\in\varDelta(2)$ and $T_\omega^{t(\omega,x)}x\in[1/2,1)$
 then 
\[(T_\omega^{t(\omega,x)+t(\widehat R(\omega,x)) })'y|\geq |T_{t(\omega,x)}'y| \geq\left|T_1'\left(\frac{1}{3}\right)\right|=\frac{9}{4}.\]
If $(\omega,x)\in\varDelta(2)$ and $T_\omega^{t(\omega,x)}x\in(0,1/2)$ 
 then 
\[(T_\omega^{t(\omega,x)+t(\widehat R(\omega,x)) })'y|\geq |T'(T_\omega^{t(\omega,x)}y)| \geq\left|T_0'\left(\frac{1}{2}\right)\right|>\frac{9}{4}.\]
Hence the desired inequality holds.
\end{proof}

\begin{lemma}[Uniform decay of cylinders]
\label{u-decay}
There exists $K\geq1$ such that
for every $n\in\mathbb N$ and every $a_1\cdots a_n\in \mathbb N^n$,
\[|J(a_1\cdots a_n)|\leq\frac{K}{\sqrt{n}}.\]
\end{lemma}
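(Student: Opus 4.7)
The plan is to fix a word $a_1\cdots a_n\in\mathbb N^n$ and dichotomize on the count $N=\#\{1\leq i\leq n\colon a_i\neq 1\}$ of symbols that correspond to visits of the $R$-orbit to $\Lambda\setminus\varDelta(1)$. The threshold is $\sqrt n$: when $N\geq\sqrt n$, iterated applications of Lemma~\ref{3-exp} make $|J(a_1\cdots a_n)|$ exponentially small in $\sqrt n$, while when $N<\sqrt n$, a pigeonhole argument produces a long maximal run of consecutive $a_i=1$'s, and the intermittent dynamics of $T_1$ near its neutral fixed point $0$ forces $J(a_1\cdots a_n)$ to be correspondingly thin.

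For Case A, list the indices $i_1<i_2<\cdots<i_N$ with $a_{i_k}\neq 1$, so that $R^{i_k-1}(\omega,x)\in\Lambda\setminus\varDelta(1)$ with successive return times $i_{k+1}-i_k$. Applying Lemma~\ref{3-exp} at $R^{i_{2j-1}-1}(\omega,x)$ for $j=1,2,\ldots,\lfloor N/2\rfloor$ and chaining by the chain rule, with the possible leading and trailing incomplete blocks contributing a factor $\geq 1$ because $|T_0'|,|T_1'|\geq 1$ on their domains, I obtain
\[
\inf_{y\in J(a_1\cdots a_n)}|(T_\omega^n)'(y)|\;\geq\;\left(\tfrac{9}{4}\right)^{\lfloor (N-1)/2\rfloor}.
\]
Since $T_\omega^n$ is a diffeomorphism from $J(a_1\cdots a_n)$ into $[0,1]$, the mean value theorem gives $|J(a_1\cdots a_n)|\leq(4/9)^{\lfloor(N-1)/2\rfloor}$, which for $N\geq\sqrt n$ is eventually dominated by any fixed polynomial decay, in particular by $K_1/\sqrt n$ for some uniform constant $K_1$.

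For Case B, the $n-N>n-\sqrt n$ positions carrying $a_i=1$ lie in at most $N+1\leq\sqrt n+1$ maximal runs, so some run has length $\ell\geq(n-\sqrt n)/(\sqrt n+1)\geq\sqrt n/3$ for $n\geq 4$. If this run is $a_j=\cdots=a_{j+\ell-1}=1$, then $\omega_j=\cdots=\omega_{j+\ell-1}=1$, and putting $y=T_\omega^{j-1}x$ for $x\in J(a_1\cdots a_n)$ gives $T_1^k(y)\in[0,1/2)$ for $k=0,\ldots,\ell-1$. Since $T_1(u)=u/(1-u)$ on $[0,1/2)$, induction yields $T_1^k(y)=y/(1-ky)$, and the condition $T_1^{\ell-1}(y)<1/2$ forces $y<1/(\ell+1)$. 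Hence $T_\omega^{j-1}(J(a_1\cdots a_n))\subseteq[0,1/(\ell+1))$, and because $|(T_\omega^{j-1})'|\geq 1$ everywhere (trivially when $j=1$), the mean value theorem yields $|J(a_1\cdots a_n)|\leq 1/(\ell+1)\leq 3/\sqrt n$.

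Taking $K=\max(K_1,3)$ after absorbing the finitely many small-$n$ cases through the trivial estimate $|J|\leq 1$ finishes the proof. The main obstacle is almost entirely bookkeeping in Case A: carefully indexing the shifted sample paths $\theta^{i_{2j-1}-1}\omega$ when iterating Lemma~\ref{3-exp}, accommodating the case $i_1>1$ in which the orbit begins inside $\varDelta(1)$, and controlling the tail from the last completed double-return to time $n$. No distortion estimates are required, since the pointwise lower bounds $|T_0'|,|T_1'|\geq 1$ make both pullback steps elementary.
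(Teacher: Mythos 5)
Your proof is correct and follows essentially the same strategy as the paper's: it combines the uniform expansion of the two-fold induced map (Lemma~\ref{3-exp}) when the word contains enough symbols $a_i\neq 1$ with the explicit neutral-fixed-point estimate $J(1^{\ell})\subset[0,1/(\ell+1))$ when it does not. Your two-case dichotomy on $\#\{i\colon a_i\neq 1\}$, with the pigeonhole extraction of a long run of $1$s, is a slightly cleaner bookkeeping than the paper's three-case split on the number of blocks of consecutive $1$s, but the key lemma and the final estimates are the same.
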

\begin{proof}
Take an integer $M\geq4$ such that for every $n\geq M$,
\begin{equation}\label{decay1}\left(\frac{9}{4}\right)^{-\sqrt{n}/2+1}\leq\frac{1}{\sqrt{n}}.\end{equation} 
Set $K=\sqrt{M}/2$.
Clearly we have $|J(k)|\leq1/2$
for every $k\in\mathbb N$.
Hence, for every $1\leq n\leq M$
and every $a_1\cdots a_n\in \mathbb N^n$ we have $|J(a_1\cdots a_n)|\leq 1/2=K/\sqrt{M}\leq K/\sqrt{n}$ as required.

Let $n\geq M+1$ and $a_1\cdots a_n\in \mathbb N^n$. We may assume $a_1\cdots a_n$ contains $1$, for otherwise
a direct calculation shows $|J(a_1\cdots a_n)|\leq1/(n+1)$. 
Let $N\geq1$ denote the total number of blocks of consecutive $1$s in $a_1\cdots a_n$.
A block of length not exceeding $\sqrt{n}$ is called a short block.
A block which is not short is called a long block.
If $N\geq\sqrt{n}/2$, then Lemma~\ref{3-exp} implies 
$|J(a_1\cdots a_n)|\leq(9/4)^{-\sqrt{n}/2+1}$. This and \eqref{decay1} together yield the desired inequality.

Suppose $N<\sqrt{n}/2$.
If there is no long block, then $\#\{1\leq i\leq n\colon a_i\neq 1\}\geq n-\sqrt{n}N>n/2$. 
Let $j=\min\{i\geq1\colon a_i\neq1\}$ and $k=\max\{i\geq1\colon a_i\neq1\}$.
Define $(\omega_i)_{i\in\mathbb N}\in\Omega$ by
$\omega_i\equiv a_i\mod 2$.
By the mean value theorem and Lemma~\ref{3-exp}, for some $\ell\geq1$ and all $x\in T_\omega^{j-1}(J(a_1\cdots a_n))$ we have
\[\begin{split}1\geq &|T_{\theta^{j}\omega}^{k-j+1}\circ T_\omega^{j-1}(J(a_1\cdots a_n))|\\
=&|T_{\theta^{j}\omega}^{t(\theta^{j}\omega,x)+t(\widehat R(\theta^{j}\omega,x)) +\cdots+ t(\widehat R^{\ell-1}(\theta^{j}\omega,x))}\circ T_\omega^{j-1}(J(a_1\cdots a_n))|\\
\geq&
\left(\frac{9}{4}\right)^{\lfloor \ell/2\rfloor}|T_\omega^{j-1}(J(a_1\cdots a_n))|\geq\left(\frac{9}{4}\right)^{\lfloor \ell/2\rfloor}|J(a_1\cdots a_n)|.\end{split}\]
Since $\ell\geq\lfloor n/2\rfloor-1\geq n/2-2$ we have
$\ell/2\geq n/4-1$, and so 
$\lfloor \ell/2\rfloor\geq\lfloor n/4-1\rfloor=\lfloor n/4\rfloor-1$. Combining this inequality with the above yields 
$|J(a_1\cdots a_n)|\leq (9/4)^{-\lfloor n/4\rfloor+1}$.
By $n\geq M+1\geq 5$ and \eqref{decay1}, we obtain
$(9/4)^{-\lfloor n/4\rfloor+1}\leq 
(9/4)^{-\sqrt{n}/2+1}\leq1/\sqrt{n}$.
If there is a long block, then
there exists $1\leq j\leq n-1$ such that $a_i=1$
for $i=j,\ldots, j+\lfloor \sqrt{n}\rfloor-1$, and thus
 $T_\omega^{j-1}(J(a_1\cdots a_n))\subset J(1^{\lfloor \sqrt{n}\rfloor})\subset[0,1/(\lfloor \sqrt{n}\rfloor+1)$. By the mean value theorem we obtain $|J(a_1\cdots a_n)|\leq 1/ \sqrt{n}$. 
\end{proof}

\subsection{Annealed geometric potential}\label{anneal-sec}
We introduce a function $\varphi\colon(\Omega\times[0,1])\setminus E\to \mathbb R$ by
\[\varphi(\omega,x)=\log p(\omega_1)-\log|T_{\omega_1}'x|,\]
where \[p(\omega_1)=\begin{cases}1-p\ &\text{ if }\omega_1=0,\\
p\ &\text{ if }\omega_1=1.\end{cases}\]
Note that $\varphi$ is unbounded and $\sup\varphi<0.$ 
We call $\varphi$ an {\it annealed geometric potential}.
For $n\in\mathbb N$ write
$S_n\varphi$ for the Birkhoff sum $\sum_{i=0}^{n-1}\varphi\circ R^i$, and put $S_0\varphi\equiv0$ for convenience. The annealed geometric potential ties in  with 
Theorem~\ref{thm-b}.
For all $(\omega,x)\in\Lambda$ and all $n\in\mathbb N$ we have
\[\exp(S_n\varphi(\omega,x))=Q_n^p(\omega)|(T_\omega^n)'x|^{-1}.\]
Compare this formula with \eqref{zpn}.
The next distortion estimate is straight forward.

\begin{lemma}\label{dist-basic}
For all $n\in\mathbb N$, $a_1\cdots a_n\in \mathbb N^n$ and any pair $(\omega,x),(\varrho,y)$ of points in $\varDelta(a_1\cdots a_n)$,
\[S_n\varphi(\omega,x)-S_n\varphi(\varrho,y)\leq 2\sum_{i=1}^n|T^i_\omega x-T^i_\varrho y|.\]
\end{lemma}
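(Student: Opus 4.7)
The plan is to use the cylinder structure to eliminate the stochastic weights $\log p(\omega_i)$ from $S_n\varphi$, and then run a standard bounded-distortion argument based on R\'enyi's condition \eqref{ren}.

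First I would observe from the definition \eqref{a1-def} that the parity of $a_1(\omega,x)$ coincides with $\omega_1$, and hence the parity of $a_i(\omega,x) = a_1(R^{i-1}(\omega,x))$ coincides with $\omega_i$. Therefore $(\omega,x), (\varrho,y) \in \varDelta(a_1\cdots a_n)$ forces $\omega_i = \varrho_i$ for $i = 1,\ldots,n$, and the terms $\log p(\omega_{i+1}) = \log p(\varrho_{i+1})$ cancel when one expands
\[
S_n\varphi(\omega,x) - S_n\varphi(\varrho,y) = \sum_{i=0}^{n-1} \Bigl( \log|T'_{\omega_{i+1}}(T^i_\varrho y)| - \log|T'_{\omega_{i+1}}(T^i_\omega x)| \Bigr).
\]

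For each $i$, both $T^i_\omega x$ and $T^i_\varrho y$ lie in $J(a_{i+1}\cdots a_n) \subset J(a_{i+1})$, a subinterval on which $T_{\omega_{i+1}}$ is a monotone $C^2$ diffeomorphism. The next step is to write
\[
\log|T'_{\omega_{i+1}}(T^i_\varrho y)| - \log|T'_{\omega_{i+1}}(T^i_\omega x)| = \int_{T^i_\omega x}^{T^i_\varrho y} \frac{T''_{\omega_{i+1}}(u)}{T'_{\omega_{i+1}}(u)} \, du,
\]
and perform the change of variable $v = T_{\omega_{i+1}}(u)$, $dv = T'_{\omega_{i+1}}(u)\, du$, which recasts the integral as $\int_{T^{i+1}_\omega x}^{T^{i+1}_\varrho y} \bigl(T''_{\omega_{i+1}}/(T'_{\omega_{i+1}})^2\bigr) dv$ along the appropriate inverse branch. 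By \eqref{a1-def}, the interval $J(a_{i+1})$ is exactly one of those featured in R\'enyi's condition \eqref{ren}, so the integrand has absolute value at most $2$. Consequently
\[
\Bigl| \log|T'_{\omega_{i+1}}(T^i_\varrho y)| - \log|T'_{\omega_{i+1}}(T^i_\omega x)| \Bigr| \leq 2\, |T^{i+1}_\omega x - T^{i+1}_\varrho y|.
\]

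Summing over $i = 0,\ldots,n-1$ and reindexing produces the stated bound. The only step requiring any care is aligning the domain of each term with the intervals of \eqref{ren}, which follows directly from the cylinder hypothesis via \eqref{a1-def}; past this bookkeeping I expect the argument to be mechanical.
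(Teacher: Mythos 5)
Your proof is correct and follows the same route as the paper: both reduce the difference of Birkhoff sums to $\log\bigl(|(T_\omega^n)'y|/|(T_\omega^n)'x|\bigr)$ using the cancellation of the $\log p(\omega_i)$ terms on a common cylinder, and then bound each factor via R\'enyi's condition \eqref{ren}. The paper simply states that the bound ``follows from the chain rule and \eqref{ren}''; your integral and change-of-variables computation is exactly the standard argument being invoked there.
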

\begin{proof}We have \[S_n\varphi(\omega,x)-S_n\varphi(\varrho,y)=\log\frac{|(T_\omega^n)'y|}{|(T_\omega^n)'x|}=\log\frac{|(T_\varrho^n)'y|}{|(T_\varrho^n)'x|}.\]
Then the desired inequality follows from the chain rule and \eqref{ren}.
\end{proof}

 For each $n\in\mathbb N$ define 
\[D_n(\varphi)=\sup \{S_n\varphi(\omega,x)-S_n\varphi(\varrho,y)\colon a_i(\omega,x)=a_i(\varrho,y),\ i=1,\ldots,n\}.\]
Note that $D_1(\varphi)<\infty$, and $D_n(\varphi)$ is decreasing in $n$.
\begin{lemma}\label{mild-lem}
We have
$D_n(\varphi)=O(\sqrt{n})$ $(n\to\infty).$
\end{lemma}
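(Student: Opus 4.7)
The plan is to chain together Lemma~\ref{dist-basic} and Lemma~\ref{u-decay}. Suppose $(\omega,x),(\varrho,y)\in\varDelta(a_1\cdots a_n)$. By the Markov property of the cylinder structure, for each $i\in\{1,\ldots,n-1\}$ the points $R^i(\omega,x)$ and $R^i(\varrho,y)$ share the same first $n-i$ itinerary coordinates $a_{i+1},\ldots,a_n$, so both $T_\omega^i x$ and $T_\varrho^i y$ lie in the single interval $J(a_{i+1}\cdots a_n)$. For $i=n$ we simply use that $[0,1]$ has length $1$.

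With this observation in hand, Lemma~\ref{u-decay} would give
\[|T_\omega^i x-T_\varrho^i y|\leq|J(a_{i+1}\cdots a_n)|\leq \frac{K}{\sqrt{n-i}}\quad(i=1,\ldots,n-1),\]
and $|T_\omega^n x-T_\varrho^n y|\leq 1$. Plugging these estimates into Lemma~\ref{dist-basic} and reindexing $j=n-i$, I would obtain
\[S_n\varphi(\omega,x)-S_n\varphi(\varrho,y)\leq 2\left(1+K\sum_{j=1}^{n-1}\frac{1}{\sqrt{j}}\right).\]
Since $\sum_{j=1}^{n-1}j^{-1/2}\leq 2\sqrt{n}$, taking the supremum over admissible pairs yields $D_n(\varphi)\leq 2+4K\sqrt{n}$, which is exactly the claimed bound $D_n(\varphi)=O(\sqrt{n})$.

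There is no real obstacle here; the proof is a short bookkeeping argument once the two preceding lemmas are invoked correctly. The only point requiring a moment of care is the Markov-type observation that iterating $R$ shortens the cylinder address by the correct amount, so that Lemma~\ref{u-decay} can be applied with denominator $\sqrt{n-i}$ rather than $\sqrt{n}$; it is this sharpening that converts the trivial $O(n)$ bound into the desired $O(\sqrt{n})$ bound.
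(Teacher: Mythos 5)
Your proposal is correct and follows essentially the same route as the paper: apply Lemma~\ref{dist-basic}, observe that for $1\leq i\leq n-1$ both iterates lie in the cylinder $J(a_{i+1}\cdots a_n)$ of length $n-i$, invoke Lemma~\ref{u-decay} to bound each term by $K/\sqrt{n-i}$, and sum to get $O(\sqrt{n})$. No gaps.
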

\begin{proof}
Let $n\in\mathbb N$, $a_1\cdots a_n\in \mathbb N^n$ and let
 $(\omega,x),(\varrho,y)\in \varDelta(a_1\cdots a_n)$.
Using Lemma~\ref{dist-basic} and then Lemma~\ref{u-decay}, we have
\[\begin{split}S_n\varphi(\omega,x)-S_n\varphi(\varphi,y)&\leq
2\sum_{i=1}^n
|T_\omega^ix-T_\varrho^iy|\\
&\leq 2+2\sum_{i=1}^{n-1}|J(a_{i+1}\cdots a_n)|\leq K\sum_{i=1}^n\frac{1}{\sqrt{n-i+1}}=O(\sqrt{n}),\end{split}\]
which implies the assertion of the lemma.
\end{proof}


\subsection{Preliminary lemmas for the proof of Theorem~\ref{ldpup-q}}\label{subexp}
One key point in the proof 
of Theorem~\ref{ldpup-q} is that
  the measure 
$\int_\Omega\tilde\mu^\omega_n(\cdot)dm_p(\omega)$ equals $\tilde\mu_n(\cdot)$ up to subexponential factors. 
To show this, we first provide subexponential bounds on the normalizing constants
$Z_{\omega,n}$ in \eqref{zomegan}.

 \begin{lemma}\label{b-d}
 For all $\omega\in\Omega$ and $n\in\mathbb N$ we have
\[\exp(-D_n(\varphi) )\leq Z_{\omega,n}\leq \exp(D_n(\varphi)).\]
In particular, $Z_{p,n}$ is finite for all $p\in(0,1)$ and all $n\in\mathbb N$.
\end{lemma}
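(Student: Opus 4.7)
The plan is to identify each fixed point of $T_\omega^n$ with a single $n$-cylinder of the Markov partition and to compare $|(T_\omega^n)'x|^{-1}$ at that fixed point with the Lebesgue length of the cylinder via the mean value theorem, using $D_n(\varphi)$ as the distortion buffer. Since the cylinder lengths sum to $1$, the two-sided bound on $Z_{\omega,n}$ will follow immediately, and the upper bound persists after integrating in $\omega$ to give finiteness of $Z_{p,n}$.

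Fix $\omega \in \Omega$ and $n \in \mathbb N$. I would enumerate the cylinders $J(a_1 \cdots a_n)$ over admissible words $(a_1,\ldots,a_n) \in \mathbb N^n$ with $a_i \equiv \omega_i \pmod{2}$; by the full-branch Markov property stated after \eqref{a1-def}, for each such word $T_\omega^n$ restricts to a strictly monotone bijection from $J(a_1\cdots a_n)$ onto $(0,1)$. Because $|T_0'(x)|=1/x^2>1$ on $(0,1)$ and $|T_1'(x)|=1/(1-x)^2>1$ on $(0,1)$, the chain rule gives $|(T_\omega^n)'|>1$ on the interior of every cylinder, so $T_\omega^n(x)-x$ is strictly monotone there, and combining this with the sign change of $T_\omega^n(x)-x$ at the endpoints shows that each cylinder contains exactly one fixed point $x_{a_1\cdots a_n}$. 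The cylinders are pairwise disjoint subintervals of $[0,1]$ whose union covers $\Lambda_\omega$; by Proposition~\ref{random-lem}(c) the complement $[0,1]\setminus\Lambda_\omega$ is contained in $\mathbb Q$ and so has Lebesgue measure zero, which yields $\sum |J(a_1\cdots a_n)|=1$.

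Next, the mean value theorem provides $\xi_{a_1\cdots a_n}\in J(a_1\cdots a_n)$ with $|(T_\omega^n)'(\xi_{a_1\cdots a_n})|^{-1}=|J(a_1\cdots a_n)|$. Using the identity $\exp(S_n\varphi(\omega,y))=Q_p^n(\omega)|(T_\omega^n)'y|^{-1}$ recorded in \S\ref{anneal-sec} and the fact that $Q_p^n(\omega)$ is constant on each cylinder, the difference $-\log|(T_\omega^n)'(x_{a_1\cdots a_n})|+\log|(T_\omega^n)'(\xi_{a_1\cdots a_n})|$ coincides with $S_n\varphi(\omega,x_{a_1\cdots a_n})-S_n\varphi(\omega,\xi_{a_1\cdots a_n})$, which by definition of $D_n(\varphi)$ has absolute value at most $D_n(\varphi)$. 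This yields
\[e^{-D_n(\varphi)}\,|J(a_1\cdots a_n)|\;\leq\;|(T_\omega^n)'(x_{a_1\cdots a_n})|^{-1}\;\leq\;e^{D_n(\varphi)}\,|J(a_1\cdots a_n)|,\]
and summing over admissible words gives the stated bounds on $Z_{\omega,n}$. Finally, since $D_n(\varphi)\leq D_1(\varphi)<\infty$, Fubini delivers $Z_{p,n}=\int Z_{\omega,n}\,dm_p(\omega)\leq e^{D_1(\varphi)}<\infty$, which is the ``in particular'' statement.

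The only care points I foresee are the one-fixed-point-per-cylinder claim and the full-Lebesgue-measure cover, but both follow cleanly from the full-branch Markov structure of $R$ and the strict expansion of $T_0,T_1$ on open $1$-cylinders; beyond that, the argument is a textbook mean value theorem and distortion estimate, so I anticipate no serious obstacle.
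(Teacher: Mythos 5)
Your proof is correct and follows essentially the same route as the paper: one fixed point per $n$-cylinder, the mean value theorem to equate $|(T_\omega^n)'\xi|^{-1}$ with the cylinder length, the distortion constant $D_n(\varphi)$ to compare the fixed point with $\xi$, and summation of the cylinder lengths to $1$. (For the finiteness of $Z_{p,n}$ you need only $D_n(\varphi)<\infty$ for each fixed $n$, which is guaranteed by Lemma~\ref{mild-lem}; the inequality $D_n(\varphi)\leq D_1(\varphi)$ is not needed.)
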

\begin{proof}
Let $\omega\in\Omega$, $n\in\mathbb N$ and let $a_1\cdots a_n\in\mathbb N^{\mathbb N}$ satisfy
 $\omega_i\equiv a_i$ mod $2$ for $i=1,\ldots,n$. Clearly,
 $J(a_1\cdots a_n)\cap {\rm Fix}(T_\omega^n)$ is a singleton.
  Let $x(a_1\cdots a_n)$
denote the element of this singleton.  
  By the mean value theorem,
  for each $a_1\cdots a_n\in \mathbb N^n$ there exists $y(a_1\cdots a_n)\in J(a_1\cdots a_n)$ such that
  $|(T_\omega^n)'y(a_1\cdots a_n)|^{-1}=|J(a_1\cdots a_n)|.$ We have
  \[\exp(-D_n(\varphi))|J(a_1\cdots a_n)|\leq|(T_\omega^n)'x(a_1\cdots a_n)|^{-1}\leq \exp(D_n(\varphi))|J(a_1\cdots a_n)|.\]
Summing the first inequality over all relevant $a_1\cdots a_n$ gives 
\[Z_{\omega,n}\geq \exp(-D_n(\varphi))\sum_{\substack{a_1\cdots a_n\in \mathbb N^n \\ a_i\equiv \omega_i\mod 2\\ \ i=1,\ldots, n   }}|J(a_1\cdots a_n)|
=\exp(-D_n(\varphi)),\]
as required.
Summing the second inequality in the double inequalities over all relevant $a_1\cdots a_n$ yields the required upper bound.
\end{proof}

\begin{lemma}\label{lem-cal1}
For any Borel subset $\mathcal C$ of $\mathcal M(\Lambda)$ and every $n\in\mathbb N$,
\[ \exp(-2D_n(\varphi))\tilde\mu_n(\mathcal C)\leq\int_{\Omega}\tilde\mu_n^\omega(\mathcal C)dm_p(\omega)
\leq \exp(2D_n(\varphi))\tilde\mu_n(\mathcal C).\]
\end{lemma}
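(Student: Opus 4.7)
The plan is to compare the two measures directly by unfolding their definitions, using Fubini to decompose the integration over $\omega$ as a sum over the prefix $a=\omega_1\cdots\omega_n\in\{0,1\}^n$, and then invoking Lemma~\ref{b-d} to absorb the ratio of normalizing constants into the prefactor $e^{\pm 2D_n(\varphi)}$.

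First I would observe that $T_\omega^n$, ${\rm Fix}(T_\omega^n)$, $|(T_\omega^n)'x|$ and $Z_{\omega,n}$ depend on $\omega$ only through $a=\omega_1\cdots\omega_n$; write $T_a^n$ and $Z_{a,n}$ for the common values on the cylinder $[a]\subset\Omega$. Decomposing $m_p=\sum_{a\in\{0,1\}^n}Q_p^n(a)\,m_p(\,\cdot\mid[a])$ and applying Fubini yields
\[
\int_\Omega \tilde\mu_n^\omega(\mathcal C)\,dm_p(\omega)=\sum_{a\in\{0,1\}^n}\frac{Q_p^n(a)}{Z_{a,n}}\sum_{x\in{\rm Fix}(T_a^n)}|(T_a^n)'x|^{-1}\,\mathbb P_a(\mathcal C,x),
\]
where $\mathbb P_a(\mathcal C,x)=m_p\{V_n^R(\omega,x)\in\mathcal C\mid\omega\in[a]\}$. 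On the other hand, $(\omega,x)\in{\rm Fix}(R^n)$ iff $\omega$ is the periodic extension $a^\infty$ of some $a$ and $x\in{\rm Fix}(T_a^n)$, so
\[
\tilde\mu_n(\mathcal C)=\frac{1}{Z_{p,n}}\sum_{a\in\{0,1\}^n}Q_p^n(a)\sum_{x\in{\rm Fix}(T_a^n)}|(T_a^n)'x|^{-1}\,\1_{\mathcal C}(V_n^R(a^\infty,x)).
\]

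Second, by Lemma~\ref{b-d} each $Z_{a,n}$ lies in $[e^{-D_n(\varphi)},e^{D_n(\varphi)}]$, and integrating this pointwise bound against $m_p$ shows that $Z_{p,n}$ lies in the same interval. Hence $Z_{p,n}/Z_{a,n}\in[e^{-2D_n(\varphi)},e^{2D_n(\varphi)}]$ uniformly in $a$, which is the source of the prefactor $e^{\pm 2D_n(\varphi)}$ once the remaining parts of the two expansions are matched term-by-term across $a\in\{0,1\}^n$ and $x\in{\rm Fix}(T_a^n)$.

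Finally, the identification of $\mathbb P_a(\mathcal C,x)$ with $\1_{\mathcal C}(V_n^R(a^\infty,x))$ is the delicate step: the empirical measure $V_n^R(\omega,x)$ depends on the tail of $\omega$ through the $\Omega$-components $\theta^i\omega$ of its support points, so $V_n^R(\omega,x)$ and $V_n^R(a^\infty,x)$ are generically distinct elements of $\mathcal M(\Lambda)$ even when $\omega\in[a]$. I expect this reconciliation to be the main obstacle, to be carried out using the continuity of $\omega\mapsto V_n^R(\omega,x)$ in the weak* topology together with the skew-product structure $R^i(\omega,x)=(\theta^i\omega,T_\omega^ix)$, so that the remaining $\mathcal C$-dependent factors may be compared at exponential accuracy and the claim follows.
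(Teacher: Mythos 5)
Your overall route coincides with the paper's: unfold both sides, decompose the $\omega$-integral over the cylinders $[a]$, $a\in\{0,1\}^n$, using $m_p([a])=Q_p^n(a)$ and the fact that ${\rm Fix}(T_\omega^n)$, $|(T_\omega^n)'x|$ and $Z_{\omega,n}$ depend on $\omega$ only through its first $n$ coordinates, and then absorb the ratio of normalizing constants via Lemma~\ref{b-d}. Your second step is exactly the paper's: since $Z_{\omega,n}\in[e^{-D_n(\varphi)},e^{D_n(\varphi)}]$ pointwise, the average $Z_{p,n}=\int Z_{\omega,n}\,dm_p(\omega)$ lies in the same interval, so $Z_{\omega,n}/Z_{p,n}\in[e^{-2D_n(\varphi)},e^{2D_n(\varphi)}]$, which is the source of the prefactor.

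The gap is in your third step, and your proposed repair would not work. For an arbitrary Borel set $\mathcal C$ one cannot reconcile $\mathbb P_a(\mathcal C,x)$ with $\1_{\mathcal C}(V_n^R(a^\infty,x))$ ``at exponential accuracy'' by weak* continuity of $\omega\mapsto V_n^R(\omega,x)$: membership in a Borel set is a $0$--$1$ quantity with no stability under weak* approximation, and since the periodic samples $a^\infty$ form an $m_p$-null set one could in principle choose a Borel $\mathcal C$ containing every $V_n^R(a^\infty,x)$ and none of the measures $V_n^R(\omega,x)$ with $\omega$ non-periodic, making $\mathbb P_a(\mathcal C,x)=0$ while $\1_{\mathcal C}(V_n^R(a^\infty,x))=1$. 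The reconciliation is definitional rather than topological: in $\tilde\mu_n^\omega$ the Dirac mass attached to $x\in{\rm Fix}(T_\omega^n)$ is to be placed at the empirical measure of the genuine $R$-periodic orbit, i.e.\ at $V_n^R(\omega',x)$ with $\omega'$ the periodization of $\omega_1\cdots\omega_n$. This is how the paper's computation implicitly treats the summand---it rewrites $\sum_{(\omega,x)\in{\rm Fix}(R^n)}Q_p^n(\omega)(\cdots)$ as $\int_\Omega\sum_{x\in{\rm Fix}(T_\omega^n)}(\cdots)\,dm_p(\omega)$, which is an exact identity precisely because the integrand is then constant on each cylinder $[a]$---and it is harmless downstream, since $\Pi_*V_n^R(\omega,x)=V_n^\omega(x)$ is insensitive to the tail of $\omega$. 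With that reading your first two steps already complete the proof; without it, no continuity argument closes the gap for general Borel $\mathcal C$.
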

\begin{proof}
By Lemma~\ref{b-d}, for all $\omega\in\Omega$ and all $n\in\mathbb N$ we have
\begin{equation}\label{inequa1}\exp(-2D_n(\varphi))\leq {Z_{\omega,n} \Big/ \int_\Omega
Z_{\omega',n} dm_p(\omega')}\leq \exp(2D_n(\varphi)).\end{equation}
By the definitions of $\tilde\mu_n$ and $\tilde\mu_n^\omega$,
for any Borel subset $\mathcal C$ of $\mathcal M(\Lambda)$ and all $n\in\mathbb N$,
\begin{equation}\label{inequa2}\begin{split}
\tilde\mu_n(\mathcal C)&=\frac{1}{Z_{p,n}}\sum_{
\substack{(\omega,x)\in{\rm Fix}(R^n)\\ V_n^R(\omega,x)\in\mathcal C}}Q_p^n(\omega) |(T_\omega^n)'x|^{-1}\\
&=\int_\Omega \sum_{\substack{x\in{\rm Fix}(T^n_\omega)\\ V_n^R(\omega,x)\in\mathcal C}}|(T_{\omega}^n)'x|^{-1} dm_p(\omega) \Big/\int_\Omega Z_{\omega',n}  dm_p(\omega') \\
&=\int_\Omega \tilde\mu_n^\omega(\mathcal C)\left({Z_{\omega,n} \Big/ \int_\Omega
Z_{\omega',n} dm_p(\omega')}\right)dm_p(\omega).
\end{split}\end{equation}
Combining \eqref{inequa1} and \eqref{inequa2} yields the desired inequality. \end{proof}



The next lemma gives an upper bound for each closed subset of $\mathcal M(\Lambda)$ by the rate function $I_p$, but is not sufficient for Theorem~\ref{ldpup-q}(a) since the set of permissible samples depends on the closed set in consideration.
\begin{lemma}\label{lem-cal2}For any closed subset $\mathcal C$ of $\mathcal M(\Lambda)$, there exists a Borel subset $\Gamma(\mathcal C)$ of $\Omega$ such that $m_p(\Gamma(\mathcal C))=1$ and
for every $\omega\in\Gamma(\mathcal C)$, \[\limsup_{n\to\infty}\frac{1}{n}\log \tilde\mu_{n}^\omega(\mathcal C)\leq -\inf_{\mathcal C} I_p. \]\end{lemma}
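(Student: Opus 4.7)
The plan is to combine the annealed upper bound from Theorem~\ref{level-2-thm}(a) with the subexponential equivalence between the averaged measure $\int_\Omega\tilde\mu_n^\omega(\cdot)\,dm_p(\omega)$ and $\tilde\mu_n$ provided by Lemma~\ref{lem-cal1}, and then to pass from the resulting averaged bound to an $m_p$-almost sure bound by a standard Markov/Borel--Cantelli argument.

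The first step is to observe that Lemma~\ref{lem-cal1} together with Lemma~\ref{mild-lem} (which gives $D_n(\varphi)=O(\sqrt{n})$) yields
\[\frac{1}{n}\log\int_\Omega\tilde\mu_n^\omega(\mathcal C)\,dm_p(\omega)=\frac{1}{n}\log\tilde\mu_n(\mathcal C)+O(n^{-1/2}),\]
so the upper-bound half of Theorem~\ref{level-2-thm}(a), applied to the closed set $\mathcal C$, transfers without loss to the averaged measure:
\[\limsup_{n\to\infty}\frac{1}{n}\log\int_\Omega\tilde\mu_n^\omega(\mathcal C)\,dm_p(\omega)\leq -\alpha,\qquad\text{where }\alpha:=\inf_{\mathcal C}I_p.\]
If $\alpha=0$ the conclusion is trivial with $\Gamma(\mathcal C)=\Omega$, since $\tilde\mu_n^\omega(\mathcal C)\leq 1$. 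If $\alpha>0$, I would fix sequences $0<\gamma_k<\beta_k<\alpha$ with $\gamma_k\uparrow\alpha$ (taking $\gamma_k=k$, $\beta_k=k+1$ when $\alpha=\infty$). For each $k$ and all $n$ large enough, the previous display gives $\int_\Omega\tilde\mu_n^\omega(\mathcal C)\,dm_p(\omega)\leq e^{-n\beta_k}$, and Markov's inequality then yields
\[m_p\bigl\{\omega\in\Omega\colon\tilde\mu_n^\omega(\mathcal C)>e^{-n\gamma_k}\bigr\}\leq e^{n\gamma_k}\cdot e^{-n\beta_k}=e^{-n(\beta_k-\gamma_k)},\]
which is summable in $n$. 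By the Borel--Cantelli lemma there is a Borel set $\Gamma_k\subset\Omega$ with $m_p(\Gamma_k)=1$ on which $\tilde\mu_n^\omega(\mathcal C)\leq e^{-n\gamma_k}$ holds for all sufficiently large $n$. Setting $\Gamma(\mathcal C):=\bigcap_k\Gamma_k$ gives a full $m_p$-measure set such that $\limsup_{n\to\infty}n^{-1}\log\tilde\mu_n^\omega(\mathcal C)\leq-\gamma_k$ for every $k$, and letting $k\to\infty$ produces the bound $-\alpha$.

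No individual step is delicate: the lemma is essentially a soft consequence of Lemma~\ref{mild-lem}, the subexponentiality of the distortion being exactly what guarantees that the passage from $\tilde\mu_n$ to its $m_p$-average preserves exponential rates. The real difficulty, to be handled elsewhere in \S\ref{pf-sample}, will be to upgrade these $\mathcal C$-dependent sets $\Gamma(\mathcal C)$ into a single full-measure set of samples which works for every closed set simultaneously, as required by Theorem~\ref{ldpup-q}(a); that step will call on the exponential tightness part of Theorem~\ref{level-2-thm}(a) together with a countable covering argument at the level of $\mathcal M(\Lambda)$.
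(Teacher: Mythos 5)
Your proposal is correct and follows essentially the same route as the paper: transfer the annealed upper bound of Theorem~\ref{level-2-thm}(a) to the averaged measure via Lemma~\ref{lem-cal1} and the subexponential bound $D_n(\varphi)=O(\sqrt{n})$ from Lemma~\ref{mild-lem}, then apply Markov's inequality and Borel--Cantelli at a countable family of levels approaching $\inf_{\mathcal C}I_p$ and intersect the resulting full-measure sets. The paper parametrizes the levels by $\varepsilon\in(0,1)$ (and treats $\inf_{\mathcal C}I_p=\infty$ separately with levels $kn$), but this is only a cosmetic difference from your $\gamma_k<\beta_k$ scheme.
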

\begin{proof} Let $\mathcal C$ be a closed subset of $\mathcal M(\Lambda)$. We may assume $\inf_{\mathcal C}I_p>0$, for otherwise the inequality is obvious. 
We first consider the case $\inf_{\mathcal C}I_p<\infty$.
For $\varepsilon\in(0,1)$ and $n\geq1$, set 
\[\Omega_{\varepsilon,n}=\left\{\omega\in\Omega\colon \tilde\mu_n^\omega(\mathcal C)\geq \exp\left(-n(1-\varepsilon)\inf_{\mathcal C} I_p\right)\right\}.\]
By Markov's inequality and the second inequality in Lemma~\ref{lem-cal1}, 
\[
\begin{split}m_p(\Omega_{\varepsilon,n})
&\leq \exp\left(n(1-\varepsilon)\inf_{\mathcal C} I_p\right)\int_{\Omega}\tilde\mu_n^\omega(\mathcal C)d m_p(\omega)\\
&\leq \exp(2D_n(\varphi))\exp\left(n(1-\varepsilon)\inf_{\mathcal C}I_p\right)\tilde\mu_n(\mathcal C).\end{split}\]
By the LDP in Theorem~\ref{level-2-thm}(a),
$m_p(\Omega_{\varepsilon,n})$ decays exponentially as $n$ increases.
By Borel-Cantelli's lemma, the inequality $\tilde\mu_n^\omega(\mathcal C)\geq \exp(-n(1-\varepsilon)\inf_{\mathcal C} I_p )$ holds only for finitely many $n$ for $m_p$-almost every $\omega\in\Omega.$
Since $\varepsilon\in(0,1)$ is arbitrary, we obtain 
the desired inequality for $m_p$-almost every $\omega\in\Omega$. 

To treat the remaining case $\inf_{\mathcal C}I_p=\infty$, for $k,n\in\mathbb N$ we set 
\[\Omega_{k,n}=\left\{\omega\in\Omega\colon \tilde\mu_n^\omega(\mathcal C)\geq e^{-kn }\right\}.\]
By Markov's inequality and Lemma~\ref{lem-cal1},
\[m_p(\Omega_{k,n})
\leq e^{kn}\int_{\Omega}\tilde\mu_n^\omega(\mathcal C)d m_p(\omega)\leq \exp(2D_n(\varphi))e^{kn}\tilde\mu_n(\mathcal C).\]
Since $\mathcal C$ is closed, the LDP in Theorem~\ref{level-2-thm}(a) gives $\limsup_n(1/n)\log \tilde\mu_n(\mathcal C)\leq-\inf_{\mathcal C}I_p=-\infty$. Hence
$m_p(\Omega_{k,n})$ decays exponentially as $n$ increases.
By Borel-Cantelli's lemma, there exists a Borel subset $\Gamma_k(\mathcal C)$ of $\Omega$ such that $m_p(\Gamma_k(\mathcal C))=1$, and for any $\omega\in\Gamma_k(\mathcal C)$ the inequality $\tilde\mu_n^\omega(\mathcal C)\geq e^{-kn}$ holds only for finitely many $n$. 
Put $\Gamma(\mathcal C)=\bigcap_{k=1}^\infty\Gamma_k(\mathcal C)$.
We have $m_p(\Gamma(\mathcal C))=1$, and  $\limsup_n(1/n)\log \tilde\mu_{n}^\omega(\mathcal C)=-\infty=-\inf_{\mathcal C}I_p$ for all $\omega\in\Gamma(\mathcal C)$ as required.
\end{proof}

Since $\mathcal M(\Lambda)$ is non-compact, we need the following auxiliary lemma that leads to the exponential tightness of $(\tilde\mu^{\omega}_n)_{n=1}^\infty$ as in Proposition~\ref{ldpup-q}(a).
\begin{lemma}\label{lem-cal3}For any $L>0$ there exists a compact subset $\mathcal K_L$ of $\mathcal M(\Lambda)$ and a Borel subset $\Gamma_L$ of $\Omega$ such that $m_p(\Gamma_L)=1$ and 
for every $\omega\in\Gamma_L$,
\[\limsup_{n\to\infty}\frac{1}{n}\log\tilde\mu^\omega_n(\mathcal M(\Lambda)\setminus \mathcal K_L)\leq-L.\]\end{lemma}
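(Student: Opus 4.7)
The plan is to adapt the proof of Lemma~\ref{lem-cal2}, with the exponential tightness of $(\tilde\mu_n)_{n=1}^\infty$ from Theorem~\ref{level-2-thm}(a) playing the role of the LDP upper bound, and with Lemma~\ref{mild-lem} used to absorb the subexponential distortion factor $\exp(2D_n(\varphi))$ coming from Lemma~\ref{lem-cal1}.

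Given $L>0$, the first step is to invoke exponential tightness of $(\tilde\mu_n)_{n=1}^\infty$ at level $L+1$ to obtain a compact set $\mathcal K_L\subset\mathcal M(\Lambda)$ satisfying
\[\limsup_{n\to\infty}\frac{1}{n}\log\tilde\mu_n(\mathcal M(\Lambda)\setminus\mathcal K_L)\leq-(L+1).\]
For each $n\in\mathbb N$, define the exceptional set of samples
\[\Omega_{L,n}=\bigl\{\omega\in\Omega\colon \tilde\mu_n^\omega(\mathcal M(\Lambda)\setminus\mathcal K_L)\geq e^{-Ln}\bigr\}.\]
By Markov's inequality followed by the upper bound in Lemma~\ref{lem-cal1},
\[m_p(\Omega_{L,n})\leq e^{Ln}\int_\Omega\tilde\mu_n^\omega(\mathcal M(\Lambda)\setminus\mathcal K_L)\,dm_p(\omega)\leq e^{Ln}\exp(2D_n(\varphi))\,\tilde\mu_n(\mathcal M(\Lambda)\setminus\mathcal K_L).\]

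By Lemma~\ref{mild-lem}, $\exp(2D_n(\varphi))=e^{o(n)}$, while by the choice of $\mathcal K_L$ one has $\tilde\mu_n(\mathcal M(\Lambda)\setminus\mathcal K_L)\leq e^{-(L+1-o(1))n}$. Combining these gives $m_p(\Omega_{L,n})\leq e^{-n/2}$ for all sufficiently large $n$, which is summable in $n$. Borel--Cantelli then produces a Borel set $\Gamma_L\subset\Omega$ with $m_p(\Gamma_L)=1$ such that for every $\omega\in\Gamma_L$ the inequality $\tilde\mu_n^\omega(\mathcal M(\Lambda)\setminus\mathcal K_L)\geq e^{-Ln}$ holds only for finitely many $n$, which is precisely the desired conclusion.

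There is no serious obstacle here: the substantive work was done in Theorem~\ref{level-2-thm}(a) (exponential tightness), Lemma~\ref{mild-lem} (mild distortion), and Lemma~\ref{lem-cal1} (comparison between annealed and averaged-quenched measures), and the present lemma is a routine Markov--Borel--Cantelli transfer combining these three ingredients. The only point requiring a bit of care is choosing $\mathcal K_L$ using exponential tightness at a level strictly larger than $L$ to leave room for the $e^{o(n)}$ distortion factor.
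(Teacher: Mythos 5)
Your proof is correct and follows essentially the same route as the paper's: exponential tightness of $(\tilde\mu_n)_{n=1}^\infty$ at a level strictly above $L$ (the paper uses $2L$ where you use $L+1$, which makes no difference since $D_n(\varphi)=O(\sqrt n)$ by Lemma~\ref{mild-lem}), then Markov's inequality combined with Lemma~\ref{lem-cal1}, and finally Borel--Cantelli. No gaps.
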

\begin{proof}
By the exponential tightness of
$(\tilde\mu_n)_{n=1}^\infty$ in Theorem~\ref{level-2-thm}(a),  
for any $L>0$
there is a compact subset $\mathcal K_L$ of
$\mathcal M(\Lambda)$ such that 
\begin{equation}\label{exp-sup}\limsup_{n\to\infty}\frac{1}{n}\log\tilde\mu_n(\mathcal M(\Lambda)\setminus \mathcal K_L)\leq-2L.\end{equation}
For $n\in\mathbb N$, set 
\[\Omega_{L,n}=\left\{\omega\in\Omega\colon \tilde\mu_n^\omega(\mathcal M(\Lambda)\setminus \mathcal K_L)\geq e^{-Ln} \right\}.\]
By Markov's inequality and  Lemma~\ref{lem-cal1},
\[m_p(\Omega_{L,n})
\leq e^{Ln}\int_{\Omega}\tilde\mu_n^\omega(\mathcal M(\Lambda)\setminus \mathcal K_L)d m_p(\omega)\\
\leq \exp(2D_n(\varphi))e^{Ln}\tilde\mu_n(\mathcal M(\Lambda)\setminus \mathcal K_L).\]
By Lemma~\ref{mild-lem} and \eqref{exp-sup},
$m_p(\Omega_{L,n})$ decays exponentially as $n$ increases.
By Borel-Cantelli's lemma, the number of those $n\in\mathbb N$
with $\tilde\mu_n^\omega(\mathcal M(\Lambda)\setminus \mathcal K_L)\geq e^{-Ln}$ is finite for $m_p$-almost every $\omega\in\Omega.$
\end{proof}

\subsection{Proof of Theorem~\ref{ldpup-q}}\label{pf-sample}
We fix a metric on $\mathcal M(\Lambda)$ that generates the weak* topology, and a
  countable dense subset $\mathcal D$ of on $\mathcal M(\Lambda)$.
 For $\mu\in \mathcal D$, $L\in\mathbb N$ 
let $B(\mu,1/L)$ denote the closed ball of radius $1/L$ about $\mu$.
 By Lemma~\ref{lem-cal2}, there exists a Borel subset $\Gamma(B(\mu,1/L))$ of $\Omega$ with full $m_p$-measure such that
if $\omega\in\Gamma(B(\mu,1/L))$ then
\begin{equation}\label{eq-zero}\limsup_{n\to\infty}\frac{1}{n}\log \tilde\mu_{n}^\omega( B(\mu,1/L))\leq -\inf_{B(\mu,1/L)}I_p.\end{equation}
In view of Lemma~\ref{lem-cal3}, we fix an increasing sequence $(\mathcal K_L)_{L=1}^\infty$ of compact subsets of $\mathcal M(\Lambda)$ 
and a sequence $(\Gamma_L)_{L=1}^\infty$ of Borel subsets of $\Omega$ with full $m_p$-measure such that $\bigcup_{L=1}^\infty\mathcal K_L=\mathcal M(\Lambda)$,  and for all  $L\in\mathbb N$ and all
$\omega\in\Gamma_L$,
\begin{equation}\label{eq-un}\limsup_{n\to\infty}\frac{1}{n}\log\tilde\mu^\omega_n(\mathcal M(\Lambda)\setminus \mathcal K_L)\leq-L.\end{equation}
We set \[\Gamma=\left(\bigcap_{\mu\in\mathcal D}\bigcap_{L=1}^\infty\Gamma(B(\mu,1/L))\right)\cap \left(\bigcap_{L=1}^\infty\Gamma_L\right).\]
Clearly we have $m_p(\Gamma)=1$.
If $\omega\in\Gamma$, then $(\tilde\mu_n^\omega)_{n=1}^\infty$ is exponentially tight by \eqref{eq-un}.

Let $\mathcal C$ be a non-empty closed subset of $\mathcal M(\Lambda)$ and let $L\in\mathbb N$.
Let $\mathcal G$ be an open subset of $\mathcal M(\Lambda)$ that contains $\mathcal C\cap \mathcal K_L$.
Since $\mathcal C\cap \mathcal K_L$ is compact,
there exists a finite subset $\{\mu_1,\ldots,\mu_s\}$ of $\mathcal D$ and $L_1,\ldots,L_s\in\mathbb N$ such that
$\mathcal C\cap \mathcal K_L\subset \bigcup_{i=1}^sB(\mu_i,1/L_i)\subset\mathcal G$.
By \eqref{eq-zero} applied to each of these closed balls, we have
\[\begin{split}\limsup_{n\to\infty}\frac{1}{n}\log \tilde\mu_{n}^\omega(\mathcal C\cap \mathcal K_L)&\leq\max_{1\leq i\leq s}\limsup_{n\to\infty}\frac{1}{n}\log \tilde\mu_n^{\omega}(B(\mu_i,1/L_i))\\
&\leq\max_{1\leq i\leq s}\left( -\inf_{B(\mu_i,1/L_i)}I_p\right)\leq-\inf_{\mathcal G} I_p.\end{split}\]
Since $\mathcal G$ is an arbitrary open set containing $\mathcal C\cap\mathcal K_L$ and $I_p$ is lower semicontinuous,
\begin{equation}\label{eq-deux}\limsup_{n\to\infty}\frac{1}{n}\log \tilde\mu_{n}^\omega(\mathcal C\cap \mathcal K_L)\leq-\inf_{\mathcal C\cap \mathcal K_L} I_p.\end{equation}
From \eqref{eq-un} and \eqref{eq-deux}, for every $\omega\in\Gamma$ we obtain
\begin{equation}\label{eq-trois}\limsup_{n\to\infty}\frac{1}{n}\log\tilde\mu^\omega_n(\mathcal C)\leq\max\left\{-\inf_{\mathcal C\cap \mathcal K_L} I_p,-L\right\}.\end{equation}
If $L\geq\inf_{\mathcal C\cap\mathcal K_L}I_p$, then \eqref{eq-trois} yields \[\limsup_{n\to\infty}\frac{1}{n}\log\tilde\mu^\omega_n(\mathcal C)\leq-\inf_{\mathcal C\cap \mathcal K_L} I_p\leq -\inf_{\mathcal C}I_p.\]
Combining this with \eqref{eq-un} we obtain the desired inequality. 
If $L<\inf_{\mathcal C\cap\mathcal K_L}I_p$ for all $L\in\mathbb N$, then 
we obtain $\inf_\mathcal CI_p=\infty$ since
$(\mathcal K_L)_{L=1}^\infty$ is increasing and $\bigcup_{L=1}^\infty\mathcal K_L=\mathcal M(\Lambda)$.
 Moreover, \eqref{eq-trois} yields $\limsup_n(1/n)\log\tilde\mu^\omega_n(\mathcal C)=-\infty.$ The proof of
Theorem~\ref{ldpup-q}(a) is complete.

By Theorem~\ref{ldpup-q}(a), 
$(\tilde\mu_n^\omega)_{n=1}^\infty$ is tight for $m_p$-almost every $\omega\in\Omega$. By Prohorov's theorem, it has a limit point.
Let $(\tilde{\mu}_{n_j}^\omega)_{j=1}^\infty$
be an arbitrary convergent subsequence of $(\tilde\mu_n^\omega)_{n=1}^\infty$ with the limit measure $\tilde\mu^\omega$. For a proof of
Theorem~\ref{ldpup-q}(b)
it suffices to show 
 $\tilde\mu^\omega=\delta_{m_p\otimes\lambda_p}$.

We fix a metric that generates the weak* topology on $\mathcal M(\Lambda)$. 
Since $I_p$ is a good rate function by Theorem~\ref{level-2-thm}(a), for any $c>0$ the level set
 $I_p^c=\{\mu\in\mathcal M(\Lambda)\colon I_p(\mu)\leq c\}$
is compact.  
Let $\nu\in\mathcal M(\Lambda)\setminus\{m_p\otimes\lambda_p\}$. 
By the last assertion of Proposition~\ref{CP} we have $I_p(\nu)>0$, and so $\nu\notin I_p^{I(\nu)/2}$.
 Take $r>0$ such that the closed ball $B(\nu,r)$ of radius $r$ about $\nu$ in $\mathcal M(\Lambda)$
 does not intersect $I_p^{I(\nu)/2}$.
By the weak* convergence of
$(\tilde{\mu}_{n_j}^\omega)_{j=1}^\infty$ to $\tilde\mu^\omega$
 and the large deviations upper bound for closed sets in Theorem~\ref{ldpup-q}(a),
we have
\[\begin{split}\tilde\mu^\omega({\rm int}( B(\nu,r)))&\leq\liminf_{j\to\infty}\tilde\mu_{n_j}^\omega({\rm int}(B(\nu,r)))
\leq\limsup_{j\to\infty}\tilde\mu_{n_j}^\omega( B(\nu,r))\\&\leq\limsup_{j\to\infty}\exp(-I_p(\nu) n_j/2)=0.\end{split}\]
Hence, the support of $\tilde\mu^\omega$ does not contain $\nu$. Since
 $\nu$ is an arbitrary element of
 $\mathcal M(\Lambda)$ which is not $m_p\otimes\lambda_p$, it follows that
 $\tilde\mu^\omega=\delta_{m_p\otimes\lambda_p}$.
 The proof of Theorem~\ref{ldpup-q}(b) is complete.
 \qed

\begin{remark}
Since $\mathcal M(\Lambda)$ is non-compact,
the tightness in Theorem~\ref{ldpup-q}(a) was used in establishing the convergence in Theorem~\ref{ldpup-q}(b).
Nevertheless, $\mathcal M(\Omega\times[0,1])$ is compact.
By applying the Contraction Principle to the inclusion $\mathcal M(\Lambda)\hookrightarrow\mathcal M(\Omega\times[0,1])$, one can transfer
the LDP in Theorem~\ref{level-2-thm}(a) to the LDP for 
the sequence $(\tilde\mu_n)_{n=1}^\infty$ viewed as a sequence in $\mathcal M^2(\Omega\times[0,1])$. Using the latter LDP, 
one can establish a version of the upper bound in Theorem~\ref{ldpup-q}(a) for any closed subset of $\mathcal M(\Omega\times[0,1])$,
as well as the convergence of $(\tilde\mu_n)_{n=1}^\infty$ to $\delta_{m_p\otimes\lambda_p}$ in   $\mathcal M^2(\Omega\times [0,1])$. 
These are actually sufficient for the proof of Theorem~\ref{thm-a}.

One merit of considering large deviations on the non-compact space $\mathcal M(\Lambda)$ rather than on $\mathcal M(\Omega\times[0,1])$ is that one can permit bounded continuous functions on $\Lambda$ that are naturally associated with the random continued fraction expansion \eqref{r-expansion},  and do not have continuous extensions  to $\Omega\times[0,1]$. See Corollary~\ref{lyacor2} for details. \end{remark}

 \section{Establishing the LDP for the Gauss-R\'enyi map}
This last section is mostly dedicated to the proof of Theorem~\ref{level-2-thm}.
In \S\ref{general} we summarize results on the thermodynamic formalism for the countable full shift. In \S\ref{ind-sec} we 
consider an inducing scheme of the full shift and introduce a symbolic coding of the associated induced system.
In \S\ref{LDP-shift} we recall the result of the second-named author \cite{Tak22} that give a sufficient condition for the level-2 LDP on periodic points in terms of induced potentials. We also recall the result in \cite{Tak20} on the uniqueness of minimizer of the rate function.
In order to implement all these results,
 in $\S$\ref{coding} 
 we show that the Gauss-R\'enyi map 
is topologically conjugate to the shift map 
on the countable full shift.
In \S\ref{random-dist} we perform distortion estimates for an induced version of the annealed geometric potential $\varphi$.  In \S\ref{variational-sec} we establish the existence and uniqueness of the equilibrium state for the symbolic version of the potential $\varphi$, and show that this equilibrium state is the symbolic version of the measure $m_p\otimes\lambda_p$.
In \S\ref{section4-1} we complete the proof of Theorem~\ref{level-2-thm}. In \S\ref{level-1-sec} we state two corollaries of independent interest on annealed and quenched level-1 large deviations, and apply them to the problem of frequency of digits in the random continued fraction expansion.

\subsection{Thermodynamic formalism for the countable full shift}\label{general}
Consider the {\it countable full shift} \begin{equation}\label{CMS}\mathbb N^{\mathbb N}=\{z=(z_n)_{n=1}^\infty\colon z_n\in\mathbb N\text{ for }n\in\mathbb N\},\end{equation} which is the cartesian product topological space of the discrete space $\mathbb N$.
We introduce main constituent components of the thermodynamic formalism for the countable full shift \eqref{CMS}, and
state a variational principle and a relationship between equilibrium states and Gibbs states.
  Our main reference is \cite{MauUrb03} that contains results on countable Markov shifts which are not necessarily the full shift.

The left shift $\sigma\colon\mathbb N^{\mathbb N}\to\mathbb N^{\mathbb N}$ given by $\sigma (z_n)_{n=1}^\infty=(z_{n+1})_{n=1}^\infty$ is continuous.
For $n\in\mathbb N$ and $a_1\cdots a_n\in\mathbb N^n$, define an {\it $n$-cylinder}
\[[a_1\cdots a_n]=\{z\in \mathbb N^{\mathbb N}\colon z_i=a_i\text{ for }
 i=1,\ldots,n\}.\]
Let $\mathcal M(\mathbb N^{\mathbb N},\sigma)$ denote the set of $\sigma$-invariant Borel probability measures. For each $\mu\in\mathcal M(\mathbb N^{\mathbb N},\sigma)$, let $h(\mu)\in[0,\infty]$ denote the measure-theoretic entropy of $\mu$ with respect to $\sigma$.
 Let $\phi\colon \mathbb N^{\mathbb N}\to\mathbb R$ be a function, called a {\it potential}. 
 For each $n\in\mathbb N$ we write
$S_n\phi$ for the Birkhoff sum 
$\sum_{i=0}^{n-1}\phi\circ \sigma^i$,
and
 introduce a {\it pressure} 
\[P(\phi)=\lim_{n\to\infty}\frac{1}{n}\log\sum_{a_1\cdots  a_n\in \mathbb N^n}
\sup_{[a_1\cdots  a_n]}\exp S_n\phi. \]
This limit exists by the sub-additivity, 
which is never $-\infty$.
We say:
 \begin{itemize}\item $\phi$ is {\it acceptable} if it is uniformly continuous and satisfies
\[\sup_{a\in \mathbb N}\left(\sup_{[a]}\phi-\inf_{[a]}\phi\right)<\infty;\]
\item $\phi$ is {\it locally H\"older continuous} 
if there exist constants $K>0$ and $\gamma\in(0,1)$ such that ${\rm var}_n(\phi)\leq K\gamma^n$, where
\[{\rm var}_n(\phi)=\sup\{\phi(z)-\phi(w)\colon z,w\in\mathbb N^{\mathbb N},\  z_i=w_i\ \text{ for } i=1,\ldots,n\}.\]
\end{itemize}

  Let $\phi\colon \mathbb N^{\mathbb N}\to\mathbb R$ be acceptable and satisfy $P(\phi)<\infty$. Then
 $\sup\phi$ is finite (see \cite[Proposition~2.1.9]{MauUrb03}). 
 Let \[\mathcal M_\phi(\mathbb N^{\mathbb N},\sigma)=\left\{\mu\in\mathcal M(\mathbb N^{\mathbb N},\sigma)\colon\int\phi d\mu>-\infty\right\}.\]
 By \cite[Theorem~2.1.7]{MauUrb03}, for any $\mu\in \mathcal M_\phi(\mathbb N^{\mathbb N},\sigma)$ we have 
 $h(\mu)+\int\phi d\mu\leq P(\phi)<\infty$,
 and so $h(\mu)<\infty$. 
   The following equality is known as the variational principle.
\begin{prop}[{\cite[Theorem~2.1.7, Theorem~2.1.8]{MauUrb03}}]\label{thermo1}Let $\phi\colon\mathbb N^{\mathbb N}\to\mathbb R$ be acceptable and satisfy $P(\phi)<\infty$. Then
\[P(\phi)=\sup\left\{h(\mu)+\int\phi  d\mu\colon\mu\in\mathcal M_\phi(\mathbb N^{\mathbb N},\sigma)\right\}.\]\end{prop}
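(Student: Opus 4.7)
The plan is to prove the two inequalities separately, following the classical pattern of Walters' variational principle adapted to the countable-state setting.

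For the upper bound $P(\phi)\geq h(\mu)+\int\phi\,d\mu$, fix $\mu\in\mathcal M_\phi(\mathbb N^{\mathbb N},\sigma)$ and let $\alpha=\{[a]\colon a\in\mathbb N\}$ denote the partition into $1$-cylinders. Its $n$-th refinement $\alpha^n=\bigvee_{i=0}^{n-1}\sigma^{-i}\alpha$ has atoms the $n$-cylinders. The key computation is to apply the log-sum inequality (equivalently, Jensen's inequality for $-\log$) to
\[H_\mu(\alpha^n)+\int S_n\phi\,d\mu\leq\sum_{a_1\cdots a_n}\mu([a_1\cdots a_n])\log\frac{\sup_{[a_1\cdots a_n]}\exp S_n\phi}{\mu([a_1\cdots a_n])}\leq\log\sum_{a_1\cdots a_n}\sup_{[a_1\cdots a_n]}\exp S_n\phi.\]
Dividing by $n$ and letting $n\to\infty$ gives $h_\mu(\sigma,\alpha)+\int\phi\,d\mu\leq P(\phi)$. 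It then remains to identify $h_\mu(\sigma,\alpha)$ with $h(\mu)$, which holds because $\alpha$ is a one-sided generator of the Borel $\sigma$-algebra of $\mathbb N^{\mathbb N}$; a Kolmogorov–Sinai type argument, valid even when $H_\mu(\alpha)=\infty$ by truncating to $\{1,\dots,N\}\cup\{>N\}$ and letting $N\to\infty$, closes the gap.

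For the lower bound $P(\phi)\leq\sup(h(\mu)+\int\phi\,d\mu)$, I would approximate by finite subsystems. For each $N\in\mathbb N$, set $\Sigma_N=\{1,\dots,N\}^{\mathbb N}$; this is a compact shift, on which the classical variational principle yields
\[P_N:=\lim_{n\to\infty}\frac{1}{n}\log\sum_{a_1\cdots a_n\in\{1,\dots,N\}^n}\sup_{[a_1\cdots a_n]\cap\Sigma_N}\exp S_n\phi=\sup_{\nu\in\mathcal M(\Sigma_N,\sigma)}\left(h(\nu)+\int\phi\,d\nu\right).\]
Every such $\nu$ extends trivially to $\mathbb N^{\mathbb N}$ with $\int\phi\,d\nu>-\infty$ (since $\sup\phi<\infty$ by acceptability and $P(\phi)<\infty$), hence lies in $\mathcal M_\phi(\mathbb N^{\mathbb N},\sigma)$. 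So $P_N\leq \sup_{\mu\in\mathcal M_\phi(\mathbb N^{\mathbb N},\sigma)}(h(\mu)+\int\phi\,d\mu)$. The lower bound follows once we show $\lim_{N\to\infty}P_N=P(\phi)$: the truncated partition functions are monotone in $N$, and acceptability ensures that the contribution of cylinders containing symbols $>N$ can be controlled uniformly, so the full sum defining $P(\phi)$ is the monotone limit of its $N$-truncations.

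The main obstacle I anticipate is in the upper bound: namely justifying $h(\mu)=h_\mu(\sigma,\alpha)$ when $H_\mu(\alpha)$ may be infinite, and justifying term-by-term the Jensen step when both sides may a priori be infinite. The acceptability hypothesis $\sup_a(\sup_{[a]}\phi-\inf_{[a]}\phi)<\infty$ is what makes both manipulations legitimate: it gives bounded oscillation of $S_n\phi$ on $n$-cylinders (up to an error growing at most additively with the number of excursions through cylinders of bounded distortion), which lets one replace $\sup_{[a_1\cdots a_n]}\exp S_n\phi$ by $\exp S_n\phi(z)$ for any chosen $z\in[a_1\cdots a_n]$ with a factor not affecting the limit, and it lets one truncate the alphabet to $\{1,\dots,N\}$ with vanishing error. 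The lower bound is more routine, since everything there reduces to the compact case.
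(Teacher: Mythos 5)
First, a point of comparison: the paper does not prove this proposition at all --- it is imported verbatim from \cite[Theorems~2.1.7 and 2.1.8]{MauUrb03} --- so there is no in-text argument to measure you against. Your sketch follows the same classical route as the cited source (log-sum/Jensen for the upper bound, exhaustion by finite subalphabets for the lower bound), and it is sound in outline. One remark on the upper bound: the difficulty you anticipate with $H_\mu(\alpha)=\infty$ does not actually arise. For the \emph{full} shift, acceptability gives $Z_n\geq e^{-nV}Z_1^n$ with $Z_1=\sum_a\sup_{[a]}e^{\phi}$ and $V=\sup_a(\sup_{[a]}\phi-\inf_{[a]}\phi)$, so $P(\phi)<\infty$ forces $Z_1<\infty$; the log-sum inequality then yields $H_\mu(\alpha)+\int\phi\,d\mu\leq\log Z_1$, and $\int\phi\,d\mu>-\infty$ already implies $H_\mu(\alpha)<\infty$, so Kolmogorov--Sinai applies without any truncation device.

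The one genuine soft spot is the step $\lim_{N\to\infty}P_N=P(\phi)$. The justification ``the full sum defining $P(\phi)$ is the monotone limit of its $N$-truncations'' conflates two limits: for each fixed $n$ the truncated partition function $Z_n^{(N)}$ does increase to $Z_n$, but that says nothing about the interchange with $n\to\infty$, which is precisely \cite[Theorem~2.1.5]{MauUrb03} and is where the work lies. Moreover, the bounded-oscillation half of acceptability alone is not enough: the crude estimate $\inf_{[w]}\exp S_n\phi\geq e^{-nV}\sup_{[w]}\exp S_n\phi$ fed into the supermultiplicativity of the truncated partition functions only yields $\sup_N P_N\geq P(\phi)-V$, with a constant defect that does not vanish. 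To close the gap one must invoke the uniform-continuity half of acceptability, i.e.\ $\mathrm{var}_j(\phi)\to0$, which upgrades the oscillation of $S_n\phi$ on $n$-cylinders to $\sum_{j=1}^{n}\mathrm{var}_j(\phi)=o(n)$; with that, choosing a finite set of $n$-cylinders carrying at least half of $Z_n$ and concatenating them gives $P_F\geq\frac{1}{n}\log Z_n-o(1)$ and the exhaustion goes through. (A minor slip in the same paragraph: $\int\phi\,d\nu>-\infty$ for $\nu$ carried by $\Sigma_N$ follows from $\phi$ being bounded \emph{below} on the finitely many cylinders $[1],\ldots,[N]$ --- again via bounded oscillation --- not from $\sup\phi<\infty$, which only bounds the integral from above.)
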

Let $\phi\colon\mathbb N^{\mathbb N}\to\mathbb R$ be acceptable and satisfy $P(\phi)<\infty$. 
A measure $\mu\in\mathcal M_\phi(\mathbb N^{\mathbb N},\sigma)$ is called {\it an equilibrium state for the potential $\phi$}
if \[P(\phi)=h(\mu)+\int\phi d\mu.\]
 A measure $\mu\in\mathcal M(\mathbb N^{\mathbb N})$ is called {\it a Gibbs state for the potential $\phi$} if there exists a constant $K\geq1$ 
such that 
for all $n\in\mathbb N$, all $a_1\cdots a_n\in\mathbb N^n$
 and all $x\in [a_1\cdots a_n]$,
\[
K^{-1}\leq\frac{\mu([a_1\cdots a_n])}{\exp (S_n\phi(x )-P(\phi)n)}\leq K.
\]
\begin{prop}[{\cite[Theorem~2.2.9,\ Corollary~2.7.5]{MauUrb03}}]\label{thermo2} 
Let $\phi\colon\mathbb N^{\mathbb N}\to\mathbb R$ 
be locally H\"older continuous and satisfy $P(\phi)<\infty$. 
 Then there exists a unique shift-invariant Gibbs state $\mu_{\phi}$ for $\phi$. If $\int\phi d\mu_\phi>-\infty$, then $\mu_\phi$ is the unique equilibrium state for $\phi$. \end{prop}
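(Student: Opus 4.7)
The plan is to follow the classical transfer operator strategy adapted to the countable alphabet, which is the route taken in Mauldin--Urbanski. The central object will be the Ruelle operator $\mathcal L_\phi$ defined on locally Hölder continuous observables by $\mathcal L_\phi g(z) = \sum_{a\in\mathbb N} e^{\phi(az)} g(az)$, where $az$ denotes concatenation. The finiteness of $P(\phi)$ ensures that the partition function $\sum_a \sup_{[a]} e^{S_n\phi}$ grows at rate $e^{nP(\phi)}$, and this will translate into $\mathcal L_\phi$ being well-defined on a suitable cone of positive Hölder continuous functions. The first step is to establish a bounded distortion / fluctuation bound $|S_n\phi(z)-S_n\phi(w)|\le K\gamma^n/(1-\gamma)$ whenever $z,w$ agree on the first $n$ coordinates, which is immediate from local Hölder continuity and is the engine of every subsequent estimate.

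Next I would produce the conformal eigenmeasure and eigenfunction. A direct Perron-type argument on the dual $\mathcal L_\phi^\ast$ acting on Borel probability measures—realised as a limit point of normalised pushforwards of a reference cylinder mass under finite truncations of the alphabet—yields a probability measure $\nu$ with $\mathcal L_\phi^\ast\nu = e^{P(\phi)}\nu$. Using the fluctuation bound together with uniform lower bounds on $\mathcal L_\phi^n \mathbf 1$ coming from $P(\phi)<\infty$, one extracts a positive locally Hölder continuous eigenfunction $h$ of $\mathcal L_\phi$ with the same eigenvalue, normalised so that $\int h\,d\nu=1$. A short computation then shows that $\mu_\phi := h\,\nu$ is shift-invariant, and the fluctuation bound applied to cylinder measures gives exactly the two-sided estimate
\[ K^{-1} \le \frac{\mu_\phi([a_1\cdots a_n])}{\exp(S_n\phi(x) - P(\phi)n)} \le K, \]
so $\mu_\phi$ is a Gibbs state. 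Uniqueness among shift-invariant Gibbs states comes from the observation that any two Gibbs states are mutually absolutely continuous with uniformly bounded Radon--Nikodym derivatives, combined with the ergodicity of any Gibbs state (which itself follows from the Gibbs property applied along cylinders meeting the tail $\sigma$-algebra).

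For the equilibrium-state claim, assume $\int\phi\,d\mu_\phi>-\infty$, so that $\mu_\phi\in\mathcal M_\phi(\mathbb N^{\mathbb N},\sigma)$. Taking logarithms of the Gibbs inequality, dividing by $n$, integrating against $\mu_\phi$ and applying the Shannon--McMillan--Breiman theorem along with the Birkhoff theorem for $\phi$ yields
\[ h(\mu_\phi) + \int\phi\,d\mu_\phi = P(\phi), \]
so $\mu_\phi$ is an equilibrium state by Proposition~\ref{thermo1}. For uniqueness, suppose $\mu$ is another equilibrium state in $\mathcal M_\phi(\mathbb N^{\mathbb N},\sigma)$. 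The standard convexity argument using the strict concavity of entropy on ergodic decompositions reduces to the ergodic case; for an ergodic equilibrium state, one shows via the Gibbs property of $\mu_\phi$ and the Shannon--McMillan--Breiman theorem that $\mu$ must satisfy the same two-sided cylinder bound, hence is itself a Gibbs state, and therefore coincides with $\mu_\phi$ by the uniqueness of the latter.

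The main obstacle is the countable alphabet: the Ruelle operator is neither compact nor bounded on the usual Banach spaces, so the existence of $h$ and $\nu$ cannot be obtained from the Perron--Frobenius theorem directly. The workaround is to approximate by finite-alphabet subsystems on $\{1,\dots,N\}^{\mathbb N}$, apply the classical Ruelle theorem there, and pass to the limit using the uniform pressure bound $P(\phi)<\infty$ and the local Hölder fluctuation estimate to control tightness. Everything else—Gibbs property, invariance, ergodicity, and the equilibrium identification—proceeds essentially as in the finite case.
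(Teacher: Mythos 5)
The paper does not prove this proposition at all: it is quoted verbatim from Mauldin--Urba\'nski \cite[Theorem~2.2.9, Corollary~2.7.5]{MauUrb03}, so there is no in-paper argument to compare against. Your sketch is essentially a faithful outline of the proof in that reference (Ruelle operator, bounded distortion from local H\"older continuity, conformal eigenmeasure and eigenfunction via finite-alphabet approximation, Gibbs property, mutual absolute continuity plus ergodicity for uniqueness of the Gibbs state, and Shannon--McMillan--Breiman for the equilibrium identity), and the preliminary worry about summability is fine: for the full shift, local H\"older continuity makes $\log Z_n$ superadditive up to the distortion constant, so $P(\phi)<\infty$ forces $\sum_a\sup_{[a]}e^{\phi}<\infty$ and the operator is well defined.

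One step as written would not go through: in the uniqueness of the equilibrium state you claim that for an ergodic equilibrium state $\mu$, the Gibbs property of $\mu_\phi$ together with Shannon--McMillan--Breiman shows that $\mu$ ``must satisfy the same two-sided cylinder bound, hence is itself a Gibbs state.'' SMB and Birkhoff combined with the equilibrium identity only give the $\mu$-almost-everywhere subexponential comparison $\mu([x_1\cdots x_n])=\exp\left(S_n\phi(x)-nP(\phi)+o(n)\right)$; this is far weaker than the uniform constant $K$ over all cylinders required by the Gibbs property, and it does not even directly rule out $\mu\perp\mu_\phi$ (two mutually singular ergodic measures can both have this subexponential behaviour on their own typical cylinders). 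The actual argument in \cite{MauUrb03} identifies the Jacobian of an arbitrary equilibrium state: equality in the entropy inequality forces $\mu$ to be absolutely continuous with respect to the $e^{P(\phi)-\phi}$-conformal measure with density $h$, whence $\mu=\mu_\phi$. Replacing your SMB step with that Jacobian (or a Bowen-type counting) argument closes the gap; everything else in the proposal is sound.
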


 \subsection{Coding of the induced system}\label{ind-sec}

Consider the inducing scheme $(\mathbb N^{\mathbb N}\setminus[1],t_{\mathbb N^{\mathbb N}\setminus[1]})$
of the left shift
 $\sigma\colon\mathbb N^{\mathbb N}\to\mathbb N^{\mathbb N}$. We show that the associated induced system $\widehat\sigma\colon{\widehat{\mathbb N}}^{\mathbb N}\to{\widehat{\mathbb N}}^{\mathbb N}$
is in a natural way topologically conjugate to the full shift over an infinite alphabet. 

We introduce the empty word $\emptyset$ by the rule
 $\omega\emptyset=\omega=\emptyset \omega$ 
 for any word $\omega$ from $\mathbb N$.
For each $n\in\mathbb N$, write
$1^{n}$ for $11\cdots 1\in\mathbb N^n$, the $n$-string of $1$. We set 
$1^0=\emptyset$ for convenience. 
 We introduce an infinite alphabet
 \begin{equation}\label{ahat}\mathbb M=\left\{\bigcup_{b\in \mathbb N\setminus\{1\} }[a 1^n b]\colon a\in \mathbb N\setminus\{1\}\text{ and }n\in\mathbb N\cup\{0\}\right\},\end{equation}
 which is a collection of pairwise disjoint subsets of $\mathbb N^{\mathbb N}\setminus[1]$.
 We endow $\mathbb M$ with the discrete topology, and
introduce the countable full shift \begin{equation}\label{CMS'}{\mathbb M}^{\mathbb N}=\{(x_n)_{n=1}^\infty\colon x_n\in\mathbb M\text{ for   }n\in\mathbb N\},\end{equation}
which is the cartesian product topological space of $\mathbb M$. 
Clearly $\mathbb M^{\mathbb N}$ is topologically isomorphic to $\mathbb N^{\mathbb N}$. With a slight abuse of notation 
let $\sigma\colon{\mathbb M}^{\mathbb N}\to{\mathbb M}^{\mathbb N}$ denote the left shift. 

We define a map
$\iota\colon\mathbb M^{\mathbb N}\to{\widehat{\mathbb N}}^{\mathbb N}$ as follows. Let $(x_n)_{n=1}^\infty\in\mathbb M^{\mathbb N}$.
 By the definition of $\mathbb M$ in \eqref{ahat}, 
for every $n\in\mathbb N$ we have
    $x_n=\bigcup_{b\in \mathbb N\setminus\{1\}}[a_n1^{j_n} b]$
where $a_n\in\mathbb N\setminus\{1\}$ and $j_n\in\mathbb N\cup\{0\}$. 
We set 
   \[ \iota( (x_n)_{n=1}^\infty)\in\bigcap_{n=1}^\infty[a_11^{j_1}a_21^{j_2}\cdots a_n1^{j_n}].\]

    \begin{lemma}\label{represent-prop}The map $\iota$ is a homeomorphism, and satisfies 
    $\iota\circ\sigma=\widehat\sigma\circ\iota$.
    \end{lemma}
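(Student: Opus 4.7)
The plan is to verify the four properties (well-defined, bijective, bicontinuous, intertwining) in the order that makes the combinatorics easiest, namely: describe the natural parsing of sequences in $\widehat{\mathbb N}^{\mathbb N}$, use this parsing to define a two-sided inverse of $\iota$, then read off continuity in both directions and the conjugacy relation.

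First I would check that $\iota$ is well-defined and lands in $\widehat{\mathbb N}^{\mathbb N}$. Given $(x_n)_{n\in\mathbb N}\in\mathbb M^{\mathbb N}$ with $x_n=\bigcup_{b\neq1}[a_n1^{j_n}b]$, the cylinders $[a_11^{j_1}\cdots a_n1^{j_n}]$ are nested and have word-lengths tending to infinity, so their intersection is a single point $\iota((x_n))=a_11^{j_1}a_21^{j_2}\cdots$. Since $a_1\neq1$ this point lies in $\mathbb N^{\mathbb N}\setminus[1]$, the first-return time to $\mathbb N^{\mathbb N}\setminus[1]$ equals $j_1+1$, and $\widehat\sigma$ applied yields $a_21^{j_2}a_31^{j_3}\cdots$; iterating, the induced map is defined for all $n$, so $\iota((x_n))\in\widehat{\mathbb N}^{\mathbb N}$.

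Next I would construct the inverse. Any $z\in\widehat{\mathbb N}^{\mathbb N}$ starts with some $a_1\neq1$; because $\widehat\sigma^n z$ is defined for every $n$, the sequence $z$ must contain infinitely many entries different from $1$. Hence $z$ admits a unique parsing $z=a_11^{j_1}a_21^{j_2}\cdots$ with $a_n\in\mathbb N\setminus\{1\}$ and $j_n\in\mathbb N\cup\{0\}$, and setting $\iota^{-1}(z)=(x_n)$ with $x_n=\bigcup_{b\neq1}[a_n1^{j_n}b]$ gives a two-sided inverse of $\iota$. This shows bijectivity.

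For continuity in both directions, I would note that if two elements of $\mathbb M^{\mathbb N}$ agree on the first $N$ symbols, then their $\iota$-images agree on at least the first $N$ entries of $\mathbb N^{\mathbb N}$; conversely, if two elements of $\widehat{\mathbb N}^{\mathbb N}$ agree on a sufficiently long initial block, their parsings agree on a corresponding initial segment of $\mathbb M$-letters. This establishes that $\iota$ sends $\mathbb M$-cylinders to intersections of $\mathbb N$-cylinders with $\widehat{\mathbb N}^{\mathbb N}$ and vice versa, hence $\iota$ is a homeomorphism. Finally, the intertwining relation is immediate from the construction: applying $\sigma$ on $\mathbb M^{\mathbb N}$ drops the first $\mathbb M$-letter $x_1$, which corresponds via $\iota$ precisely to dropping the initial block $a_11^{j_1}$ of length $j_1+1$ in the $\mathbb N$-sequence, i.e., to applying $\widehat\sigma$. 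None of these steps involves a serious obstacle; the only point requiring care is verifying that the parsing used to define $\iota^{-1}$ is well-defined, which reduces to the observation that membership in $\widehat{\mathbb N}^{\mathbb N}$ forces infinitely many non-$1$ entries so that every block of $1$s between consecutive non-$1$ entries has finite length.
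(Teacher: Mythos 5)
Your proposal is correct and follows essentially the same route as the paper: the paper's proof also rests on the observation that the alphabet $\mathbb M$ partitions $\bigcup_{k=1}^\infty\{t=k\}$ and that each alphabet set is mapped by $\widehat\sigma$ bijectively onto $\mathbb N^{\mathbb N}\setminus[1]$, which is exactly the unique-parsing argument you spell out. You merely make explicit a few points the paper dismisses as clear (continuity of the inverse, and that membership in $\widehat{\mathbb N}^{\mathbb N}$ forces infinitely many non-$1$ entries), which is fine.
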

    \begin{proof} Clearly $\iota$ is continuous and injective. For every $a\in \mathbb N\setminus\{1\}$ and every $n\in \mathbb N\cup\{0\}$, the set $\bigcup_{b\in \mathbb N\setminus\{1\} }[a 1^n b]$ 
is mapped by $\widehat\sigma$ 
bijectively onto $\mathbb N^{\mathbb N}\setminus[1]$. Moreover, the collection of sets of this form defines a partition of the set 
$\bigcup_{k=1}^\infty\{t=k\}$,
namely    \[\bigcup_{k=1}^\infty\{t=k\}=\bigcup_{a\in\mathbb N\setminus\{1\}}\bigcup_{n\in\mathbb N\cup\{0\}}\bigcup_{b\in\mathbb N\setminus\{1\} }[a1^nb].\]
   All the unions are disjoint unions. It follows that $\iota(\mathbb M^{\mathbb N})=\widehat{\mathbb N}^{\mathbb N}$. The last assertion follows from the definition of $\iota.$ \end{proof}


\subsection{Level-2 LDP for the countable full shift}\label{LDP-shift}
Let $\phi\colon \mathbb N^{\mathbb N}\to\mathbb R$
be acceptable and satisfy $P(\phi)<\infty$.
We are concerned with the LDP a sequence $(\tilde\nu_n)_{n=1}^\infty$ of Borel probability measures on $\mathcal M(\mathbb N^{\mathbb N})$ given by
\begin{equation}\label{nu_n}\tilde\nu_n=\frac{1}{Z_n(\phi)}\sum_{ x\in{\rm Fix}(\sigma^n) } \exp (S_n\phi(x ))\delta_{V_n^\sigma(x) },\end{equation}
where 
$V_n^\sigma(x)\in\mathcal M(\mathbb N^{\mathbb N})$ denotes the uniform probability distribution on the orbit $(\sigma^ix)_{i=0}^{n-1}$, and $\delta_{V_n^\sigma(x) }$ denotes the Borel probability measure on $\mathcal M(\mathbb N^{\mathbb N})$ that is the unit point mass at $V_n^\sigma(x)$, and 
$Z_n(\phi)$ denotes the normalizing constant.
We introduce  
 a free energy $ F_\phi \colon \mathcal M(\mathbb N^{\mathbb N}) \to [-\infty, 0]$ by
\[ F_\phi(\mu)
=
\begin{cases}h(\mu)+\int\phi d\mu &\text{ if $\mu\in\mathcal M_\phi(\mathbb N^{\mathbb N},\sigma)$},\\
-\infty&\text{ otherwise.}\end{cases}\]
The function $-F_\phi+P(\phi)$ is a natural candidate for the rate function of this LDP.
However, this function may not be lower semicontinuous since the entropy function is not upper semicontinuous.
Hence, we take the lower semicontinuous regularization of $-F_\phi+P(\phi)$.
Define $I_{\phi}\colon\mathcal M(\mathbb N^{\mathbb N})\to[0,\infty]$ by
\begin{equation}\label{rate-def}
I_{\phi}(\mu)
=
-\inf_{\mathcal G \ni \mu}\sup_{\nu\in\mathcal G}F_\phi(\nu)+P(\phi),
\end{equation}
where the supremum is taken over all measures in an open subset $\mathcal G$ of $\mathcal M(\mathbb N^{\mathbb N})$ that contains $\mu$, and the infimum is taken over all such open subsets.
Then $I_\phi$ is lower semicontinuous and satisfies $I_\phi\leq -F_\phi+P(\phi)$.

If there is a Gibbs state for the potential $\phi$, then the LDP holds for $(\tilde\nu_n)_{n=1}^\infty$ from the result in \cite{Tak19}. Due to the existence of the neutral fixed point of the R\'enyi map $T_1$, the annealed Gauss-R\'enyi measure $\eta_p$ is not a Gibbs state for the potential $\psi$ (see Lemma~\ref{gibb}). Hence \cite{Tak19} cannot be applied to $(\mathbb N^{\mathbb N},\psi)$. Instead we apply the result in 
\cite{Tak22} on the LDP for $(\tilde\nu_n)_{n=1}^\infty$ when a Gibbs state for $\phi$ does not exist.


 Using the conjugacy $\iota$ in \S\ref{ind-sec}, we introduce
a parametrized family of {\it twisted induced potentials} 
 $\Phi_{\gamma}\colon {\mathbb M}^{\mathbb N}\to\mathbb R$ ($\gamma\in\mathbb R$) by 
 \begin{equation}\label{ind-po}\Phi_{\gamma}(\iota(x))=
 S_{t_{\mathbb N^\mathbb N\setminus[1]}(\iota(x))}\phi(\iota(x))-\gamma t_{\mathbb N^\mathbb N\setminus[1]}(\iota(x)).\end{equation} 

\begin{thm}[{\cite[Theorem~A]{Tak22}}]\label{LDP-per}Let $\phi\colon\mathbb N^{\mathbb N}\to\mathbb R$ be acceptable and satisfy $P(\phi)<\infty$. 
Suppose the twisted induced potentials $\Phi_{\gamma}\colon \mathbb M^{\mathbb N}\to\mathbb R$ $(\gamma\in\mathbb R)$ 
are locally H\"older continuous, and there exists $\gamma_0\in\mathbb R$ such that $P(\Phi_{\gamma_0})=0$. Then $(\tilde\nu_n)_{n=1}^\infty$  is exponentially tight and satisfies the LDP with the good rate function $I_\phi$.
\end{thm}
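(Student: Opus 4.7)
The plan is to establish the LDP on the induced system — where a Gibbs state is available — and then transfer it back to the original system via the return-time decomposition of periodic orbits. Concretely: since $\Phi_{\gamma_0}\colon\mathbb M^{\mathbb N}\to\mathbb R$ is locally H\"older continuous and $P(\Phi_{\gamma_0})=0$, Proposition~\ref{thermo2} supplies a unique shift-invariant Gibbs state $\mu_{\Phi_{\gamma_0}}$ on $\mathbb M^{\mathbb N}$. In the Gibbs setting on a countable full shift, the level-2 LDP for empirical measures on periodic orbits weighted by $\exp S_k\Phi_{\gamma_0}$ is the main result of \cite{Tak19}, with good rate function obtained from the free energy associated with $\Phi_{\gamma_0}$. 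Via the conjugacy $\iota$ of Lemma~\ref{represent-prop} this provides the LDP for the analogous weighted empirical measures on ${\rm Fix}(\widehat\sigma^k)$ inside $\mathcal M(\widehat{\mathbb N}^{\mathbb N})$.

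To descend to $\sigma$, I would use the following bijection: every $x\in{\rm Fix}(\sigma^n)\setminus\{1^\infty\}$ is determined by a periodic point $\widehat x\in{\rm Fix}(\widehat\sigma^k)$ of the induced map together with a composition $n_1+\cdots+n_k=n$ recording the return times to $\mathbb N^{\mathbb N}\setminus[1]$ along one period. Setting $\Phi_0=S_{t_{\mathbb N^{\mathbb N}\setminus[1]}}\phi$, so that $\Phi_0=\Phi_{\gamma_0}+\gamma_0 t_{\mathbb N^{\mathbb N}\setminus[1]}$, one has the clean factorisation $S_n\phi(x)=S_k\Phi_{\gamma_0}(\widehat x)+\gamma_0 n$. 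The zero-pressure condition thereby fixes $\gamma_0=P(\phi)$, and summing over all compositions of $n$ rewrites $Z_n(\phi)$ and $\tilde\nu_n$ as weighted aggregates of quantities on the induced system. Since the ``inflate along return intervals'' map $\mathcal M(\widehat{\mathbb N}^{\mathbb N},\widehat\sigma)\to\mathcal M(\mathbb N^{\mathbb N},\sigma)$ coming from Kac's formula is continuous on the subset where the expected return time is finite, a Contraction-Principle type argument transfers the induced LDP into one for $\tilde\nu_n$. Exponential tightness comes from acceptability of $\phi$: the summability implicit in $P(\phi)<\infty$ forces $\tilde\nu_n$ to concentrate, on an exponential scale, on measures with uniformly small mass on cylinders $[a]$ for large $a$, which yields the required compact sets in $\mathcal M(\mathbb N^{\mathbb N})$.

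The main obstacle will be making this transfer precise despite the fact that the inducing-to-original map of invariant measures fails to be continuous exactly at measures charging the neutral fixed point $1^\infty$ — and it is precisely those measures that allow entropy to ``escape'' in the non-compact setting. As a consequence, the upper bound obtained naively from the Gibbs structure on $\mathbb M^{\mathbb N}$ delivers the unregularised functional $P(\phi)-F_\phi$, whereas the lower bound reaches only the regularisation $I_\phi$ of \eqref{rate-def}. Reconciling the two — and in particular verifying that $I_\phi$ really is the correct rate function rather than the sharper $P(\phi)-F_\phi$ — amounts to approximating any target measure on $\mathbb N^{\mathbb N}$ by measures lifted from the induced system together with a controlled amount of ballast in $[1]$, and controlling the free-energy cost of the approximation via the variational principle of Proposition~\ref{thermo1}.
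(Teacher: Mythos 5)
The paper does not prove Theorem~\ref{LDP-per}; it is imported verbatim as \cite[Theorem~A]{Tak22}, so there is no in-paper proof to compare against. Judged on its own terms, your outline does follow the broad strategy of that reference: induce to $\mathbb M^{\mathbb N}$, where $\Phi_{\gamma_0}$ is locally H\"older with zero pressure so that Proposition~\ref{thermo2} and the Gibbs-state LDP of \cite{Tak19} apply, then transfer back through the return-time decomposition of periodic orbits; and the factorisation $S_n\phi(x)=S_k\Phi_{\gamma_0}(\widehat x)+\gamma_0 n$ along one period is correct. But the transfer step, which is the entire content of the theorem, is described rather than executed, and this is a genuine gap. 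The Contraction Principle (Proposition~\ref{CP}) needs a continuous map defined on the whole space, whereas the Abramov ``spreading'' map is not even defined on induced invariant measures with infinite expected return time and is discontinuous exactly where mass accumulates near the fixed point $1^\infty$ --- the very regime you need to control. So no contraction argument is available; the closed-set upper bound and the exponential tightness must be proved by hand (covering by cylinders, submultiplicativity of the weighted periodic-orbit counts, and separate estimates for the proportion of time spent in $[1]$ and in high cylinders $[a]$, using the summability $\sum_a\sup_{[a]}e^{\phi}<\infty$ that acceptability together with $P(\phi)<\infty$ does indeed give on the full shift).

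Your diagnosis of where the difficulty sits is also partly backwards. Since $I_\phi$ is by \eqref{rate-def} the lower semicontinuous regularisation of $-F_\phi+P(\phi)$, the two functionals have the same infimum over every \emph{open} set, so the lower bounds with either functional are equivalent and the regularisation costs nothing there. Conversely, an upper bound over closed sets with the unregularised $-F_\phi+P(\phi)$ would be strictly stronger than the claimed one and is generally false, because entropy is not upper semicontinuous on $\mathcal M(\mathbb N^{\mathbb N},\sigma)$. The hard, unaddressed point is precisely the upper bound with $I_\phi$: for each $\mu$ one must show $\limsup_n\frac{1}{n}\log\tilde\nu_n(B(\mu,\varepsilon))\leq\sup_{B(\mu,\varepsilon)}F_\phi-P(\phi)$ for small $\varepsilon$, and this is where the induced Gibbs state and the control of long excursions into $[1]$ actually enter. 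Finally, the identification $\gamma_0=P(\phi)$ is a consequence to be proved (via Proposition~\ref{thermo1} and the Abramov--Kac formula), not something automatically ``fixed'' by the zero-pressure condition.
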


The uniqueness of minimizer of the rate function $I_\phi$ does not follow from Theorem~\ref{LDP-per} and should be examined on a case-by-case basis. An ideal situation is that the shift-invariant Gibbs state for $\phi$ is unique, the equilibrium state for $\phi$ is unique, the minimizer of $I_\phi$ is unique, and all these three coincide. However this is not always the case.
Under the hypothesis of Theorem~\ref{LDP-per}, by virtue of Proposition~\ref{thermo2} there exists a unique Gibbs state for the potential $\phi$. If moreover $\phi$ is integrable against the Gibbs state, then it is the unique equilibrium state for $\phi$, and clearly is a minimizer of $I_\phi$. Conversely, a minimizer of $I_\phi$ may not be an equilibrium state for $\phi$ in general: 
an example of a potential $\phi\colon\mathbb N^{\mathbb N}\to\mathbb R$ can be found in  \cite{Sar03} for which there is a Gibbs state $\mu\in\mathcal M(\mathbb N^{\mathbb N},\sigma)$ such that $I_\phi(\mu)=0$ and  $\mu$ is not an equilibrium state since $\int\phi d\mu=-\infty$.

Under additional hypothesis on the potential, one can show that 
any minimizer is an equilibrium state.
We say $\phi\colon\mathbb N^{\mathbb N}\to\mathbb R$ is {\it summable} if
$\sum_{k\in\mathbb N}\sup_{[k]}e^{\phi}$ is finite. 
If $\phi$ is summable, then $P(\phi)<\infty$.
Set \[\beta_\infty(\phi)=\inf\left\{\beta\in\mathbb R\colon \text{$\beta\phi$ is summable}\right\}.\]
\begin{prop}\label{unique-prop}
Let $\phi\colon \mathbb N^{\mathbb N}\to\mathbb R$ be uniformly continuous and
summable with $\beta_\infty(\phi)<1$. Then, any minimizer of $I_\phi$ is an equilibrium state for the potential $\phi$. \end{prop}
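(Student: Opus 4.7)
The plan is to prove upper semicontinuity of the free energy functional $F_\phi$ at any minimizer $\mu$ of $I_\phi$, which combined with the variational principle (Proposition~\ref{thermo1}) forces $F_\phi(\mu)=P(\phi)$. Since $F_\phi(\nu)\leq P(\phi)$ for all $\nu\in\mathcal M_\phi(\mathbb N^{\mathbb N},\sigma)$, it is enough to establish $F_\phi(\mu)\geq P(\phi)$; this inequality implicitly gives $\mu\in\mathcal M_\phi(\mathbb N^{\mathbb N},\sigma)$. Unfolding the definition~\eqref{rate-def} of $I_\phi$ as the lower semicontinuous regularization of $-F_\phi+P(\phi)$, the hypothesis $I_\phi(\mu)=0$ supplies a sequence $(\mu_k)_{k\in\mathbb N}\subset\mathcal M_\phi(\mathbb N^{\mathbb N},\sigma)$ converging to $\mu$ in weak* with $F_\phi(\mu_k)\to P(\phi)$, and $\mu$ is shift-invariant as the weak* limit of shift-invariant measures.

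Next I would split $F_\phi=h+\int\phi\, d\cdot$ and treat the two pieces. Since $\phi$ is uniformly continuous and bounded above by $\sup\phi<\infty$, the function $\sup\phi-\phi$ is non-negative and continuous, so the portmanteau theorem yields
\[
\limsup_{k\to\infty}\int\phi\, d\mu_k\leq\int\phi\, d\mu.
\]
The task therefore reduces to proving the entropy upper semicontinuity $\limsup_k h(\mu_k)\leq h(\mu)$. Pick $\beta\in(\beta_\infty(\phi),1)$, so that $\sum_a\exp(\sup_{[a]}\beta\phi)<\infty$ and $P(\beta\phi)<\infty$. A Gibbs (log-sum-exp) inequality applied to $n$-step refinements of the natural generator $\xi=\{[a]\}_{a\in\mathbb N}$ gives the a priori bound $h(\nu)+\beta\int\phi\, d\nu\leq P(\beta\phi)$ for every $\nu\in\mathcal M(\mathbb N^{\mathbb N},\sigma)$, which combined with $F_\phi(\mu_k)\to P(\phi)$ pins $\int\phi\, d\mu_k$ and hence $h(\mu_k)$ in bounded intervals.

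The main obstacle is the entropy upper semicontinuity itself, which is where the slack $\beta<1$ is crucial. The strategy is to truncate $\xi$ at level $N$ and estimate the tail contribution to $h(\mu_k)$: the terms $-\mu_k([a])\log\mu_k([a])$ with $a>N$ are dominated, via Young's inequality with weights $\sup_{[a]}\beta\phi$, by a quantity whose sum is controlled by $\log\sum_{a'>N}\exp(\sup_{[a']}\beta\phi)$ plus a residual $-(1-\beta)\int_{\cup_{a>N}[a]}\phi\, d\mu_k$ absorbed by the uniform lower bound on $\int\phi\, d\mu_k$. This makes the tail vanish uniformly in $k$ as $N\to\infty$. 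Meanwhile the head entropy, depending only on finitely many cylinder masses $\mu_k([a_1\ldots a_n])$ with $a_j\leq N$, converges to the corresponding quantity for $\mu$ by weak* convergence. Promoting this one-step truncation estimate to the Kolmogorov--Sinai entropy $h(\mu)=\lim_n\frac{1}{n}H_\mu(\xi^n)$ through the refinements $\xi^n=\bigvee_{i=0}^{n-1}\sigma^{-i}\xi$ and exploiting shift-invariance is the delicate point, and is the technical core borrowed from \cite{Tak20}. Once established, $\limsup_k h(\mu_k)\leq h(\mu)$, hence $F_\phi(\mu)\geq\limsup_kF_\phi(\mu_k)=P(\phi)$, and $\mu$ is an equilibrium state for $\phi$.
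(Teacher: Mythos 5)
Your opening moves match the paper's: extract from the definition \eqref{rate-def} a sequence $(\mu_k)\subset\mathcal M_\phi(\mathbb N^{\mathbb N},\sigma)$ with $\mu_k\to\mu$ and $F_\phi(\mu_k)\to P(\phi)$, observe that $\sup\phi<\infty$ and continuity give $\limsup_k\int\phi\,d\mu_k\leq\int\phi\,d\mu$, and use summability of $\beta\phi$ for some $\beta<1$ to pin $\int\phi\,d\mu_k$ and $h(\mu_k)$ in bounded intervals (this last point is exactly \cite[Lemma~2.3]{Tak20}, which the paper cites). The gap is in the step you yourself flag as the "technical core": the claim $\limsup_kh(\mu_k)\leq h(\mu)$. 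This is \emph{not} what \cite{Tak20} proves, so it cannot be borrowed. The result there (\cite[Theorem~2.4]{Tak20}, after \cite[Lemma~6.5]{FJLR15}) is upper semicontinuity of the \emph{ratio} $\nu\mapsto h(\nu)/(-\int\phi\,d\nu)$, a strictly weaker statement calibrated precisely to the hypothesis $\beta_\infty(\phi)<1$. Entropy itself is not upper semicontinuous on $\mathcal M(\mathbb N^{\mathbb N},\sigma)$, even along weak*-convergent sequences with uniformly bounded entropies and uniformly bounded $-\int\phi\,d\mu_k$: mass can drift to large symbols carrying entropy roughly $\beta_\infty$ times the $(-\phi)$-integral it carries, and the limit measure sees neither. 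Your truncation estimate reflects this: the Gibbs/log-sum-exp inequality bounds the tail of the partition entropy over $\{a>N\}$ by $\beta\int_{\bigcup_{a>N}[a]}(-\phi)\,d\mu_k$ plus a term controlled by $\log\sum_{a>N}\exp(\beta\sup_{[a]}\phi)$. The uniform lower bound $\inf_k\int\phi\,d\mu_k>-\infty$ makes the first term uniformly \emph{bounded}, not uniformly \emph{small} as $N\to\infty$; that would be uniform integrability of $-\phi$ along $(\mu_k)$, which you have not established and which cannot be deduced without using the constraint $F_\phi(\mu_k)\to P(\phi)$ in an essential way. So "the tail vanishes uniformly in $k$" does not follow, and the entropy upper semicontinuity is unproven.

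The way out — and the paper's actual route — is not to separate $h$ from $\int\phi\,d\cdot$. The tail of the entropy is at most $\beta$ times the tail of $-\int\phi\,d\mu_k$, so the tail contribution to the \emph{sum} $h+\int\phi\,d\cdot$ is at most $-(1-\beta)$ times a nonnegative quantity, hence has the right sign; this is what the ratio formulation packages. Concretely, the paper splits into two cases: if $\liminf_kh(\mu_k)=0$, then along a subsequence $\int\phi\,d\mu_{k_j}\to P(\phi)$ and upper semicontinuity of the integral alone gives $F_\phi(\mu)\geq\int\phi\,d\mu\geq P(\phi)$; if $\liminf_kh(\mu_k)>0$, then $F_\phi(\mu_k)=(-\int\phi\,d\mu_k)\bigl(h(\mu_k)/(-\int\phi\,d\mu_k)-1\bigr)\to 0$ forces the ratio to tend to $1$, and \cite[Theorem~2.4]{Tak20} transfers this to $h(\mu)\geq-\int\phi\,d\mu$, i.e.\ $F_\phi(\mu)\geq 0=P(\phi)$. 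Note that neither case proves, nor needs, $\limsup_kh(\mu_k)\leq h(\mu)$. If you want to keep your architecture, you must either prove the ratio upper semicontinuity (essentially redoing \cite[Theorem~2.4]{Tak20}) or combine your tail estimate with the constraint $F_\phi(\mu_k)\to P(\phi)$ before taking $N\to\infty$; as written, the argument does not close.
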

A proof of this proposition is briefly outline as follows.
By the definition \eqref{rate-def}, if $\mu$ is a minimizer of $I_\phi$ then 
there is a sequence $(\mu_k)_{k=1}^\infty$ in $\mathcal M_\phi(\mathbb N^{\mathbb N},\sigma)$ that converges to $\mu$ in the weak* topology with
$\lim_k F_\phi(\mu_k)=0$. 
Based on this information we show that $\mu$ is an equilibrium state for $\phi$.
The case $\lim_kh(\mu_k)=0$ is easy to handle, while the case $\lim_kh(\mu_k)=\infty$ (and hence $\lim_k\int\phi d\mu_k\to-\infty$)
requires attention. A key ingredient in the latter case is the upper semicontinuity of the map
$\mu_k\mapsto h(\mu_k)/(-\int\phi d\mu_k)$, as proved in \cite[Theorem~2.4]{Tak20} inspired by \cite[Lemma~6.5]{FJLR15}. 
\begin{proof}[Proof of Proposition~\ref{unique-prop}] The following proof is almost a repetition of the proof of \cite[Theorem~2.1]{Tak20} for the reader's convenience.   Considering $\phi-P(\phi)$ instead of $\phi$, we may assume $P(\phi)=0$. 
Let $\mu\in\mathcal M(\mathbb N^{\mathbb N},\sigma)$ be a minimizer of $I_\phi$. Since 
$\mathcal M(\mathbb N^{\mathbb N},\sigma)$ is a closed subset of
$\mathcal M(\mathbb N^{\mathbb N},\sigma)$, 
$\mu$ is shift-invariant. By the definition \eqref{rate-def}, there is a sequence $(\mu_k)_{k=1}^\infty$ in $\mathcal M_\phi(\mathbb N^{\mathbb N},\sigma)$ that converges to $\mu$ in the weak* topology with
$\lim_k F_\phi(\mu_k)=0$. By \cite[Lemma~2.3]{Tak20}, we have $\inf_k\int\phi d\mu_k>-\infty$.
By this and $\sup\phi<\infty$, a simple upper semicontinuity argument as in \cite[Remark~2.5]{Tak20} shows $\int\phi d\mu>-\infty$.
If $\liminf_kh(\mu_k)=0$, then for any subsequence $(\mu_{k_j})_{j=1}^\infty$
with $\lim_jh(\mu_{k_j})=0$ we have
\[0=\lim_{j\to\infty}F_\phi(\mu_{k_j})\leq
\int\phi  d\mu\leq h(\mu)+\int\phi d\mu=F_\phi(\mu).\]
Since $F_\phi(\mu)\leq P(\phi)=0$, 
 $\mu$ is an equilibrium state for $\phi$.
If $\liminf_kh(\mu_k)>0$, then we have $\liminf_k(-\int\phi d\mu_k)>0$ and \[0=\lim_{k\to\infty}F_\phi(\mu_k)=\lim_{k\to\infty}\left(
-\int\phi d\mu_k\right)\left(\frac{h(\mu_k)}{-\int\phi d \mu_k}-1\right).\]
It follows that
\[\lim_{k\to\infty}\left(\frac{h(\mu_k)}{-\int\phi d\mu_k}-1\right)=0.\]
We have $-\int\phi d\mu\geq h(\mu)$.
If $-\int\phi d\mu=0$, then clearly $\mu$ is an equilibrium state for $\phi$.
If $-\int\phi d\mu>0$, then by \cite[Theorem~2.4]{Tak20} we have
\[\frac{h(\mu)}{-\int\phi d\mu}-1\geq0,\]
namely $F_\phi(\mu)\geq0$. Since $F_\phi(\mu)\leq0$, $\mu$ is an equilibrium state for $\phi$. The proof of Proposition~\ref{unique-prop} is complete. \end{proof}

\subsection{Symbolic coding of the Gauss-R\'enyi map}\label{coding}
The next proposition allows us to introduce a symbolic representation of the Gauss-R\'enyi map.

\begin{prop}\label{CF-unique} The following statements hold.
\begin{itemize}
\item[(a)] For every $(a_n)_{n\in\mathbb N}\in\mathbb N^{\mathbb N}$ we have
$\bigcap_{n=1}^\infty\varDelta(a_1\cdots a_n)=\{(\omega,x)\}\subset\Lambda$, where
 $\omega_n\equiv a_n\mod 2$, $C_n=(a_n+\omega_n)/2+\omega_{n+1}$ and
\[x=\omega_1+\confrac{(-1)^{\omega_1}}{C_1}  +\confrac{(-1)^{\omega_{2}}}{C_{2}}+\confrac{(-1)^{\omega_{3}}}{C_{3}}+\cdots.\]
\item[(b)] For every $(\omega,x)\in\Lambda$ we have $\{(\omega,x)\}=\bigcap_{n=1}^\infty\varDelta(a_1\cdots a_n)$, where
$a_n=2C_n(\omega,x)+\omega_n-2\omega_{n+1}$.

\end{itemize}
\end{prop}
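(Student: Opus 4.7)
The plan is to treat (b) as a direct unfolding of definitions and (a) by explicit inversion of the continued-fraction algorithm together with the uniform decay of cylinders. Together the two parts exhibit the map $(\omega, x) \mapsto (a_n(\omega, x))_{n \in \mathbb N}$ as a bijection between $\Lambda$ and $\mathbb N^{\mathbb N}$, with inverse given by the random continued fraction.

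For (b), since $a_n(\omega, x) = a_1(R^{n-1}(\omega, x))$ and $C_n(\omega, x) = C(R^{n-1}(\omega, x)) + \omega_{n+1}$, the stated identity reduces to the case $n = 1$, which I would verify by a direct case analysis on $\omega_1$. If $\omega_1 = 0$ and $x \in (2/(k+2), 2/k]$ with $k$ even, then $C(\omega, x) = \lfloor 1/x \rfloor = k/2$, so $C_1 = k/2 + \omega_2$ and the formula reproduces $a_1 = k$; the case $\omega_1 = 1$, with $x \in [(k-1)/(k+1), (k+1)/(k+3))$ and $k$ odd, is symmetric and yields $\lfloor 1/(1-x) \rfloor = (k+1)/2$. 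The uniqueness of $(\omega, x)$ as the sole point of $\bigcap_n \varDelta(a_1 \cdots a_n)$ will be furnished by part (a).

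For (a), given $(a_n) \in \mathbb N^{\mathbb N}$, I define $\omega$ and $C_n$ as in the statement and first verify by parity case analysis that $(-1)^{\omega_{n+1}} + C_n \geq 1$ for every $n$, so that Lemma~\ref{IFS-new} applies and the continued fraction converges to some $y \in [0, 1]$. Writing $y^{(n)}$ for the tail continued fraction beginning at index $n$, the Perron-type recursion $y^{(n)} = \omega_n + (-1)^{\omega_n}/(C_n + y^{(n+1)} - \omega_{n+1})$ combined with the substitution $C_n = (a_n + \omega_n)/2 + \omega_{n+1}$ yields, after computing the floor, the identity $T_{\omega_n} y^{(n)} = y^{(n+1)}$, and hence $T_\omega^{n-1} y = y^{(n)}$ by induction. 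I would then check $y^{(n)} \in [0, 1)$ for every $n$ and $y^{(n)} > 0$ whenever $\omega_n = 0$: this places $y^{(n)}$ inside the half-open interval of \eqref{a1-def} corresponding to $a_n$, and simultaneously shows $(\omega_n, y^{(n)}) \notin E$, giving $(\omega, y) \in \Lambda$ with $a_n(\omega, y) = a_n$ for every $n$.

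Uniqueness of the point of $\bigcap_n \varDelta(a_1 \cdots a_n)$ is then immediate: the first coordinate is forced by $\omega_n \equiv a_n \pmod{2}$, while the second lies in $J(a_1 \cdots a_n)$, whose length tends to $0$ by Lemma~\ref{u-decay}. The anticipated main obstacle is ruling out the boundary value $y^{(n+1)} = 1$, which would pull $y^{(n)}$ to a forbidden endpoint and break $a_n(\omega, y) = a_n$; a short backward chase along the recursion shows that $y^{(m)} = 1$ would force $C_m + y^{(m+1)} - \omega_{m+1} = 1$, hence $y^{(m+1)} = 0$ with $\omega_{m+1} = 0$, which is impossible because $\omega_{m+1} = 0$ makes $y^{(m+1)} = 1/(\cdot) > 0$.
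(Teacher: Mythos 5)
Your architecture coincides with the paper's: you prove (b) by unfolding the relation between $a_1$, $C$ and the branch intervals and then quoting (a) for uniqueness, and (a) via Lemma~\ref{IFS-new}, the tail recursion $T_{\omega_n}y^{(n)}=y^{(n+1)}$, and uniqueness from the parity constraint on $\omega$ together with the decay of cylinders (the paper routes uniqueness through the algorithm of \S\ref{random-s} and Proposition~\ref{random-lem}, which is equivalent). Your case analysis in (b) is correct and incidentally shows that the formula in the statement should read $a_n=2C_n(\omega,x)-\omega_n-2\omega_{n+1}$, as in the paper's own proof; with the printed sign the odd case yields $k+2$ rather than $k$.

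The boundary difficulty you single out at the end of (a) is, however, not disposed of by your backward chase, and it is in fact fatal to the statement as printed. If $y^{(m)}=1$ then necessarily $\omega_m=0$ and $a_m/2+y^{(m+1)}=1$, hence $a_m=2$ and $y^{(m+1)}=0$; this does \emph{not} force $\omega_{m+1}=0$. The value $y^{(m+1)}=0$ is attained with $\omega_{m+1}=1$, precisely when $a_j=1$ for all $j\geq m+1$, since the tail $1-1/(2-1/(2-\cdots))$ converges to $0$. Concretely, for $(a_n)=(2,2,1,1,1,\dots)$ one has $\omega=(0,0,1,1,\dots)$, $C_1=1$ and $C_n=2$ for $n\geq2$, and the continued fraction converges to $y=1/2$; but $a_1(\omega,1/2)=4$, and indeed $T_0(1/2)=0$ sends $R(\omega,1/2)$ into $E$, so $(\omega,1/2)\notin\Lambda$. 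A direct computation gives $J(2\,2\,1^m)=(1/2,(m+2)/(2m+3))$, whose intersection over $m$ is empty; hence $\bigcap_n\varDelta(a_1\cdots a_n)=\emptyset$ for this sequence, and likewise for every sequence with a tail $a_m=2$, $a_j=1$ for $j>m$, with $m\geq2$ (the limit lands exactly on the excluded endpoint of the interval coding $a_{m-1}$). The paper's proof has the same gap — it passes from convergence of the continued fraction to membership in every cylinder without verifying that the limit avoids the excluded endpoints — so you were right to flag the issue; but the correct resolution is to exclude this countable family of sequences from the statement (harmless for the applications, since none of them is periodic and the family is null for any non-atomic measure), not to argue that the configuration cannot occur.
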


\begin{proof}As for (a), let $(a_n)_{n\in\mathbb N}\in\mathbb N^{\mathbb N}$. Define $(\omega_n)_{n\in\mathbb N}\in\{0,1\}^{\mathbb N}$ by
$\omega_n\equiv a_n\mod 2$, and 
$C_n=(a_n+\omega_n)/2+\omega_{n+1}$
for $n\in\mathbb N$.
Note that  $(-1)^{\omega_{n+1}}+C_{n}\geq1$ for every $n\in\mathbb N$.
 By Lemma~\ref{IFS-new}, the displayed continued fraction 
 converges to a number $x\in[0,1]$, and thus $(\omega,x)\in
 \bigcap_{n=1}^\infty\varDelta(a_1\cdots a_n)$.
 The algorithm described in \S\ref{random-s} shows $\{(\omega,x)\}=
 \bigcap_{n=1}^\infty\varDelta(a_1\cdots a_n)$.
 Since
 $R^n(\omega,x)=(\theta^n\omega,T_{\omega}^nx)$ we have
\[T_{\omega}^nx=\omega_{n+1}+\confrac{(-1)^{\omega_{n+1}}}{C_{n+1}}  +\confrac{(-1)^{\omega_{n+2}}}{C_{n+2}}+\confrac{(-1)^{\omega_{n+3}}}{C_{n+3}}+\cdots.\]
Hence $(\omega,x)\in\Lambda$ holds.

To prove (b), let $(\omega,x)\in\Lambda$.  
Define $a_n=2C_n(\omega,x)-\omega_n-2\omega_{n+1}$ for $n\in\mathbb N$. We have $(-1)^{\omega_{n+1}}+C_{n}(\omega,x)\geq1$ for every $n\in\mathbb N$.
Proposition~\ref{random-lem}(a) gives \[x=\omega_1+\confrac{(-1)^{\omega_1}}{C_1(\omega,x)}  +\confrac{(-1)^{\omega_{2}}}{C_{2}(\omega,x)}+\confrac{(-1)^{\omega_3}}{C_3(\omega,x)}+\cdots,\]
which implies $(\omega,x)\in\bigcap_{n=1}^\infty\varDelta(a_1\cdots a_n)$. Proposition~\ref{CF-unique}(a) yields $\{(\omega,x)\}=\bigcap_{n=1}^\infty\varDelta(a_1\cdots a_n)$.
\end{proof}

Define a {\it coding map}
$\pi\colon\mathbb N^{\mathbb N}\to\Lambda$ by
\begin{equation}\label{code-def}\pi((z_n)_{n=1}^\infty)\in\bigcap_{n=1}^\infty\varDelta(z_1\cdots z_n).\end{equation}
By Proposition~\ref{CF-unique}, $\pi$ is well-defined and surjective.
 Obviously $\pi$ is continuous, injective and satisfies $R\circ \pi=\pi\circ\sigma$. It is not hard to show that $\pi$ maps Borel sets to Borel sets.
 We set
 \begin{equation}\label{P-def}\eta_p=(m_p\otimes\lambda_p)\circ\pi,\end{equation}
 and call $\eta_p$ the {\it annealed Gauss-R\'enyi measure}.
 From (b) and (c) in Proposition~\ref{random-lem}, we have $\Lambda_\omega=(0,1)\setminus\mathbb Q$ for every $\omega\in\Omega_0$. This implies  
$\Omega_0\times ((0,1)\setminus\mathbb Q)\subset\Lambda$, and so $(m_p\otimes\lambda_p)(\Lambda)=1$. Hence $\eta_p$ is a probability. 
The measure $m_p\otimes\lambda_p$ is $R$-invariant \cite[Theorem 3.2]{KKV17} and by \cite[Theorem~3.3]{KKV17} it is mixing. Hence
  $\eta_p$
is $\sigma$-invariant and mixing. 

By Lemma~\ref{represent-prop}, the induced system $\widehat\sigma\colon{\widehat{\mathbb N}}^{\mathbb N}\to{\widehat{\mathbb N}}^{\mathbb N}$ is topologically conjugate to $\sigma\colon\mathbb M^{\mathbb N}\to \mathbb M^{\mathbb N}$ via $\iota$.
Since $R\colon\Lambda\to\Lambda$ is topologically conjugate to $\sigma\colon\mathbb N^{\mathbb N}\to\mathbb N^{\mathbb N}$
 via $\pi$, the two induced systems $\widehat R\colon\widehat\Lambda\to\widehat\Lambda$ and  
  $\widehat\sigma\colon{\widehat{\mathbb N}}^{\mathbb N}\to{\widehat{\mathbb N}}^{\mathbb N}$
 are topologically conjugate 
 via $\pi$. 
The three dynamical systems are summarized in the following diagram.
\begin{equation}\label{diagram2}  \begin{CD}
     \mathbb M^{\mathbb N} @>{\sigma}>> \mathbb M^{\mathbb N} \\
  @V{\iota}VV    @VV{\iota}V \\
     {\widehat{\mathbb N}}^{\mathbb N}  @>\widehat\sigma>>  {\widehat{\mathbb N}}^{\mathbb N} \\
     @V{\pi}VV    @VV{\pi}V \\
     \widehat\Lambda  @>\widehat R>>  \widehat\Lambda \\
  \end{CD}\end{equation}



\subsection{Refined distortion estimates}\label{random-dist}

The distortion estimate in Lemma~\ref{dist-basic} does not suffice when $a_1\cdots a_n$ contains a long block of $1$ that contains $a_n$. The next lemma provides refined estimates in this case.
\begin{lemma}\label{Holder-d}
There exists a constant $K>0$ such that if $n\in\mathbb N$, 
$a_i=1$ for $i=1,\ldots,n$
and $a_{n+1}\neq1$ then 
for any pair $(\omega,x),(\varrho,y)$ of points in 
$\varDelta(a_1\cdots a_{n+1})$, 
\[S_n\varphi(\omega,x)-S_n\varphi(\varrho,y)\leq \begin{cases}K|T_\omega^{n}x-T_\varrho^{n}y|&\text{ if } a_{n+1}\in\mathbb N_1,\\
 K|T_\omega^nx-T_\varrho^n y|^{\frac{1}{2}}&\text{ if }a_{n+1}\in\mathbb N_0.\end{cases}\]

\end{lemma}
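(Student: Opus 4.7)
The plan is to exploit the fact that the assumption $a_i=1$ for $i=1,\ldots,n$ completely pins down the dynamics along the orbit: looking at \eqref{a1-def}, the value $a_i=1$ is impossible with $\omega_i=0$ (since $\mathbb N_0$ consists of even integers), so necessarily $\omega_i=1$ and $T_\omega^{i-1}x,T_\varrho^{i-1}y\in[0,1/2)$, where $T_1$ coincides with the first branch $T_1(u)=u/(1-u)$. Writing $u_i=T_\omega^ix$ and $v_i=T_\varrho^iy$ for $i=0,1,\ldots,n$, I would exploit the reciprocal substitution: $1/T_1(u)=1/u-1$ on $[0,1/2)$ yields the closed-form expressions
\[u_i=\frac{u_n}{1+(n-i)u_n},\qquad v_i=\frac{v_n}{1+(n-i)v_n}\qquad(i=0,\ldots,n),\]
from which a direct algebraic manipulation gives the fundamental identity
\[|u_i-v_i|=\frac{u_iv_i}{u_nv_n}\,|u_n-v_n|.\]

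With this in hand, I would sum and compare $\sum_{i=1}^n u_iv_i$ to $u_nv_n\sum_{j\geq 0}(1+jm)^{-2}$, where $m=\min(u_n,v_n)$. Bounding this geometric-type series by $1+\int_0^\infty(1+jm)^{-2}dj=1+1/m$ gives
\[\sum_{i=1}^n|u_i-v_i|\leq |u_n-v_n|\left(1+\frac{1}{m}\right).\]
Feeding this into the bound $S_n\varphi(\omega,x)-S_n\varphi(\varrho,y)\leq 2\sum_{i=1}^n|u_i-v_i|$ from Lemma~\ref{dist-basic} reduces the problem to controlling $|u_n-v_n|/m$ in terms of the geometric information coming from the extra digit $a_{n+1}$.

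A case analysis on $a_{n+1}$ then finishes the argument. If $a_{n+1}\in\mathbb N_1\setminus\{1\}$, the cylinder structure forces $u_n,v_n\in[(a_{n+1}-1)/(a_{n+1}+1),(a_{n+1}+1)/(a_{n+1}+3))\subset[1/2,1)$, so $m\geq 1/2$ and the bound immediately collapses to the desired $O(|u_n-v_n|)$. If $a_{n+1}\in\mathbb N_0$, then $u_n,v_n\in(2/(a_{n+1}+2),2/a_{n+1}]$; here I would use the elementary comparison $|u_n-v_n|\leq 4/[a_{n+1}(a_{n+1}+2)]$ together with $m\geq 2/(a_{n+1}+2)$ to derive $|u_n-v_n|\leq 2m^2$, which gives $|u_n-v_n|/m\leq\sqrt{2}\,|u_n-v_n|^{1/2}$ and hence the H\"older-type square-root bound.

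The main obstacle I expect is verifying that the harmonic-type decay produced by iteration of the parabolic branch really gives a telescoping product clean enough to yield the exact identity $|u_i-v_i|=(u_iv_i/u_nv_n)|u_n-v_n|$; once this is secured, the dichotomy between the two exponents is essentially forced by whether or not the escape digit $a_{n+1}$ lies on the parabolic side of the partition (even, forcing $u_n$ to be comparable to $\sqrt{|u_n-v_n|}$) or on the hyperbolic side (odd $\neq 1$, where uniform expansion of the escape branch keeps $u_n$ bounded away from $0$). The constant $K$ absorbs the factors $2$ in Lemma~\ref{dist-basic}, the factor $1+\sqrt{2}$ from the series estimate, and $|u_n-v_n|\leq 1$.
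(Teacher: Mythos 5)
Your proof is correct, and it takes a genuinely different route from the paper's. The paper invokes the general distortion lemma for $C^2$ maps with a neutral fixed point (Lemma~\ref{scope'}, quoted from \cite{JT}), applied to the backward-orbit intervals $J_i=[q_{i+1},q_i)$ with $T_1q_{i+1}=q_i$; this yields $S_n\varphi(\omega,x)-S_n\varphi(\varrho,y)\leq K|T_\omega^nx-T_\varrho^ny|\,|J_0|^{-1}\sum_{i=0}^{n-1}|J_i|$, and the two exponents come from estimating $|J_0|$ and $\sum_i|J_i|$ in the two cases. You instead exploit that the relevant branch of $T_1$ is exactly the linear-fractional map $u\mapsto u/(1-u)$, so that $1/u_i=1/u_n+(n-i)$ gives closed forms, the exact identity $|u_i-v_i|=(u_iv_i/u_nv_n)|u_n-v_n|$, and the explicit series bound $\sum_{i=1}^n|u_i-v_i|\leq(1+1/m)|u_n-v_n|$, which you feed into Lemma~\ref{dist-basic}. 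Your argument is elementary and self-contained (it bypasses Lemma~\ref{scope'} entirely), at the cost of being tied to this specific branch, whereas the paper's argument only uses the $C^2$ structure near the neutral fixed point and so is more robust. Your concluding case analysis checks out: $a_{n+1}\in\mathbb N_1\setminus\{1\}$ forces $m\geq1/2$ and hence the linear bound with constant $6$, while for $a_{n+1}\in\mathbb N_0$ the inequality $|u_n-v_n|\leq 4/[a_{n+1}(a_{n+1}+2)]\leq 2m^2$ holds precisely because $a_{n+1}\geq2$, giving $|u_n-v_n|/m\leq\sqrt{2}\,|u_n-v_n|^{1/2}$ and the H\"older bound with constant $2(1+\sqrt{2})$.
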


\begin{proof}
 Let $n\in\mathbb N$ and suppose
$a_i=1$ for $i=1,\ldots,n$
and $a_{n+1}\neq1$. 
For $i=0,\ldots,n$ put
\[q_i= \begin{cases}\vspace{1.3mm}\displaystyle{\frac{1}{i+2}}&\text{ if } a_{n+1}\in\mathbb N_1,\\
 \displaystyle{\frac{2}{2i+a_{n+1}}}&\text{ if }a_{n+1}\in\mathbb N_0,\end{cases}\]
 and
$J_i=\left[q_{i+1},q_i\right)$. Let
$(\omega,x),(\varrho,y)\in\varDelta(a_1\cdots a_{n+1})$. 
We have $T_1(q_{i+1})=q_i$ for $i=0,\ldots,n-1$ and 
$x,y\in J_{n-1}$. 
If $a_{n+1}\in\mathbb N_1$ then
by Lemma~\ref{scope'} below applied to $f=T_1|[0,1/2)$, there exists a uniform constant $K_1>0$ such that
\begin{equation}\label{ho-eq1}S_n\varphi(\omega,x)-S_n\varphi(\varrho,y)\leq K_1|T_\omega^{n}x-T_\varrho^{n}y|.\end{equation}
If $a_{n+1}\in\mathbb N_0$ then we have
\begin{equation}\label{Holder-d-eq1}
|J_0|=\frac{4}{a_{n+1}^2+2a_{n+1}}\ \text{ and }\ \sum_{i=0}^{n-1}|J_i|\leq \frac{2}{a_{n+1}}.
\end{equation}
By Lemma~\ref{scope'} below applied to the restriction $f=T_1|_{[0,2/a_{n+1})}$,
there exists a uniform constant $K_2>0$ such that
\[S_n\varphi(\omega,x)-S_n\varphi(\varrho,y)\leq K_2\frac{|T_\omega^nx-T_\varrho^ny|}{|J_0|}\sum_{i=0}^{n-1}
|J_i|.\]
Since
$R^n(\omega,x),R^n(\varrho,y)\in\varDelta(a_{n+1})$, the points
$T_\omega^nx$, $T_\varrho^ny$ belong to the closure of $J_0$, and thus
$|T_\omega^nx-T_\varrho^ny|/|J_0|\leq1$.
By this and \eqref{Holder-d-eq1},
\begin{equation}\label{ho-eq2}\begin{split}S_n\varphi(\omega,x)-S_n\varphi(\varrho,y)&\leq K_2\frac{|T_\omega^nx-T_\varrho^ny|}{|J_0|}\sum_{i=0}^{n-1}
|J_i|\\
&\leq K_2
\frac{|T_\omega^nx-T_\varrho^ny|^{\frac{1}{2}}}{|J_0|^{\frac{1}{2}} }\sum_{i=0}^{n-1}
|J_i|\\
&\leq K_2\frac{\sqrt{a_{n+1}^2+2a_{n+1}}}{a_{n+1}}|T_\omega^nx-T_\varrho^ny|^{\frac{1}{2}}\\
&\leq\sqrt{2}K_2|T_\omega^nx-T_\varrho^ny|^{\frac{1}{2}}.\end{split}\end{equation}
By \eqref{ho-eq1} and \eqref{ho-eq2}, taking $K=\max\{K_1,\sqrt{2}K_2\}$ yields the desired inequalities.
\end{proof}

The next general lemma on distortions for iterations of an interval map with a neutral fixed point was shown in the proof of \cite[Lemma~5.3]{JT}.

\begin{lemma}[{cf. \cite[Lemma~5.3]{JT}}]
\label{scope'}
Let $r>0$ and let $f\colon[0,r)\to \mathbb R$ be a $C^{2}$ map satisfying
$f0=0$, $f'0=1$ and $f'x>1$ for all $x\in(0,r)$. There exists a constant $K>0$ such that
for every $n\in\mathbb N$ and any pair $x,y$ of points in $J_{n-1}$,
\[\log\frac{|(f^n)'y|}{|(f^n)'x|}\leq 
K|f^nx-f^ny|\sum_{i=0}^{n-1}\frac{|J_{i}|}{|J_0|},\]
where 
$q_0=r$, $fq_{i+1}=q_{i}$ and $J_i=[q_{i+1},q_i)$ for $i=0,\ldots, n-1$.
\end{lemma}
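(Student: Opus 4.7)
My plan is to combine the chain rule with a cheap bounded-distortion estimate that avoids the appearance of circularity by exploiting the fact that the intervals $J_i$ are pairwise disjoint subsets of $(0,r)$, so their lengths sum to at most $r$.

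First I would apply the chain rule and the mean value theorem to $\log|f'|$. Since $f$ is $C^2$ with $f'\geq 1$ on $[0,r)$, the quantity $M := \sup_{(0,r)}|f''|/|f'|$ is finite (in the applications $f$ extends real-analytically across $r$, so this is automatic), and for any $i\geq 0$ one has
\[
\bigl|\log|f'(f^iy)| - \log|f'(f^ix)|\bigr| \leq M\,|f^iy-f^ix|.
\]
Summing yields
\[
\log\frac{|(f^n)'y|}{|(f^n)'x|} \leq M\sum_{i=0}^{n-1}|f^iy-f^ix|.
\]

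Next I would establish a \emph{global} bounded-distortion estimate: for any $k,\ell\in\mathbb N$ with $\ell\geq k-1$ and any $\xi,\xi'\in J_{\ell}$, the trivial bound $|f^j\xi-f^j\xi'|\leq |J_{\ell-j}|$ (valid because $f^j\xi,f^j\xi'\in J_{\ell-j}$, which is an interval) combined with the previous step gives
\[
\log\frac{|(f^k)'\xi|}{|(f^k)'\xi'|} \leq M\sum_{j=0}^{k-1}|J_{\ell-j}| \leq M\sum_{m=0}^{\infty}|J_m| \leq Mr.
\]
The crucial point, and what makes this step non-circular, is that we use only the trivial diameter bound $|f^j\xi-f^j\xi'|\leq|J_{\ell-j}|$, not a refined one. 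With this in hand I would compare $|f^iy-f^ix|$ to $|J_{n-1-i}|$ via the monotone bijection $f^{n-i}\colon J_{n-1-i}\to f(J_0)$. Two applications of the mean value theorem produce $\xi_i$ between $f^ix,f^iy$ and $\xi'_i\in J_{n-1-i}$ with $|f^ny-f^nx|=|(f^{n-i})'\xi_i|\cdot|f^iy-f^ix|$ and $|f(J_0)|=|(f^{n-i})'\xi'_i|\cdot|J_{n-1-i}|$. Both $\xi_i,\xi'_i$ lie in $J_{n-1-i}$, so the distortion bound gives $|(f^{n-i})'\xi_i|\geq e^{-Mr}|(f^{n-i})'\xi'_i|$, whence
\[
|f^iy-f^ix| \leq e^{Mr}\,\frac{|J_{n-1-i}|}{|f(J_0)|}\,|f^ny-f^nx|.
\]

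Summing over $i$ and using $\sum_{i=0}^{n-1}|J_{n-1-i}|=\sum_{i=0}^{n-1}|J_i|$ yields the claim with $K := Me^{Mr}|J_0|/|f(J_0)|$, a constant depending only on $f$. The principal technical worry is the apparent circularity of the bounded-distortion step; the whole argument hinges on the observation that the crudest possible distance bound, summed against the finite total length $\sum_m|J_m|\leq r$, is already enough to yield a uniform distortion constant over arbitrarily many iterates, which then powers the rest of the proof.
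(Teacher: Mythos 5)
Your argument is correct, and it is the standard bounded-distortion argument for the tower of intervals over a neutral fixed point; the paper itself gives no proof of this lemma but simply cites the proof of \cite[Lemma~5.3]{JT}, so there is nothing in the text to compare against line by line. The three-step structure (chain rule plus mean value theorem for $\log|f'|$; the a priori distortion bound $Mr$ obtained from the crude estimate $|f^j\xi-f^j\xi'|\le|J_{\ell-j}|$ and $\sum_m|J_m|=q_0-\lim_m q_m=r$; then the comparison $|f^iy-f^ix|\le e^{Mr}|J_{n-1-i}|\,|f^ny-f^nx|/|f(J_0)|$ via two applications of the mean value theorem) is exactly the expected route, and your worry about circularity is correctly resolved: the crude bound feeds a uniform constant into the refined one.

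Two small points. First, the finiteness of $M=\sup_{(0,r)}|f''|/|f'|$ does not follow from $f$ being $C^2$ on the half-open interval $[0,r)$ alone, so strictly speaking you are using a hypothesis not literally in the statement; you flag this, and it is harmless because in the paper's only application ($f$ a restriction of $T_1$, for which R\'enyi's condition \eqref{ren} and the boundedness of $|T_1'|$ on the relevant branch give a uniform $M$) it holds automatically. Second, your constant $K=Me^{Mr}|J_0|/|f(J_0)|$ involves $|f(J_0)|$, which is only meaningful if $f$ extends (with finite limit) to $x=r$; this is easily avoided by running the last comparison with $f^{n-i-1}\colon J_{n-1-i}\to J_0$ instead of $f^{n-i}\colon J_{n-1-i}\to f(J_0)$ and using $f'\ge1$, which yields $|f^iy-f^ix|\le e^{Mr}\bigl(|J_{n-1-i}|/|J_0|\bigr)|f^ny-f^nx|$ and hence the cleaner constant $K=Me^{Mr}$, depending only on $M$ and $r$ --- a form that is also convenient for the uniformity over $a_{n+1}$ required in Lemma~\ref{Holder-d}. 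Neither point affects the validity of your proof.
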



We now proceed to distortion estimates of an induced potential.
 Notice that 
\[\widehat\Lambda=(\Lambda\setminus\varDelta(1))\setminus \bigcup_{n=1}^\infty R^{-n}((1^\infty,0)).\]
 Define an {\it induced annealed geometric potential} $\widehat\varphi\colon\widehat\Lambda\to\mathbb R$ by 
\[\widehat\varphi(\omega,x)=S_{t(\omega,x)}\varphi(\omega,x).\]
For a pair
$(\omega,x),(\varrho,y)$ of distinct points in $\widehat\Lambda$ contained in the same $1$-cylinder, we introduce their {\it separation time}
\[s((\omega,x),(\varrho,y))=\min\{n\geq1\colon a_1(\widehat R^n(\omega,x))\neq a_1(\widehat R^n(\varrho,y))\}.\]
Note that $s((\omega,x),(\varrho,y))\geq2$ implies $t(\omega,x)=t(\varrho,y)$.
We evaluate the quantity
\[\widehat\varphi(\omega,x)-\widehat\varphi(\varrho,y)=\log\frac{|(T_\omega^{t(\omega,x)})'y|}{|(T_\omega^{t(\omega,x)})'x|}.\]
\begin{lemma}\label{c-dist}
There exist constants $K>0$ and $\tau\in(0,1)$ such that 
 for any pair $(\omega,x),(\varrho,y)$ of points in $\widehat\Lambda$ with $s((\omega,x),(\varrho,y))\geq2$, 
\[\widehat\varphi(\omega,x)-\widehat\varphi(\varrho,y)\leq K\tau^{s((x,\omega),(\varrho,y))}.\]
\end{lemma}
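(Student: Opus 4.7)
The plan is to combine the Hölder distortion estimate of Lemma~\ref{Holder-d} with the uniform expansion of $\widehat R^2$ established in Lemma~\ref{3-exp}. The starting point is to reduce the required bound to an estimate on $|T_\omega^t x - T_\omega^t y|$, and then to produce exponential decay of the latter in $s$.

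First I would unpack the hypothesis. By the note immediately preceding the statement, $s \geq 2$ combined with the ``same $1$-cylinder'' hypothesis forces $t := t(\omega,x) = t(\varrho,y)$ and $a_i(\omega,x) = a_i(\varrho,y)$ for $i = 1, \ldots, t+1$. In particular $\omega_i = \varrho_i$ for $i = 1, \ldots, t$, so $T_\omega^t$ and $T_\varrho^t$ coincide on the common cylinder $\varDelta(a_1 1^{t-1} a_{t+1})$, and the $\log p(\omega_1)$ terms in $\varphi$ cancel, leaving
\[\widehat\varphi(\omega,x) - \widehat\varphi(\varrho,y) = \log\frac{|(T_\omega^t)'y|}{|(T_\omega^t)'x|}.\]

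Next I would extract a bound of the form $K_1|T_\omega^t x - T_\omega^t y|^{1/2}$ for this log-ratio. Decompose $T_\omega^t = T_1^{t-1} \circ T_{\omega_1}$. R\'enyi's condition~\eqref{ren} applied to $T_{\omega_1}$ gives $|\log|T_{\omega_1}'y| - \log|T_{\omega_1}'x|| \leq 2|T_{\omega_1}x - T_{\omega_1}y|$, and since $|T_1'|\geq 1$ this is at most $2|T_\omega^t x - T_\omega^t y|$. For the remaining tail, the points $R(\omega,x)$ and $R(\varrho,y)$ lie in $\varDelta(1^{t-1}a_{t+1})$, which is exactly the setup of Lemma~\ref{Holder-d} with $n = t-1$; the worst case $a_{t+1}\in\mathbb N_0$ yields $|S_{t-1}\varphi(R(\omega,x)) - S_{t-1}\varphi(R(\varrho,y))| \leq K_0 |T_\omega^t x - T_\omega^t y|^{1/2}$. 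Adding the two contributions and using $|T_\omega^t x - T_\omega^t y|\leq 1$ completes this step.

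Finally I would derive exponential decay of $|T_\omega^t x - T_\omega^t y|$ in $s$. By the definition of the separation time, for every $j = 1, \ldots, s-1$ the points $\widehat R^j(\omega,x)$ and $\widehat R^j(\varrho,y)$ share the same first $R$-digit, hence their $[0,1]$-coordinates sit inside a common $1$-cylinder of $R$. Pulling this containment backward through $s-1$ iterations of $\widehat R$ and invoking Lemma~\ref{3-exp} at each pair of consecutive $\widehat R$-iterations (each pair contributing a multiplicative factor $9/4$ to the derivative), one arrives at
\[|T_\omega^t x - T_\omega^t y| \leq C (9/4)^{-\lfloor (s-1)/2\rfloor},\]
and the choice $\tau = (9/4)^{-1/4}$ with a suitable $K$ yields the claim. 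The main obstacle will be the clean execution of this last step: the two orbits can have distinct return times $t_j^x \neq t_j^y$ beyond the first block, so their $R$-time indices desynchronize and Lemma~\ref{3-exp} does not directly apply to a single ambient cylinder containing both trajectories. Handling this requires a careful orbitwise application of the expansion along each trajectory, combined with the common-cylinder containment of the $[0,1]$-coordinates at each $\widehat R$-level.
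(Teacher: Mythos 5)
Your proposal follows essentially the same route as the paper: bound the first-step distortion by $C|T_\omega^t x-T_\omega^t y|$, bound the excursion through $\varDelta(1)$ by $C|T_\omega^t x-T_\omega^t y|^{1/2}$ via Lemma~\ref{Holder-d}, and then contract $|T_\omega^t x-T_\omega^t y|$ exponentially in $s$ using the mean value theorem and Lemma~\ref{3-exp}, arriving at the same $\tau=(4/9)^{1/4}$. The desynchronization you flag at the end is not actually an obstacle: the separation time is the symbolic separation time of the induced system coded by $\mathbb M$, whose symbols record both the leading digit and the length of the ensuing block of $1$s, so $s((\omega,x),(\varrho,y))\geq n$ forces $t(\widehat R^j(\omega,x))=t(\widehat R^j(\varrho,y))$ for $j=0,\dots,n-1$ and the two orbits remain synchronized throughout the estimate.
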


\begin{proof}
For $(\omega,x),(\varrho,y)\in\widehat\Lambda$ as in the statement, put 
\[k=\min\{i\geq1\colon R^i(\omega,x)\in\varDelta(1)\}
\text{ and }  n=t(\omega,x),\] and decompose $R^n=R^{n-k}\circ R^k$. We estimate contributions from the first $k$ iteration  and the remaining $n-k$ iteration separately.
Lemma~\ref{dist-basic} gives
 \begin{equation}\label{c-dist-eq1}S_k\varphi(\omega,x)-
S_k\varphi(\varrho,y)\leq 2|T^k_\omega x-T^k_\varrho y|\ \text{ if  $k=1$}.
\end{equation}
By Lemma~\ref{dist-basic} and Lemma~\ref{3-exp},
\begin{equation}\label{c-dist-eq2}\begin{split}S_k\varphi(\omega,x)-
S_k\varphi(\varrho,y)&\leq 2\sum_{i=1}^k|T^i_\omega x-T^i_\varrho y|
\\
&\leq2\left(1+\sum_{i=1}^{k-1}\left(\frac{4}{9}\right)^{\lfloor(k-i)/2\rfloor}\right)|T^k_\omega x-T^k_\varrho y|\ \text{ if $k>1$}.\end{split}
\end{equation}
Put
$\tau=(4/9)^{\frac{1}{4}}\in(0,1)$ and
$K_0=2\left(1+\sum_{i=0}^{\infty}(4/9)^{\lfloor i/2\rfloor}\right)$.
 By the mean value theorem, 
 there exists 
 $(\theta^n\omega,z)\in \varDelta(a_{n+1}(\omega,x))$
such that
 \[ \begin{split}| T_{\omega}^kx-T_{\omega}^ky|&\leq
 |T_{\omega}^nx-T_{\omega}^ny|\\
 &=\frac{
|T_{\omega}^{\sum_{i=0}^{s((\omega,x),(\varrho,y))-1} t(\widehat R^i(\omega,x))}x-T_{\omega}^{\sum_{i=0}^{s((\omega,x),(\varrho,y))-1} t(\widehat R^i(\omega,x))}y|}{|(T_{\theta^n\omega}^{\sum_{i=0}^{s((\omega,x),(\varrho,y))-1} t(\widehat R^i(\omega,x))-n})'z|}.\end{split}\]
By Lemma~\ref{3-exp},
there exists a uniform constant $K_1>0$ such that
\begin{equation}\label{expansion-eq}|T_{\omega}^nx-T_{\omega}^ny|\leq \frac{1}{|(T_{\theta\omega}^{\sum_{i=0}^{s((\omega,x),(\varrho,y))-1} t(\widehat R^i(\omega,x))-n})'z|}\leq K_1\tau^{ 2s((\omega,x),(\varrho,y)) }.
\end{equation}
By Lemma~\ref{Holder-d}, there exists a uniform constant $K_2>0$ such that
\begin{equation}\label{expan-eq2}|S_{n-k}\varphi(R^k(\omega,x))-S_{ n-k }\varphi( R^k(\varrho,y))|\leq K_2|T_\omega^nx-T_\varrho^ny|^{\frac{1}{2}}.\end{equation}
Combining \eqref{c-dist-eq1}, \eqref{c-dist-eq2}, \eqref{expansion-eq} and \eqref{expan-eq2} we obtain
\[\begin{split}\widehat\varphi(\omega,x)-\widehat\varphi(\varrho,y)&=S_n\varphi(\omega,x)-S_n\varphi(\varrho,y)\\
&\leq |S_k\varphi(\omega,x)-S_k\varphi(\varrho,y)|+|S_{n-k}\varphi(R^k(\omega,x))-S_{n-k}\varphi(R^k(\varrho,y))|\\
&\leq K_0K_1\tau^{2s((\omega,x),(\varrho,y)) }+K_2|T_\omega^nx-T_\varrho^ny|^{\frac{1}{2}}\\
&\leq (K_0K_1+K_2\sqrt{K_1})\tau^{s((\omega,x),(\varrho,y))}.\end{split}\]
Setting $K=K_0K_1+K_2\sqrt{K_1}$ yields the desired inequality.
\end{proof}

For each $n\in\mathbb N$ define 
\[V_n(\widehat\varphi)=\sup\{\widehat\varphi(\omega,x)-\widehat\varphi(\varrho,y)\colon (\omega,x),(\varrho,y)\in\widehat\Lambda,\ s((\omega,x),(\varrho,y))\geq n\}.\]

 \begin{cor}\label{sum-cor}
 There exist constants $K>0$ and $\gamma\in(0,1)$ such that for every $n\geq1$ we have $V_n(\widehat\varphi)\leq K\gamma^n$.\end{cor}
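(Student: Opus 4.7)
The plan is to derive Corollary~\ref{sum-cor} from Lemma~\ref{c-dist} on the range $n \geq 2$ and to handle $n = 1$ separately. If $s((\omega, x), (\varrho, y)) \geq n \geq 2$, then in particular $s \geq 2$, and Lemma~\ref{c-dist} gives $\widehat\varphi(\omega, x) - \widehat\varphi(\varrho, y) \leq K_0 \tau^s \leq K_0 \tau^n$ for the constants $K_0, \tau$ from that lemma. Passing to the supremum yields $V_n(\widehat\varphi) \leq K_0 \tau^n$ for all $n \geq 2$ at once.

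The remaining task is to prove $V_1(\widehat\varphi) < \infty$. Any two distinct points of $\widehat\Lambda$ in the same $1$-cylinder of $\widehat R$ lie in a common $\varDelta(a 1^j)$ with $a \in \mathbb N \setminus \{1\}$ and $j \geq 0$, and share the first return time $t = j+1$. I would split
\[
\widehat\varphi(\omega, x) - \widehat\varphi(\varrho, y) = \log\frac{|T'_{\omega_1} y|}{|T'_{\omega_1} x|} + \log\frac{|(T_1^{j})'(T_\omega y)|}{|(T_1^{j})'(T_\omega x)|},
\]
where cancellation of the $\log p(\omega_i)$ terms uses $\omega_i = \varrho_i$ for $i = 1, \ldots, j+1$. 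The first summand is at most $2$ by Lemma~\ref{dist-basic}. For the second, the constraint $t = j+1$ forces $T_\omega x, T_\varrho y \in J(1^j) \setminus J(1^{j+1})$, which coincides with the interval $J_{j-1} = [1/(j+2), 1/(j+1))$ arising in Lemma~\ref{scope'} applied to $f = T_1|_{[0, 1/2)}$ (so $q_i = 1/(i+2)$ and $|J_i| = 1/((i+2)(i+3))$). Lemma~\ref{scope'} then bounds the $T_1^j$-distortion by a universal constant times $|T_1^j(T_\omega x) - T_1^j(T_\varrho y)| \cdot \sum_{i=0}^{j-1} |J_i|/|J_0|$; the first factor is at most $1$, and the series telescopes to a finite limit since $\sum_{i \geq 0}(1/(i+2) - 1/(i+3)) = 1/2$. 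This yields a uniform bound $V_1(\widehat\varphi) \leq M < \infty$.

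Choosing $\gamma = \tau$ and $K = \max\{K_0, M/\tau\}$ then gives $V_n(\widehat\varphi) \leq K \gamma^n$ for every $n \geq 1$. The main obstacle is precisely the $n = 1$ bound: the exponential decay produced by Lemma~\ref{c-dist} relies on geometric shrinkage of the endpoint distance $|T_\omega^{j+1} x - T_\varrho^{j+1} y|$ via Lemma~\ref{3-exp}, an input available only when $s \geq 2$. For $s = 1$ this distance is merely bounded by $1$, so finiteness of $V_1(\widehat\varphi)$ has to come instead from the summability of the Lemma~\ref{scope'} series—the key quantitative ingredient that tames the neutral fixed point of $T_1$.
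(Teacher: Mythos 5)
Your treatment of $n\geq2$ is correct and is exactly what the paper does: Lemma~\ref{c-dist} gives $\widehat\varphi(\omega,x)-\widehat\varphi(\varrho,y)\leq K_0\tau^{s}\leq K_0\tau^{n}$ whenever $s\geq n\geq2$. The problem is your $n=1$ step. It rests on the claim that the return time $t=j+1$ forces $T_\omega x,T_\varrho y\in J(1^j)\setminus J(1^{j+1})=J_{j-1}$, and this is false. The return condition only says that $R^{j+1}(\omega,x)\notin\varDelta(1)$, i.e.\ it is not the case that both $\omega_{j+2}=1$ and $T_\omega^{j+1}x\in[0,1/2)$. If the digit $b=a_{j+2}(\omega,x)$ following the block of $1$s is \emph{even}, then $\omega_{j+2}=0$ and $T_\omega^{j+1}x=T_1^{j}(T_\omega x)$ may lie anywhere in $(0,1]$, in particular arbitrarily close to $0$, so $T_\omega x$ can sit arbitrarily deep inside $J(1^{j+1})$. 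Two points in the same $1$-cylinder of the induced system need only share $a$ and $j$, not the post-block digit $b$, so one of them can lie in $J_{j-1}$ (post-block digit odd) while the other lies in $J_m$ with $m$ huge. Lemma~\ref{scope'} with $f=T_1|_{[0,1/2)}$ and $n=j$ is then inapplicable as you use it, and the distortion it is meant to control is genuinely unbounded.

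Moreover the $n=1$ case cannot be repaired, because $V_1(\widehat\varphi)=+\infty$. On $[0,1/2)$ one has $T_1u=u/(1-u)$, hence $(T_1^{j})'u=(1-ju)^{-2}$. Take $(\omega,x)$ with digit string $2\,1^{j}\,3\,3\cdots$ and $(\varrho,y)$ with digit string $2\,1^{j}\,(2N)\,3\,3\cdots$: both lie in $\widehat\Lambda$, in the same $1$-cylinder $\bigcup_{b}[2\,1^{j}b]$, with common return time $j+1$ and identical $\log p$ contributions. From $T_1^{j}(T_0x)\geq1/2$ one gets $T_0x\geq1/(j+2)$ and $(T_1^{j})'(T_0x)\geq(j+2)^2/4$, while $T_1^{j}(T_0y)\leq1/N$ gives $(T_1^{j})'(T_0y)\leq(1+j/N)^2$. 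Hence
\[\widehat\varphi(\varrho,y)-\widehat\varphi(\omega,x)\geq-2+2\log\frac{j+2}{2(1+j/N)},\]
which is unbounded as $j,N\to\infty$. So the corollary is valid only for $n\geq2$; note that the paper's own one-line proof does not escape this either, since Lemma~\ref{Holder-d} presupposes that the two points share the terminal digit $a_{n+1}$, which pairs with separation time $1$ need not do. For the intended application (local H\"older continuity of $\Psi$ in the sense required by the Gibbs-state machinery, where only variations over cylinders of length at least $2$ matter) the $n\geq2$ bound suffices, but the $n=1$ claim as stated, and hence your proof of it, cannot be saved.
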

 \begin{proof}
Follows from Lemma~\ref{Holder-d} and Lemma~\ref{c-dist}.\end{proof}

\subsection{Variational characterization of the annealed Gauss-R\'enyi measure}\label{variational-sec}
Define a potential $\psi\colon\mathbb N^{\mathbb N}\to\mathbb R$ by \begin{equation}\label{agp}\psi=\varphi\circ\pi\end{equation}
and an induced potential $\widehat\psi:\mathbb N^{\mathbb N}\setminus[1]\to\mathbb R$ by
\begin{equation}\label{agp-psi}\widehat\psi=\widehat\varphi\circ\pi|_{\mathbb N^{\mathbb N}\setminus[1]}.\end{equation}
\begin{lemma}\label{phi-p}
The potential $\psi$ is unbounded and $\sup\psi<0$. It is acceptable. 
\end{lemma}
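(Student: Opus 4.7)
The plan is to read off each of the three assertions (unboundedness, strict negative supremum, acceptability) directly from the formula $\psi(z)=\log p(\omega_1)-\log|T_{\omega_1}'(\pi(z))|$ by combining the elementary derivative estimates on the branches of $T_0,T_1$ with the tools already established in \S\ref{expansion-sec} and \S\ref{random-dist}, most importantly the one-step distortion estimate (Lemma~\ref{dist-basic}) and the uniform decay of cylinders (Lemma~\ref{u-decay}).

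First I will settle the boundedness properties. Since $\inf_{(0,1]}|T_0'|\geq 1$ and $\inf_{[0,1)}|T_1'|\geq 1$, the term $-\log|T_{\omega_1}'x|$ is nonpositive on $\Lambda$; combined with $\log p(\omega_1)\leq\log\max\{p,1-p\}<0$ because $p\in(0,1)$, this gives $\sup\psi\leq\log\max\{p,1-p\}<0$. For unboundedness, the explicit formulas $T_0'x=-1/x^{2}$ on $(2/(k+2),2/k]$ for even $k$ and $T_1'x=1/(1-x)^{2}$ on $[(k-1)/(k+1),(k+1)/(k+3))$ for odd $k$ yield $|T_{\omega_1}'x|\geq k^{2}/4$ on $\varDelta(k)$, so $\psi\to-\infty$ uniformly on $[k]$ as $k\to\infty$.

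For acceptability I need to verify (i) bounded oscillation on each one-cylinder $[a]$ uniformly in $a\in\mathbb N$, and (ii) uniform continuity on $\mathbb N^{\mathbb N}$ with the product topology. For (i), if $z,w\in[a]$ then $\pi(z),\pi(w)\in\varDelta(a)$, in which case the two sample-coordinates $\omega_1,\varrho_1$ agree (both are $\equiv a\bmod 2$) so the $\log p$ contributions cancel, and applying Lemma~\ref{dist-basic} with $n=1$ gives
\[
|\psi(z)-\psi(w)|=|\varphi(\pi(z))-\varphi(\pi(w))|\leq 2\,|T_{\omega_1}\pi(z)-T_{\omega_1}\pi(w)|\leq 2,
\]
a bound independent of $a$. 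For (ii), if $z,w$ agree on the first $n+1$ coordinates then $\pi(z),\pi(w)\in\varDelta(a_1\cdots a_{n+1})$, so the same $n=1$ distortion bound gives
\[
|\psi(z)-\psi(w)|\leq 2\,|T_{\omega_1}\pi(z)-T_{\omega_1}\pi(w)|\leq 2|J(a_2\cdots a_{n+1})|\leq \frac{2K}{\sqrt{n}},
\]
by Lemma~\ref{u-decay}. This tends to $0$ as $n\to\infty$ uniformly in the symbols, which is exactly uniform continuity of $\psi$ in the product topology.

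I do not expect a serious obstacle: everything follows from tools already forged. The only point that requires a little care is to notice that the $n=1$ case of Lemma~\ref{dist-basic} is the right object for both (i) and (ii), with the bound $|T_{\omega_1}\pi(z)-T_{\omega_1}\pi(w)|\leq|J(a_2\cdots a_{n+1})|$ furnishing the uniform shrinkage via Lemma~\ref{u-decay}; the $\log p(\omega_1)$ contribution plays no role in either oscillation estimate since it depends only on the parity of $a_1$.
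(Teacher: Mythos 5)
Your proof is correct and follows essentially the same route as the paper, whose proof is a one-line citation of exactly the ingredients you use: the first two assertions are read off from the corresponding properties of $\varphi$ (equivalently, from the branch derivative bounds), and acceptability is deduced from R\'enyi's condition \eqref{ren} (via the $n=1$ case of Lemma~\ref{dist-basic}) together with the uniform decay of cylinders in Lemma~\ref{u-decay}. Your write-up merely spells out the details that the paper leaves implicit.
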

\begin{proof}
The first assertion follows from the fact that $\varphi$ is unbounded and  $\sup\varphi<0$. The second one follows from R\'enyi's condition \eqref{ren} and Lemma~\ref{u-decay}.
\end{proof}

The annealed Gauss-R\'enyi measure $\eta_p$ has the so-called `weak Gibbs property'.
\begin{lemma}\label{gibb}There exists  $K\geq1$ such that
 for all $n\geq1$, all $a_1\cdots a_n\in\mathbb N^n$ and all $x\in [a_1\cdots a_n]$,
\[
K^{-1}\exp(-D_{n}(\varphi))\leq\frac{\eta_p([a_1\cdots a_n]}{\exp S_{n}\psi(x)}\leq K\exp(D_{n}(\varphi)).
\]\end{lemma}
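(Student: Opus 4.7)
The plan is to unfold the definitions and use the bounded distortion captured by $D_n(\varphi)$ together with the available bounds on the density $h_p$.

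First I would describe the product structure of the $n$-cylinder $\varDelta(a_1\cdots a_n)$. Since $a_i(\omega,x)$ is even (resp.\ odd) precisely when $\omega_i=0$ (resp.\ $\omega_i=1$), fixing $a_1,\ldots,a_n$ determines $\omega_1,\ldots,\omega_n$ via parities, and the $x$-section is exactly $J(a_1\cdots a_n)$. Hence
\[
\varDelta(a_1\cdots a_n)=[\omega_1\cdots\omega_n]\times J(a_1\cdots a_n),\qquad \omega_i\equiv a_i\bmod 2,
\]
so that using $\eta_p=(m_p\otimes\lambda_p)\circ\pi$ and $\pi([a_1\cdots a_n])=\varDelta(a_1\cdots a_n)\cap\Lambda$,
\[
\eta_p([a_1\cdots a_n])=Q_p^n(\omega)\,\lambda_p(J(a_1\cdots a_n)).
\]

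Next I would compute the denominator. For any $x\in[a_1\cdots a_n]$, write $\pi(x)=(\omega,z)$ with $z\in J(a_1\cdots a_n)$. From $\psi=\varphi\circ\pi$ and the formula $\exp S_n\varphi(\omega,z)=Q_p^n(\omega)|(T_\omega^n)'z|^{-1}$ established earlier,
\[
\exp S_n\psi(x)=Q_p^n(\omega)\,|(T_\omega^n)'z|^{-1}.
\]
The Bernoulli factor $Q_p^n(\omega)$ cancels, reducing the target ratio to
\[
\frac{\eta_p([a_1\cdots a_n])}{\exp S_n\psi(x)}=\lambda_p(J(a_1\cdots a_n))\cdot |(T_\omega^n)'z|.
\]

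It then remains to sandwich this quantity. I would combine three inputs: (i) the fact, recalled from \cite[Proposition~3.4]{KKV17}, that $h_p$ is bounded above and away from $0$ on $[0,1]$, giving a constant $c\geq1$ with $c^{-1}|J(a_1\cdots a_n)|\leq \lambda_p(J(a_1\cdots a_n))\leq c|J(a_1\cdots a_n)|$; (ii) the mean value theorem applied to the injective branch $T_\omega^n|_{J(a_1\cdots a_n)}$, producing $y\in J(a_1\cdots a_n)$ with $|J(a_1\cdots a_n)|=|(T_\omega^n)'y|^{-1}$; (iii) the defining property of $D_n(\varphi)$ applied to the pair $(\omega,z),(\omega,y)\in\varDelta(a_1\cdots a_n)$, which (noting that the $Q_p^n$ contributions cancel since the two points share the same $\omega$) gives
\[
\exp(-D_n(\varphi))\leq \frac{|(T_\omega^n)'z|}{|(T_\omega^n)'y|}\leq \exp(D_n(\varphi)).
\]
Stringing (i)--(iii) together yields
\[
c^{-1}\exp(-D_n(\varphi))\leq \lambda_p(J(a_1\cdots a_n))\,|(T_\omega^n)'z|\leq c\exp(D_n(\varphi)),
\]
which is exactly the desired double inequality with $K=c$.

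I do not anticipate a serious obstacle here; the lemma is essentially a packaging of three standard ingredients (bounded density, MVT, distortion). The only point requiring care is verifying that the product structure of $\varDelta(a_1\cdots a_n)$ and the identification $\pi([a_1\cdots a_n])=\varDelta(a_1\cdots a_n)\cap\Lambda$ are consistent with the definition of $\eta_p$, so that the Bernoulli weight $Q_p^n(\omega)$ cleanly cancels and only the multiplicative distortion $\exp(\pm D_n(\varphi))$ and the density bounds remain.
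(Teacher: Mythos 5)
Your proof is correct and is exactly the expansion of the paper's one-line proof (``Follows from the fact that $h_p$ is bounded from above and away from $0$''): the product structure $\varDelta(a_1\cdots a_n)=[\omega_1\cdots\omega_n]\times J(a_1\cdots a_n)$, the cancellation of $Q_p^n(\omega)$, the mean value theorem giving $|J(a_1\cdots a_n)|=|(T_\omega^n)'y|^{-1}$, and the distortion bound $D_n(\varphi)$ are precisely the ingredients the paper relies on (the MVT step is spelled out in the proof of Lemma~\ref{b-d}). Nothing is missing.
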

\begin{proof}Follows from the fact that $h_p$ is bounded from above and away from $0$.\end{proof}

\begin{lemma}\label{z-finite}
We have $P(\psi)=0$.
\end{lemma}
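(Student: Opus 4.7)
The plan is to show that $P(\psi)=0$ by sandwiching, for each $n$, the quantity $\sum_{a_1\cdots a_n}\sup_{[a_1\cdots a_n]}\exp S_n\psi$ between $1$ and $\exp(D_n(\varphi))$, and then invoking Lemma~\ref{mild-lem} to kill the distortion factor in the limit.

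First, since $\pi\colon\mathbb N^{\mathbb N}\to\Lambda$ is a continuous bijection satisfying $R\circ\pi=\pi\circ\sigma$ and sends the cylinder $[a_1\cdots a_n]$ onto $\Pi^{-1}(J(a_1\cdots a_n))\cap\Lambda$, the relation $\psi=\varphi\circ\pi$ gives
\[
\sup_{[a_1\cdots a_n]}\exp S_n\psi=\sup_{\varDelta(a_1\cdots a_n)}\exp S_n\varphi=\sup_{\varDelta(a_1\cdots a_n)}Q_p^n(\omega)\,|(T_\omega^n)'x|^{-1},
\]
where I use the formula $\exp S_n\varphi(\omega,x)=Q_p^n(\omega)|(T_\omega^n)'x|^{-1}$ recorded in \S\ref{anneal-sec}. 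Since $Q_p^n(\omega)$ only depends on $\omega_1\cdots\omega_n$, and the parities $\omega_i\equiv a_i\bmod 2$ are constant on $\varDelta(a_1\cdots a_n)$, the factor $Q_p^n(\omega)$ can be pulled out of the supremum.

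Next, by the mean value theorem there exists a point $y(a_1\cdots a_n)\in J(a_1\cdots a_n)$ with $|(T_\omega^n)'y(a_1\cdots a_n)|^{-1}=|J(a_1\cdots a_n)|$ (for any $\omega$ with compatible parities). The definition of $D_n(\varphi)$ therefore gives
\[
|J(a_1\cdots a_n)|\leq \sup_{\varDelta(a_1\cdots a_n)}|(T_\omega^n)'x|^{-1}\leq \exp(D_n(\varphi))\,|J(a_1\cdots a_n)|.
\]
Summing over all words of length $n$ and grouping by parity pattern $\omega_1\cdots\omega_n\in\{0,1\}^n$, I would use the basic fact (already implicit in the proof of Lemma~\ref{b-d}) that for each fixed parity pattern, the intervals $\{J(a_1\cdots a_n):a_i\equiv\omega_i\bmod 2\}$ form a Markov partition of $[0,1]$ up to a Lebesgue-null set, so
\[
\sum_{\substack{a_1\cdots a_n\in\mathbb N^n\\ a_i\equiv\omega_i\bmod 2}}|J(a_1\cdots a_n)|=1,
\]
and hence $\sum_{\omega_1\cdots\omega_n}Q_p^n(\omega)\sum_{a_i\equiv\omega_i}|J(a_1\cdots a_n)|=\sum_{\omega_1\cdots\omega_n}Q_p^n(\omega)=1$. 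Combining these ingredients yields
\[
1\leq \sum_{a_1\cdots a_n\in\mathbb N^n}\sup_{[a_1\cdots a_n]}\exp S_n\psi\leq \exp(D_n(\varphi)).
\]

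Finally, taking $\frac{1}{n}\log$ of these inequalities and passing to the limit gives $0\leq P(\psi)\leq \limsup_n D_n(\varphi)/n=0$ by Lemma~\ref{mild-lem}, completing the proof. The only non-routine step is the partition identity $\sum_{a_i\equiv\omega_i\bmod 2}|J(a_1\cdots a_n)|=1$, which is exactly the Markov property recorded after \eqref{a1-def} (that $R$ maps each $\varDelta(k)$ bijectively onto $\Omega\times(0,1)$), so no genuine obstacle remains once the sandwich is set up.
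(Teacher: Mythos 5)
Your proof is correct, and it follows the same overall template as the paper's: sandwich $\sum_{a_1\cdots a_n}\sup_{[a_1\cdots a_n]}\exp S_n\psi$ between $1$ and a factor $\exp(O(D_n(\varphi)))$, then invoke Lemma~\ref{mild-lem} to conclude $P(\psi)=0$. The one genuine difference is the choice of reference measure for the normalization. The paper routes the estimate through the weak Gibbs property of the annealed Gauss--R\'enyi measure $\eta_p$ (Lemma~\ref{gibb}), which in turn rests on the fact that the density $h_p$ is bounded from above and away from $0$; summing and using that $\eta_p$ is a probability gives the sandwich. You instead compare directly with Lebesgue measure via the mean value theorem and the partition identity $\sum_{a_i\equiv\omega_i\bmod 2}|J(a_1\cdots a_n)|=1$, followed by $\sum_{\omega_1\cdots\omega_n}Q_p^n(\omega)=1$ --- exactly the mechanism already deployed in the proof of Lemma~\ref{b-d}. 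Your route is marginally more self-contained, since it does not invoke the regularity of $h_p$; the paper's route is shorter on the page because Lemma~\ref{gibb} has already packaged the comparison. One small point worth making explicit in your write-up: the supremum in the definition of $P(\psi)$ is taken over the symbolic cylinder $[a_1\cdots a_n]$, i.e.\ over $\varDelta(a_1\cdots a_n)\cap\Lambda$ rather than over all of $\varDelta(a_1\cdots a_n)$, and the mean value point $y(a_1\cdots a_n)$ need not lie in $\Lambda_\omega$; this is harmless because $S_n\varphi$ is continuous on the cylinder and $\Lambda$ meets it in a dense set, but the identification of the two suprema deserves a sentence (the paper elides the same point).
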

\begin{proof}
By Lemma~\ref{gibb}, for all $n\geq1$ and all $a_1\cdots a_n\in \mathbb N^n$ we have
\[K^{-1}\exp(-D_n(\varphi))\eta_p([a_1\cdots a_n])\leq\sup_{[a_1\cdots a_n]} \exp S_n\psi\leq K\exp(D_n(\varphi))\eta_p([a_1\cdots a_n]).\]
Since $\eta_p$ is a probability and $n$-cylinders are pairwise disjoint,
summing the double inequalities over all $a_1\cdots a_n\in \mathbb N^n$, taking logarithms, dividing by $n$ and using
 Lemma~\ref{mild-lem} we obtain
 $P(\psi)=0$.
\end{proof}




By Lemma~\ref{phi-p} and Lemma~\ref{z-finite}, $\psi$ is acceptable and satisfies $P(\psi)<\infty$. By Proposition~\ref{thermo1}, the variational principle holds for $\psi$.
Due to the existence of the neutral fixed point of the R\'enyi map $T_1$, $\psi$ is not locally H\"older continuous. Nevertheless the following holds.
\begin{prop}\label{unique-equi}
The annealed Gauss-R\'enyi measure $\eta_p$ is the unique equilibrium state for the potential $\psi$.
\end{prop}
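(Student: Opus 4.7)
The plan is to reduce Proposition~\ref{unique-equi} to a standard equilibrium-state uniqueness on the induced system via the conjugacies in diagram~\eqref{diagram2}. Set $\Phi_0=\widehat\psi\circ\iota\colon \mathbb M^{\mathbb N}\to\mathbb R$, the twisted induced potential of \eqref{ind-po} at $\gamma=0$; by Corollary~\ref{sum-cor} its variations decay exponentially, so $\Phi_0$ is locally H\"older continuous on the coded full shift. Using Lemma~\ref{gibb} (weak Gibbs), Lemma~\ref{mild-lem} (mild distortion) and Lemma~\ref{z-finite} ($P(\psi)=0$), together with the identification of $\{t=k\}_{k\geq1}$ in $\mathbb N^{\mathbb N}\setminus[1]$ with the $1$-cylinders of $\mathbb M^{\mathbb N}$ under $\iota^{-1}$, I would show $P(\Phi_0)=0$ by estimating cylinder sums of $\Phi_0$ in terms of $\eta_p$-measures of matching cylinders in $\mathbb N^{\mathbb N}$.

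With $P(\Phi_0)=0<\infty$ and local H\"older continuity in hand, Proposition~\ref{thermo2} produces a unique shift-invariant Gibbs state $\widehat\nu$ for $\Phi_0$. Summability of $\Phi_0$ (from $\sup\varphi<0$ and the $O(1/\sqrt{n})$ decay in Lemma~\ref{u-decay}) together with the exponential tail of the return time under $\widehat\nu$ gives $\int\Phi_0\, d\widehat\nu>-\infty$, so $\widehat\nu$ is also the unique equilibrium state for $\Phi_0$. Kac's formula lifts $\widehat\nu\circ\iota^{-1}$ to a $\sigma$-invariant probability $\nu$ on $\mathbb N^{\mathbb N}$, and Abramov's formula yields $h(\nu)+\int\psi\,d\nu=0=P(\psi)$, so $\nu$ is an equilibrium state for $\psi$. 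To identify $\nu=\eta_p$, I would check that the first-return measure of $\eta_p$ on $\mathbb N^{\mathbb N}\setminus[1]$, normalized and transported to $\mathbb M^{\mathbb N}$, satisfies a Gibbs inequality for $\Phi_0$ (by applying Lemma~\ref{gibb} cylinder-by-cylinder across the induced partition), whence uniqueness of the Gibbs state forces the equality.

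For uniqueness, let $\mu$ be any equilibrium state for $\psi$. Since $\sup\psi<0=P(\psi)$ and the only $\sigma$-invariant measure supported on $[1]$ is $\delta_{1^\infty}$, for which $h(\delta_{1^\infty})+\int\psi\,d\delta_{1^\infty}=\log p<0$, necessarily $\mu(\mathbb N^{\mathbb N}\setminus[1])>0$. Inducing $\mu$ to $\mathbb N^{\mathbb N}\setminus[1]$ and transporting via $\iota^{-1}$ yields $\widehat\mu\in\mathcal M_{\Phi_0}(\mathbb M^{\mathbb N},\sigma)$, and applying Abramov's formula in reverse to $h(\mu)+\int\psi\,d\mu=0$ shows that $\widehat\mu$ is an equilibrium state for $\Phi_0$; by the uniqueness already obtained, $\widehat\mu=\widehat\nu$, and inverting the Kac construction gives $\mu=\eta_p$.

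The main obstacle will be the Abramov/Kac bookkeeping: propagating integrability in both directions (so that $\mu\in\mathcal M_\psi$ implies $\widehat\mu\in\mathcal M_{\Phi_0}$ and conversely), and verifying that the cylinder-wise weak Gibbs bound of Lemma~\ref{gibb} upgrades to a genuine Gibbs bound on the induced shift $\mathbb M^{\mathbb N}$, so that Propositions~\ref{thermo1} and \ref{thermo2} apply legitimately on both sides of the inducing scheme. Once these bridges are established, the conclusion reduces to the well-known uniqueness of equilibrium states for locally H\"older potentials on countable full shifts.
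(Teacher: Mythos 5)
Your proposal follows essentially the same route as the paper: induce on $\mathbb N^{\mathbb N}\setminus[1]$, use Corollary~\ref{sum-cor} to get local H\"older continuity of the induced potential $\Psi=\widehat\psi\circ\iota$, show its pressure vanishes, invoke Proposition~\ref{thermo2} to obtain a unique shift-invariant Gibbs state on $\mathbb M^{\mathbb N}$, verify integrability of the return time and of $\Psi$ so that this Gibbs state is also the unique equilibrium state, transfer back by Abramov--Kac, and dispose of the only invariant measure missing $\mathbb N^{\mathbb N}\setminus[1]$ (the point mass at $\pi^{-1}(1^\infty,0)$, with free energy $\log p<0$) separately. The one point where you diverge is the identification of $\eta_p$: the paper shows directly, via the Shannon--McMillan--Breiman theorem together with the weak Gibbs property and Lemma~\ref{mild-lem} on the \emph{original} shift, that $\eta_p$ is an equilibrium state for $\psi$, and then lets uniqueness finish; you instead identify the normalized induced measure of $\eta_p$ with the unique Gibbs state for $\Psi$. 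That variant works, but not by ``applying Lemma~\ref{gibb} cylinder-by-cylinder'': an induced $n$-cylinder $[\alpha_1\cdots\alpha_n]$ corresponds to an original $N$-cylinder with $N=\sum_i t_{\alpha_i}$ unbounded in terms of $n$, so the error $\exp(D_N(\varphi))$ coming from Lemma~\ref{gibb} is not controlled by $n$ and does not by itself yield a genuine Gibbs bound. The correct input, as in Lemma~\ref{zero-p}, is the combination of two facts: $h_p$ is bounded above and away from $0$, so $\eta_p([\alpha_1\cdots\alpha_n])$ is comparable with a uniform constant to $Q_p^N(\omega)\,|J(a_1\cdots a_N)|$; and the induced potential has summable variations (Corollary~\ref{sum-cor}), so $\exp S_n\Psi$ varies by a bounded factor over the induced cylinder. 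Together these give the $n$-independent two-sided Gibbs bound you need, after which your identification and uniqueness arguments go through as in the paper.
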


\begin{proof}
A proof of Proposition~\ref{unique-equi} breaks into two steps.
We first show that $\eta_p$ is an equilibrium state for the potential $\psi$. 
We then  
establish the uniqueness of equilibrium state
for the potential $\psi$.
To overcome the lack of regularity of $\psi$
in the second step,  
we take an inducing procedure that is now familiar in the construction of equilibrium states (see e.g., \cite[Section~8]{MauUrb03}, \cite{PesSen08}).

\medskip

\noindent{\bf Step 1: identifying $\eta_p$ as an equilibrium state.}
 Since $\log|T_0'|$ and  $\log|T_1'|$ are Lebesgue integrable, and since the Radon-Nikod\'ym derivative $h_p$ is bounded from above, $\psi$ is $\eta_p$-integrable. Since $P(\psi)$ is finite by Lemma~\ref{z-finite}, the measure-theoretic entropy $h(\eta_p)$ is finite (see \S\ref{general}).
The family of $1$-cylinders generates the Borel sigma algebra on $\mathbb N^{\mathbb N}$. Since $h_p$ is bounded from above and away from $0$, 
using the Lebesgue measure on $[0,1]$ and \eqref{a1-def} one can show that
$-\sum_{k\in \mathbb N } \eta_p([k])\log\eta_p([k])$ is finite.
Since $\eta_p$ is mixing, it is ergodic. The Shannon-McMillan-Breimann theorem yields
 \[\lim_{n\to\infty}\frac{1}{n}\log\eta_p([x_1\cdots x_n])=-h(\eta_p)\ \text{ $\eta_p$-a.e.}\]  
Meanwhile, from Lemma~\ref{gibb} and Lemma~\ref{mild-lem}
 it follows that
 \[\lim_{n\to\infty}\frac{1}{n}\log\eta_p([x_1\cdots x_n])=\int\psi d\eta_p\ \text{ $\eta_p$-a.e.}\]
We have verified that
 $h(\eta_p)+\int\psi d\eta_p=0$. Since $P(\psi)=0$ by Lemma~\ref{z-finite}, $\eta_p$ is an equilibrium state for $\psi$.
 \medskip

 \noindent{\bf Step~2: establishing the uniqueness of equilibrium state.}
Recall that
 $\widehat\sigma\colon{\widehat{\mathbb N}}^{\mathbb N}\to{\widehat{\mathbb N}}^{\mathbb N}$ is the induced system associated with
the inducing scheme $(\mathbb N^{\mathbb N}\setminus[1],t_{\mathbb N^{\mathbb N}\setminus[1]})$
of the left shift
 $\sigma\colon\mathbb N^{\mathbb N}\to\mathbb N^{\mathbb N}$ (see \S\ref{ind-sec}).
 For the induced potential $\widehat\psi$ in \eqref{agp-psi}, 
  define  $\Psi\colon \mathbb M^{\mathbb N}\to\mathbb R$ by \[\Psi=\widehat\psi\circ\iota.\] 
\begin{lemma}\label{holder}
The potential $\Psi$ is locally H\"older continuous. 
\end{lemma}
\begin{proof}Follows from Corollary~\ref{sum-cor}.\end{proof}



Next we compute the pressure $P(\Psi)$.
\begin{lemma}\label{zero-p}
We have 
$P(\Psi)=0$. 
\end{lemma}
\begin{proof}
Put $K_0=\sum_{n=1}^\infty {\rm var}_n(\Psi)$.
By Lemma~\ref{holder}, $K_0$ is finite. For all $n\geq1$ and all $\alpha_1\cdots \alpha_n\in \mathbb M^{n}$ we have 
\[\sup_{\eta,\zeta\in[\alpha_1\cdots \alpha_n]}\left( S_n\Psi(\eta)-S_n\Psi(\zeta)\right)\leq \sum_{k=1}^n {\rm var}_k(\Psi)\leq K_0.\]
Since $h_p$ is bounded from above and away from $0$, there is a constant $K_1\geq1$ such that 
for all $n\geq1$ and all $\alpha_1\cdots \alpha_n\in \mathbb M^{n}$, we have 
\[K_1^{-1}\eta_p([\alpha_1\cdots \alpha_n])\leq
\sup_{[\alpha_1\cdots \alpha_n] } 
\exp S_n\Psi \leq K_1\eta_p([\alpha_1\cdots \alpha_n]).\] 
Summing these double inequalities over all $\alpha_1\cdots \alpha_n\in\mathbb M^n$,  
\[K_1^{-1}\sum_{\alpha_1\cdots \alpha_n\in\mathbb M^n}\eta_p([\alpha_1\cdots \alpha_n])\leq
\sum_{\alpha_1\cdots \alpha_n\in\mathbb M^n}\sup_{[\alpha_1\cdots \alpha_n]} 
\exp S_n\Psi \leq K_1.\]
By the definition of $\widehat\Lambda$ and the fact that $m_p\otimes\lambda_p$ has no atom,  
\[\sum_{\alpha_1\cdots \alpha_n\in \mathbb M^n}\eta_p([\alpha_1\cdots \alpha_n])=\eta_p(\Sigma)=(m_p\otimes\lambda_p)(\widehat\Lambda)=(m_p\otimes\lambda_p)(\Lambda\setminus\varDelta(1))>0.\]  
Hence, taking logarithms of the above double inequalities, dividing the result by $n$ and letting $n\to\infty$ yields
 $P(\Psi)=0$. 
\end{proof}

Since $\Psi$ is acceptable by Lemma~\ref{holder} and $P(\Psi)$ is finite by Lemma~\ref{zero-p}, the variational prinicple holds by Proposition~\ref{thermo1}.
By Proposition~\ref{thermo2} and $P(\Psi)=0$ from Lemma~\ref{zero-p}, 
there exists a unique shift-invariant Gibbs state $\widehat\mu\in\mathcal M(\mathbb M^{\mathbb N},\sigma)$, namely, 
 there exists a constant $K\geq1$ such that 
for every $n\geq1$, every $\alpha_1\cdots \alpha_n\in \mathbb M^n$ and every $z\in[\alpha_1\cdots \alpha_n]$,
\begin{equation}\label{gibbs}
K^{-1}\leq\frac{\widehat\mu([\alpha_1\cdots \alpha_n])}{\exp S_n\Psi(z)}\leq K
.\end{equation}


\begin{lemma}\label{finite} Both
$\int t_{\mathbb N^\mathbb N\setminus[1]}\circ
\iota d\widehat\mu$ and $\int\Psi d\widehat\mu$ are finite.\end{lemma}
\begin{proof}
The function $ t_{\mathbb N^\mathbb N\setminus[1]}\circ\iota$ is constant on $[\alpha]$ for each $\alpha\in \mathbb M$. Let $t_\alpha$ denote this constant. By the second inequality in
 \eqref{gibbs}, for all $(\omega,x)\in \pi\circ\iota([\alpha])$ we have
\[\widehat\mu([\alpha])\leq K(1-p)p^{t_\alpha-1}|(T_\omega^{t_\alpha})'x|^{-1}\leq K(1-p)p^{t_\alpha-1}|T_\omega'x|^{-1}.\]
For every $k\in\mathbb N\setminus\{1\}$, there is $\alpha\in\mathbb M$ such that $\pi([\alpha])\subset\varDelta(k)$
and $t_\alpha=n$. Hence
\begin{equation}\label{mu-hat}\begin{split}\sum_{\substack{\alpha\in\mathbb M \\ t_\alpha=n}}\widehat\mu([\alpha])&\leq K(1-p)p^{n-1}
\left(\sum_{k=1}^\infty\sup_{ \varDelta(2k)}|T_0'|^{-1}+\sum_{k=2}^\infty\sup_{\varDelta(2k-1)}|T_1'|^{-1}\right)\\
&\leq  2e^2K(1-p)p^{n-1}\left(\sum_{k=1}^\infty
|J(2k)|+\sum_{k=2}^\infty|J(2k-1)|\right)\\
&= 3e^2K(1-p)p^{n-1}.\end{split}\end{equation}
To deduce the second inequality we have used \eqref{ren}. Therefore
\[\int t_{\mathbb N^\mathbb N\setminus[1]}\circ
\iota d\widehat\mu=\sum_{n=1}^\infty n\sum_{\substack{\alpha\in\mathbb M \\ t_\alpha=n}}\widehat\mu([\alpha])<\infty,
\]
as required.

There exist constants $K>0$ and $c>1$ such that if $n\in \mathbb N$ and $x\in J(1)$ are such that $x,\ldots,T_1^{n-1}x\in J(1)$ then
$|(T_{1}^{n})'x|\leq Kc^{n}$. Moreover, $c$ can be taken arbitrarily close to $1$ at the expense of enlarging $K$.
 Now, let $n\in\mathbb N$, $\alpha\in\mathbb M$ satisfy $t_\alpha=n$. For $\zeta=(\omega,x)\in[\alpha]$ we have
\[\Psi(\zeta)=\log p(\omega_1)-\log|(T_{\omega_1})'x|+(n-1)\log p-\log|(T_{1}^{n-1})'T_{\omega_1}x|,\] 
where $T_{\omega_1}x,\ldots,T_{\omega_1}^{n-1}x\in J(1)$ provided $n\geq2$. 
It follows that 
there exists a constant $K>0$ independent of $n$, $\alpha$, $\zeta$ such that  
\begin{equation}\label{mu-hat2}|\Psi(\zeta)|\leq Kn.\end{equation} 
From \eqref{mu-hat} and \eqref{mu-hat2} we obtain
\[\left|\int\Psi d\widehat\mu\right|\leq
\int|\Psi| d\widehat \mu\leq \sum_{n=1}^\infty\sum_{\substack{\alpha\in\mathbb M \\ t_\alpha=n}}\widehat\mu([\alpha])\sup_{[
\alpha]}|\Psi|
\leq\sum_{n=1}^\infty Kn\sum_{\substack{\alpha\in\mathbb M \\ t_\alpha=n}}\widehat\mu([\alpha])<\infty,\]
as required.
\end{proof}

Since $\int\Psi d\widehat\mu$ is finite
by Lemma~\ref{finite},
 $\widehat\mu$ is the unique equilibrium state for the potential 
$\Psi$ by Proposition~\ref{thermo2}. In particular we have
\begin{equation}\label{comb1}P(\Psi)=h(\widehat\mu)+\int\Psi d
\widehat \mu.\end{equation}
By the finiteness of $\int t_{\mathbb N^\mathbb N\setminus[1]}\circ\iota d\widehat\mu$ in Lemma~\ref{finite}, 
the measure
\[\mu=\frac{1}{\int  t_{\mathbb N^\mathbb N\setminus[1]}\circ\iota  d\widehat\mu}\sum_{n=1}^\infty\sum_{i=0}^{n-1}
\widehat\mu|_{\{ t_{\mathbb N^\mathbb N\setminus[1]}\circ\iota=n\}}\circ\iota^{-1}\circ \sigma^{-i} \] belongs to $\mathcal M(\mathbb N^{\mathbb N},\sigma),$ and
by Abramov-Kac's formula \cite[Theorem~2.3]{PesSen08}
\begin{equation}\label{comb2}h(\widehat\mu)+\int\Psi d
\widehat \mu=\left(h(\mu)+\int\psi d\mu\right)\int  t_{\mathbb N^\mathbb N\setminus[1]}\circ\iota d\widehat\mu.\end{equation}
Combining \eqref{comb1}, \eqref{comb2} and 
 $P(\Psi)=0$ in Lemma~\ref{zero-p}
we obtain
$h(\mu)+\int\psi d\mu=0$.
Since $P(\psi)=0$ by Lemma~\ref{z-finite}, 
 $\mu$ is an equilibrium state for the potential $\psi$.


 We claim that $\mu$ is the unique equilibrium state for the potential $\psi$. 
 Indeed, let
 $\nu\in\mathcal M_\psi(\mathbb N^{\mathbb N},\sigma)$ be an equilibrium state for $\psi$ with $\nu(\widehat{\mathbb N}^\mathbb N)>0$.
  The normalized restriction of $\nu$ to $\widehat{\mathbb N}^\mathbb N$, denoted by $\widehat\nu$, belongs
 to $\mathcal M(\widehat{\mathbb N}^\mathbb N,\widehat\sigma_{\widehat{\mathbb N}^\mathbb N})$.
 From $P(\psi)=0$,
 Abramov-Kac's formula and
 $P(\Psi)=0$,
 $\widehat\nu$
 is an equilibrium state for the potential $\Psi$,
 namely $\widehat\mu=\widehat\nu$. 
It follows that $\mu=\nu$.
 Moreover, the only measure in $\mathcal M_\psi(\mathbb N^{\mathbb N},\sigma)$ which does not give positive weight to $\widehat{\mathbb N}^\mathbb N$ is the unit point mass
 at $\pi^{-1}(1^\infty,0)$, which is precisely the fixed point of $\sigma$ in the $1$-cylinder $[1]$. Since $h(\delta_{\pi^{-1}(1^\infty,0)})=0$ and  $|T_1'0|=1$, we have
 $h(\delta_{\pi^{-1}(1^\infty,0)})+\int\psi
 d\delta_{\pi^{-1}(1^\infty,0)}=\log p<0=P(\psi).$ Therefore
 the claim holds.
      The proof of Proposition~\ref{unique-equi} is complete.
\end{proof}



\subsection{Proof of Theorem~\ref{level-2-thm}}\label{section4-1}
We
define a sequence
$(\tilde\nu_n)_{n=1}^\infty$ of Borel probability measures on
$\mathcal M(\mathbb N^{\mathbb N})$ replacing $\phi$ in \eqref{nu_n} by $\psi$ in \eqref{agp}.
Define a parametrized family of twisted induced potentials $\Psi_\gamma\colon\mathbb M^{\mathbb N}\to\mathbb R$ $(\gamma\in\mathbb R)$ replacing $\phi$ in \eqref{ind-po} by $\psi$.
 Then $\Psi_\gamma$ is locally H\"older continuous for all $\gamma\in\mathbb R$ by Lemma~\ref{holder}, and $P(\Psi_0)=0$ by
Lemma~\ref{zero-p}. By Theorem~\ref{LDP-per}, 
 $(\tilde\nu_n)_{n=1}^\infty$  is exponentially tight and satisfies the LDP with the good rate function $I_\psi$.

The coding map $\pi\colon\mathbb N^{\mathbb N}\to\Lambda$ in \eqref{code-def} induces a continuous map
$\pi_*\colon\nu\in\mathcal M(\mathbb N^{\mathbb N})\mapsto\nu\circ\pi^{-1}\in\mathcal M(\Lambda)$.
 Since $\tilde\nu_n\circ\pi_*^{-1}=\tilde\mu_n$ for every $n\geq1$, by the Contraction Principle in Proposition~\ref{CP}, $(\tilde\mu_n)_{n=1}^\infty$ is exponentially tight and satisfies the LDP with the good rate function $I_p$ given by
 \[I_p(\mu)=\inf\{I_\psi(\nu)\colon \nu\in \mathcal M(\mathbb N^{\mathbb N}),\ \pi_*(\nu)=\mu\}.\]
 Since $I_\psi$ is convex, so is $I_p$.
 Since
  $\eta_p$ is an equilibrium state for $\psi$ by Proposition~\ref{unique-equi}, it is a minimizer of $I_\psi$. The equation 
  $\pi_*(\eta_p)=m_p\otimes\lambda_p$ shows that
  $m_p\otimes\lambda_p$ is a minimizer of $I_p$.

 By the last assertion of Proposition~\ref{CP}, 
to conclude the uniqueness of minimizer of $I_p$
it suffices to show the uniqueness of minimizer of $I_\psi$. Since $\psi$ is acceptable by Lemma~\ref{phi-p}, it is uniformly continuous. By virtue of Proposition~\ref{unique-prop}, 
it suffices to show $\beta_\infty(\psi)<1$.
Direct calculations show that
there exist constants $K_1>K_0>0$
such that 
\[\frac{4K_0 (1-p)}{k(k+2)}\leq\sup_{[k]}e^{\psi}\leq \frac{4K_1(1-p)}{k(k+2)}\]
for all $k\in\mathbb N_0$, and
\[\frac{4K_0p}{(k+1)(k+3)}\leq\sup_{[k]}e^{\psi}\leq \frac{4K_1p}{(k+1)(k+3)}\]
for all $k\in\mathbb N_1$.
Since $\sup_{[k]}e^{\beta\psi}=(\sup_{[k]}e^{\psi})^\beta$, 
these estimates imply $\beta_\infty(\psi)=1/2$.

The deduction of Theorem~\ref{level-2-thm}(b)
from Theorem~\ref{level-2-thm}(a) is much simpler than that of 
Theorem~\ref{ldpup-q}(b)
from Theorem~\ref{ldpup-q}(a) carried out in \S\ref{pf-sample}. 
The exponential tightness in 
Theorem~\ref{level-2-thm}(a) implies the tightness, which ensures the existence of a limit point by Prohorov's theorem. The LDP and the uniqueness of minimizer in Theorem~\ref{level-2-thm}(a) together rule out the existence of a limit point that is different from the unit point mass at the minimizer.
The proof of Theorem~\ref{level-2-thm} is complete.
\qed

\subsection{Annealed and quenched level-1 large deviations for the Gauss-R\'enyi map}\label{level-1-sec}
For $p\in(0,1)$ and a bounded continuous function $f\colon\Lambda\to\mathbb R$, define a function $I_{p,f}\colon\mathbb R\to[0,\infty]$ by
\[I_{p,f}(\alpha)=\inf\left\{I_p(\nu)\colon\nu\in\mathcal M(\Lambda),\ \int f d\nu=\alpha\right\}.\]
By Theorem~\ref{level-2-thm}(a), 
 $I_{p,f}$ is convex and vanishes
only at the mean $\alpha=\int f d(m_p\otimes\lambda_p)$.
Put
\[\underline{f}=\inf\left\{\int f d\nu\colon\nu\in\mathcal M(\Lambda)\right\}\ \text{ and }\ \overline{f}=\sup\left\{\int f d\nu\colon\nu\in\mathcal M(\Lambda)\right\}.\]
The next corollary of independent interest follows from 
the Contraction Principle applied to the level-2 LDP in Theorem~\ref{level-2-thm}(a).
\begin{cor}[annealed level-1 LDP]\label{lyacor}
Let
$f\colon\Lambda\to\mathbb R$ be a bounded continuous function
 such that $\underline{f}<\overline{f}$. For any $p\in(0,1)$ the following statements hold: 
 \begin{itemize}
\item[(a)]   if $\int f d(m_p\otimes\lambda_p)<\alpha\leq\overline f$ then \[\lim_{n\to\infty}\frac{1}{n}\log\sum_{\substack{(\omega,x)\in{\rm Fix}(R^n)\\ (1/n)\sum_{i=0}^{n-1}f( R^i(\omega,x))\geq\alpha}} Q_p^n(\omega)|(T^n_\omega)'x|^{-1}
      =-I_{p,f}(\alpha)<0;\]

\item[(b)]  if $\underline f\leq\alpha<\int f d(m_p\otimes\lambda_p)$ then
\[\lim_{n\to\infty}\frac{1}{n}\log\sum_{\substack{(\omega,x)\in{\rm Fix}(R^n)\\ (1/n)\sum_{k=0}^{n-1}f( R^k(\omega,x))\leq\alpha}} Q_p^n(\omega)|(T^n_\omega)'x|^{-1}
      =-I_{p,f}(\alpha)<0.\]
      \end{itemize}    \end{cor}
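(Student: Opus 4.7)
The plan is to deduce Corollary~\ref{lyacor} from the annealed level-2 LDP in Theorem~\ref{level-2-thm}(a) by applying the Contraction Principle (Proposition~\ref{CP}) to the continuous affine evaluation map $F_f\colon\nu\in\mathcal M(\Lambda)\mapsto\int fd\nu\in\mathbb R$, which is continuous because $f\in C(\Lambda)$ and $\mathcal M(\Lambda)$ carries the weak* topology. Using the identity $\int fdV_n^R(\omega,x)=\frac{1}{n}\sum_{i=0}^{n-1}f(R^i(\omega,x))$, the sum in (a) can be rewritten as $Z_{p,n}\cdot\tilde\mu_n(\mathcal C_\alpha)$ where $\mathcal C_\alpha=F_f^{-1}([\alpha,\infty))=\{\nu\in\mathcal M(\Lambda)\colon\int fd\nu\geq\alpha\}$. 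Since Lemma~\ref{b-d} gives $\exp(-D_n(\varphi))\leq Z_{p,n}\leq\exp(D_n(\varphi))$ and Lemma~\ref{mild-lem} gives $D_n(\varphi)=O(\sqrt n)$, the factor $\frac{1}{n}\log Z_{p,n}$ tends to $0$, so (a) reduces to showing $\lim_{n\to\infty}\frac{1}{n}\log\tilde\mu_n(\mathcal C_\alpha)=-I_{p,f}(\alpha)$.

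Next I would invoke Proposition~\ref{CP} to obtain that $(\tilde\mu_n\circ F_f^{-1})_{n=1}^\infty$ satisfies the LDP on $\mathbb R$ with the good rate function $I_{p,f}$. Applying the upper bound to the closed set $[\alpha,\infty)$ and the lower bound to the open set $(\alpha,\infty)$ yields
\[\limsup_{n\to\infty}\frac{1}{n}\log\tilde\mu_n(\mathcal C_\alpha)\leq -\inf_{[\alpha,\infty)}I_{p,f},\]
\[\liminf_{n\to\infty}\frac{1}{n}\log\tilde\mu_n(\mathcal C_\alpha)\geq\liminf_{n\to\infty}\frac{1}{n}\log\tilde\mu_n(F_f^{-1}((\alpha,\infty)))\geq -\inf_{(\alpha,\infty)}I_{p,f}.\]
To close the gap I would use the convexity of $I_{p,f}$, which is inherited from that of $I_p$ (Theorem~\ref{level-2-thm}(a)) since $F_f$ is affine, together with the fact that $m_p\otimes\lambda_p$ is the unique minimizer of $I_p$ (so $\int f d(m_p\otimes\lambda_p)$ is the unique zero of $I_{p,f}$). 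When $\int f d(m_p\otimes\lambda_p)<\alpha\leq\overline f$, convexity forces $I_{p,f}$ to be non-decreasing to the right of the minimum, strictly positive on $(\int f d(m_p\otimes\lambda_p),\overline f]$, and continuous at every interior point of its effective domain, so $\inf_{[\alpha,\infty)}I_{p,f}=\inf_{(\alpha,\infty)}I_{p,f}=I_{p,f}(\alpha)>0$, proving (a). The argument for (b) is symmetric, using the half-lines $(-\infty,\alpha]$ and $(-\infty,\alpha)$.

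The main delicate point I anticipate is the right-continuity of $I_{p,f}$ at $\alpha$ needed to equate $\inf_{[\alpha,\infty)}I_{p,f}$ and $\inf_{(\alpha,\infty)}I_{p,f}$: at interior points of the effective domain this is automatic from convexity, but at the extreme endpoint $\alpha=\overline f$ (and symmetrically $\alpha=\underline f$ for (b)) it requires care. If $I_{p,f}(\overline f)=+\infty$ then both sides of the asserted identity are $-\infty$ and the upper bound alone gives the conclusion; if $I_{p,f}(\overline f)<\infty$, then by convexity $I_{p,f}$ is left-continuous at $\overline f$, and one can sharpen the lower bound by noting that for every $\varepsilon>0$, $\mathcal C_\alpha\supseteq F_f^{-1}((\overline f-\varepsilon,\infty))$, giving $\liminf\frac{1}{n}\log\tilde\mu_n(\mathcal C_\alpha)\geq-\inf_{(\overline f-\varepsilon,\infty)}I_{p,f}=-I_{p,f}(\overline f-\varepsilon)$; letting $\varepsilon\to 0^+$ and using left-continuity yields the matching lower bound. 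The remainder of the argument is routine bookkeeping.
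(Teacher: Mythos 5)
Your overall route is exactly the one the paper intends: the paper offers no detailed argument, saying only that the corollary ``follows from the Contraction Principle applied to the level-2 LDP in Theorem~\ref{level-2-thm}(a)'', and your reduction (rewrite the sum as $Z_{p,n}\,\tilde\mu_n(\mathcal C_\alpha)$, kill the normalization via Lemmas~\ref{b-d} and \ref{mild-lem}, push the LDP forward through the continuous affine map $F_f$, then use convexity of $I_{p,f}$ and the uniqueness of the minimizer of $I_p$ via the last assertion of Proposition~\ref{CP} to match the closed- and open-half-line infima and to get strict positivity) is the correct and complete way to fill that in. Everything up to and including the interior-point case is sound.

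The endpoint repair, however, is wrong as written: for $\alpha=\overline f$ the inclusion goes the other way, namely $\mathcal C_{\overline f}=F_f^{-1}([\overline f,\infty))\subseteq F_f^{-1}((\overline f-\varepsilon,\infty))$, so monotonicity of $\tilde\mu_n$ yields only
\[\tilde\mu_n(\mathcal C_{\overline f})\leq\tilde\mu_n\bigl(F_f^{-1}((\overline f-\varepsilon,\infty))\bigr),\]
an \emph{upper} bound, not the lower bound you need. The genuine difficulty is also not confined to $\alpha=\overline f$: it occurs at the right endpoint $d$ of the effective domain of $I_{p,f}$ (which may be strictly less than $\overline f$) whenever $I_{p,f}(d)<\infty$, because there $\inf_{(d,\infty)}I_{p,f}=+\infty>\inf_{[d,\infty)}I_{p,f}=I_{p,f}(d)$ and the LDP lower bound for the open interior of $[d,\infty)$ is vacuous. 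The contraction principle alone cannot close this gap; one either restricts $\alpha$ to lie strictly inside the effective domain, observes that the case $I_{p,f}(\alpha)=+\infty$ is handled by the upper bound alone (both sides are $-\infty$), or supplies a separate lower bound at the endpoint (e.g.\ by exhibiting periodic orbits whose empirical measures realize the infimum, which is not done in the paper either). You correctly sensed that this was the delicate point, but the fix you propose does not work.
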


      We apply Corollary~\ref{lyacor} to  the problem of frequency of digits in the random continued fraction expansion \eqref{r-expansion}. 
      Recall the algorithm in \S\ref{random-s}, and let us use the square bracket to denote the $2$-cylinders in $\Omega$: for $i,j\in\{0,1\}$, \[[ij]=\{\omega\in\Omega\colon\omega_1=i,\omega_2=j\}.\] 
Let $n\in\mathbb N$ and $(\omega,x)\in\Lambda$. For each $k\in\mathbb N$, 
  $C_n(\omega,x)=k$ holds if and only if $C(R^{n-1}(\omega,x))=k$ and $\omega_{n+1}=0$, or else 
  $C(R^{n-1}(\omega,x))=k-1$ and $\omega_{n+1}=1$.
  For each $m\in\mathbb N$, 
  $C(\omega,x)=m$ holds if and only if $\lfloor1/x\rfloor=m$ and $\omega_1=0$, or else
  $\lfloor1/(1-x)\rfloor=m$ and $\omega_1=1$.

  If $k=1$ then define 
  \[A_k=[00]\times\left(\frac{1}{k+1},\frac{1}{k}\right].\]
  If $k\geq2$ then define
  \[\begin{split}A_k=&\left([00]\times\left(\frac{1}{k+1},\frac{1}{k}\right]\right)\cup
\left([10]\times\left[\frac{k-1}{k},\frac{k}{k+1}\right)\right)\\
&\cup
\left([01]\times\left(\frac{1}{k},\frac{1}{k-1}\right]\right)\cup
\left([11]\times\left[\frac{k-2}{k-1},\frac{k-1}{k}\right)\right).\end{split}\]
Notice that
$C_n(\omega,x)=k$ holds if and only if $R^{n-1}(\omega,x)\in A_k$. 
Let $\1_{k}\colon\Lambda\to\mathbb R$ denote the indicator function of $A_k\cap\Lambda$.
Let $p\in(0,1)$.
 By Birkhoff's ergodic theorem, 
for $m_p\otimes\lambda_p$-almost every $(\omega,x)\in\Lambda$ we have
\[\lim_{n\to\infty}\frac{\#\{1\leq i\leq n\colon C_i(\omega,x)=k\}}{n}=\int \1_kd(m_p\otimes\lambda_p).\]
Clearly, $\1_k$ is bounded continuous and satisfies 
$\underline{\1_k}=0$, $\overline{\1_k}=1$, 
$0<\int  \1_{k} d(m_p\otimes\lambda_p)<1$. By Corollary~\ref{lyacor} the following hold:
 \begin{itemize}
\item   if $\int  \1_{k} d(m_p\otimes\lambda_p)<\alpha\leq 1$ then \[\lim_{n\to\infty}\frac{1}{n}\log\sum_{\substack{(\omega,x)\in{\rm Fix}(R^n)\\ \frac{\#\{1\leq i\leq n\colon C_i(\omega,x)=k\}}{n}\geq\alpha}} Q_p^n(\omega)|(T^n_\omega)'x|^{-1}
      =-I_{p,\1_k}(\alpha)<0;\]

\item  if $0\leq\alpha<\int\1_k d(m_p\otimes\lambda_p)$ then
\[\lim_{n\to\infty}\frac{1}{n}\log\sum_{\substack{(\omega,x)\in{\rm Fix}(R^n)\\ \frac{\#\{1\leq i\leq n\colon C_i(\omega,x)=k\}}{n}\leq\alpha}} Q_p^n(\omega)|(T^n_\omega)'x|^{-1}
      =-I_{p,\1_k}(\alpha)<0.\]
      \end{itemize}    

Recall the notation in \S\ref{expansion-sec}. 
If $n\geq2$ then the indicator function of $A_k$
 is constant on each $n$-cylinder $\varDelta(a_1\cdots a_n)$. Moreover, each $n$-cylinder  contains exactly one point from ${\rm Fix}(R^n)$, and if $(\omega,x)\in\varDelta(a_1\cdots a_n)\cap{\rm Fix}(R^n)$ then by Lemma~\ref{mild-lem},
$Q_p^n(\omega)|(T^n_\omega)'x|^{-1}$ is comparable to $(m_p\otimes\lambda_p)(\varDelta(a_1\cdots a_n))$ up to the  subexponential factor $\exp(D_n(\varphi))$. Hence, the above annealed level-1 LDP for periodic points of $R$ extends to an annealed level-1 LDP for $m_p\otimes\lambda_p$-typical points:

\begin{itemize}
\item      if $\int  \1_{k} d(m_p\otimes\lambda_p)<\alpha\leq 1$ then
       \[\lim_{n\to\infty}\frac{1}{n}\log (m_p\otimes\lambda_p)\left\{(\omega,x)\in\Lambda\colon \frac{\#\{1\leq i\leq n\colon C_i(\omega,x)=k\}}{n}\geq\alpha\right\} 
      =-I_{p,\1_k}(\alpha);\]

 \item     if $0\leq\alpha<\int  \1_{k} d(m_p\otimes\lambda_p)$ then
       \[\lim_{n\to\infty}\frac{1}{n}\log (m_p\otimes\lambda_p)\left\{(\omega,x)\in\Lambda\colon \frac{\#\{1\leq i\leq n\colon C_i(\omega,x)=k\}}{n}\leq\alpha\right\} 
      =-I_{p,\1_k}(\alpha).\]
\end{itemize}

We now move on to a quenched counterpart.
The next corollary of independent interest is a consequence of 
 Theorem~\ref{ldpup-q}(a). Since it only gives an upper bound for closed sets, 
we only get inequalities for upper limits which should not be optimal.
\begin{cor}[quenched level-1 upper bounds]\label{lyacor2}
Let
$f\colon\Lambda\to\mathbb R$ be a bounded continuous function
 such that $\underline{f}<\overline{f}$. For any $p\in(0,1)$ the following statements hold: 
 \begin{itemize}
\item[(a)]   if $\int f d(m_p\otimes\lambda_p)<\alpha\leq\overline f$ then for $m_p$-almost every $\omega\in\Omega$, \[\limsup_{n\to\infty}\frac{1}{n}\log\sum_{\substack{x\in{\rm Fix}(T_\omega^n)\\ (1/n)\sum_{i=0}^{n-1}f( T_\omega^ix)\geq\alpha}} |(T^n_\omega)'x|^{-1}
      \leq-I_{p,f}(\alpha)<0;\]
      \item[(b)]   if $\underline{f}\leq\alpha<\int f d(m_p\otimes\lambda_p)$ then for $m_p$-almost every $\omega\in\Omega$, \[\limsup_{n\to\infty}\frac{1}{n}\log\sum_{\substack{x\in{\rm Fix}(T_\omega^n)\\ (1/n)\sum_{i=0}^{n-1}f( T_\omega^ix)\leq\alpha}} |(T^n_\omega)'x|^{-1}
      \leq-I_{p,f}(\alpha)<0.\]
      \end{itemize}    \end{cor}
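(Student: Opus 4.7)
The plan is to derive Corollary~\ref{lyacor2} as a routine transfer of the quenched level-2 upper bound in Theorem~\ref{ldpup-q}(a) to a level-1 statement, via the continuous functional $\nu\mapsto\int f\,d\nu$ on $\mathcal{M}(\Lambda)$. For part~(a), I would fix $\alpha\in(\int f\,d(m_p\otimes\lambda_p),\overline{f}]$ and consider
\[\mathcal{C}_\alpha=\left\{\nu\in\mathcal{M}(\Lambda):\int f\,d\nu\geq\alpha\right\},\]
which is closed in the weak* topology because $f$ is bounded continuous. Since $\int f\,dV_n^R(\omega,x)=\frac{1}{n}\sum_{i=0}^{n-1}f(R^i(\omega,x))$, the definition of $\tilde\mu_n^\omega$ gives the key identity
\[\sum_{\substack{x\in\mathrm{Fix}(T_\omega^n)\\ (1/n)\sum_{i=0}^{n-1}f( T_\omega^ix)\geq\alpha}}|(T_\omega^n)'x|^{-1}=Z_{\omega,n}\cdot\tilde\mu_n^\omega(\mathcal{C}_\alpha).\]

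Next I would dispose of the normalizing factor $Z_{\omega,n}$. By Lemma~\ref{b-d} one has $|\log Z_{\omega,n}|\leq D_n(\varphi)$, and by Lemma~\ref{mild-lem} this is $O(\sqrt{n})$, so $\frac{1}{n}\log Z_{\omega,n}\to 0$ for every $\omega\in\Omega$. Taking $\frac{1}{n}\log$ of the identity above and invoking the quenched upper bound in Theorem~\ref{ldpup-q}(a) applied to the closed set $\mathcal{C}_\alpha$ then yields, for $m_p$-almost every $\omega\in\Omega$,
\[\limsup_{n\to\infty}\frac{1}{n}\log\sum_{\substack{x\in\mathrm{Fix}(T_\omega^n)\\ (1/n)\sum_{i=0}^{n-1}f( T_\omega^ix)\geq\alpha}}|(T_\omega^n)'x|^{-1}\leq-\inf_{\mathcal{C}_\alpha}I_p.\]

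The remaining step is to identify $\inf_{\mathcal{C}_\alpha}I_p$ with $I_{p,f}(\alpha)$ and to verify its strict positivity. By definition $\inf_{\mathcal{C}_\alpha}I_p=\inf_{\beta\geq\alpha}I_{p,f}(\beta)$. Since $I_{p,f}$ is convex (inherited from the convexity of $I_p$ in Theorem~\ref{level-2-thm}(a)) and vanishes only at $\alpha_0:=\int f\,d(m_p\otimes\lambda_p)<\alpha$, it is non-decreasing on $[\alpha_0,\overline{f}]$; hence the infimum is attained at $\beta=\alpha$, and $\inf_{\mathcal{C}_\alpha}I_p=I_{p,f}(\alpha)>0$. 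Part~(b) follows by the same argument applied to the closed set $\{\nu:\int f\,d\nu\leq\alpha\}$, using that $I_{p,f}$ is non-increasing on $[\underline{f},\alpha_0]$.

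I do not foresee a genuine obstacle once Theorem~\ref{ldpup-q}(a) is in hand; the only delicate point is that the $m_p$-full-measure set of samples on which the quenched level-2 upper bound holds may a priori depend on the closed set, so one should run the argument for a single fixed $\alpha$ at a time (which is what the statement requires), rather than seek a uniform full-measure event over all $\alpha$. This is precisely why only upper bounds for upper limits, and not a matching lower bound or an equality, are asserted.
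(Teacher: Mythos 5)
Your proposal is correct and follows exactly the route the paper intends: the corollary is stated as a direct consequence of Theorem~\ref{ldpup-q}(a) applied to the closed half-space $\{\nu\colon\int f\,d\nu\geq\alpha\}$, with the normalizing constant $Z_{\omega,n}$ absorbed via Lemmas~\ref{b-d} and~\ref{mild-lem}, and the identification $\inf_{\mathcal C_\alpha}I_p=I_{p,f}(\alpha)$ via convexity of $I_{p,f}$ and uniqueness of the minimizer. Your closing caveat is harmless but unnecessary, since Theorem~\ref{ldpup-q}(a) already provides a single full-measure set of samples valid for all closed subsets simultaneously.
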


Let $p\in(0,1)$ and $k\in\mathbb N$.
By Birkhoff's ergodic theorem and Fubini's theorem, 
for $m_p$-almost every $\omega\in\Omega$ and $\lambda_p$-almost every $x\in\Lambda_\omega$ we have
\[\lim_{n\to\infty}\frac{\#\{1\leq i\leq n\colon C_i(\omega,x)=k\}}{n}=\int \1_kd(m_p\otimes\lambda_p).\]
Corollary~\ref{lyacor2} yields the following:
 \begin{itemize}
\item   if $\int  \1_{k} d(m_p\otimes\lambda_p)<\alpha\leq 1$ then for $m_p$-almost every $\omega\in\Omega$, \[\limsup_{n\to\infty}\frac{1}{n}\log\sum_{\substack{x\in{\rm Fix}(T_\omega^n)\\ \frac{\#\{1\leq i\leq n\colon C_i(\omega,x)=k\}}{n}\geq\alpha}} |(T^n_\omega)'x|^{-1}
      \leq-I_{p,\1_k}(\alpha);\]

\item  if $0\leq\alpha<\int\1_k d(m_p\otimes\lambda_p)$ then for $m_p$-almost every $\omega\in\Omega$, 
\[\limsup_{n\to\infty}\frac{1}{n}\log\sum_{\substack{x\in{\rm Fix}(T_\omega^n)\\ \frac{\#\{1\leq i\leq n\colon C_i(\omega,x)=k\}}{n}\leq\alpha}} |(T^n_\omega)'x|^{-1}
      \leq-I_{p,\1_k}(\alpha).\]
      \end{itemize}

 Recall the notation in \S\ref{expansion-sec} again.
 Let $\omega\in\Omega$, $n\in\mathbb N$ and let $a_1\cdots a_n\in\mathbb N^{\mathbb N}$ satisfy
 $\omega_i\equiv a_i$ mod $2$ for $i=1,\ldots,n$. If $n\geq2$ then the restriction of the indicator function of $A_k$ to $\{\omega\}\times J(a_1\cdots a_n)$ is constant. Clearly,
 $J(a_1\cdots a_n)\cap {\rm Fix}(T_\omega^n)$ is a singleton.
 If $x\in J(a_1\cdots a_n)\cap{\rm Fix}(T_\omega^n)$, then by Lemma~\ref{mild-lem},
$|(T^n_\omega)'x|^{-1}$ is comparable to $\lambda_p(J(a_1\cdots a_n))$ up to the  subexponential factor $\exp(D_n(\varphi))$. Hence, the above quenched level-1 upper bounds extend to quenched level-1 upper bounds for $\lambda_p$-typical points:

\begin{itemize}
\item      if $\int  \1_{k} d(m_p\otimes\lambda_p)<\alpha\leq 1$ then for $m_p$-almost every $\omega\in\Omega$, 
       \[\limsup_{n\to\infty}\frac{1}{n}\log \lambda_p\left\{x\in(0,1)\setminus\mathbb Q\colon \frac{\#\{1\leq i\leq n\colon C_i(\omega,x)=k\}}{n}\geq\alpha\right\} 
      \leq-I_{p,\1_k}(\alpha);\]

 \item     if $0\leq\alpha<\int  \1_{k} d(m_p\otimes\lambda_p)$ then for $m_p$-almost every $\omega\in\Omega$, 
       \[\limsup_{n\to\infty}\frac{1}{n}\log \lambda_p\left\{x\in(0,1)\setminus\mathbb Q\colon \frac{\#\{1\leq i\leq n\colon C_i(\omega,x)=k\}}{n}\leq\alpha\right\} 
      \leq-I_{p,\1_k}(\alpha).\]
\end{itemize}

\appendix

\def\thesection{\Alph{section}}

\section{Periodic continued fractions}\label{determ-p}

The classical Lagrange theorem asserts that the regular continued fraction expansion of a quadratic irrational is eventually periodic. So, any quadratic irrational in $(0,1)$ is eventually periodic under the iteration of the Gauss map.
This appendix is a brief summary of known characterizations of periodic continued fractions in terms of iterations of the Gauss and R\'enyi maps.
For a quadratic irrational $x\in\mathbb R$, let $x^\dagger$ denote its Galois conjugate.
\begin{prop}[\cite{Gal}]\label{T0}
Let $x\in(0,1)$. The following are equivalent:
\begin{itemize}
    \item[(a)] $x$ is a quadratic irrational and $x^\dagger<-1$.
    \item[(b)] There exists $n\in\mathbb N$ such that $T_0^nx=x$.
\end{itemize}
\end{prop}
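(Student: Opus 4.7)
The plan is to pass to the reciprocal $\alpha = 1/x$ and reduce the statement to the classical Galois theorem on purely periodic regular continued fractions. Since $x \in (0,1)$ and $T_0 y = 1/y - \lfloor 1/y \rfloor$ produces the digits of the regular continued fraction of $1/y$, the identity $T_0^n x = x$ is equivalent to $\alpha > 1$ admitting a purely periodic regular expansion $[\overline{a_1; \ldots; a_n}]$. Because algebraic conjugation in a real quadratic field commutes with reciprocation, $\alpha^\dagger = 1/x^\dagger$, and the condition $-1 < \alpha^\dagger < 0$ translates precisely to $x^\dagger < -1$. Thus the proposition reduces to Galois's theorem: $\alpha > 1$ has a purely periodic regular continued fraction expansion if and only if $\alpha$ is a quadratic irrational with $\alpha^\dagger \in (-1, 0)$.

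For the direction (b) $\Rightarrow$ (a) of Galois, I would exploit pure periodicity directly. Writing $\alpha = [\overline{a_1; \ldots; a_n}]$ and invoking the convergent recursion yields the M\"obius identity $\alpha = (p_n \alpha + p_{n-1})/(q_n \alpha + q_{n-1})$, giving the integer quadratic $q_n \alpha^2 + (q_{n-1} - p_n)\alpha - p_{n-1} = 0$. Pure periodicity forces the expansion to be non-terminating, so $\alpha$ is irrational and therefore a genuine quadratic irrational; evaluating the polynomial at $0$ and at $-1$ using the elementary monotonicity $p_n > p_{n-1}$, $q_n > q_{n-1}$ of the convergents places the other root $\alpha^\dagger$ in $(-1, 0)$.

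For (a) $\Rightarrow$ (b), let $S = \{\beta > 1 : \beta \text{ quadratic irrational}, -1 < \beta^\dagger < 0\}$. First I would show $S$ is forward-invariant under the Gauss shift $\beta \mapsto 1/(\beta - \lfloor \beta \rfloor)$: if $\beta \in S$ then $\beta^\dagger - \lfloor \beta \rfloor < -1$, and its reciprocal again lies in $(-1, 0)$. Starting with $\alpha \in S$, every complete quotient $\alpha_k$ lies in $S$, and since they all belong to the same real quadratic field and have bounded trace and norm, only finitely many values are attained, so $\alpha_k = \alpha_m$ for some $k < m$. The partial quotient $\lfloor \alpha_k \rfloor$ can be recovered from $\alpha_{k+1}$ alone as $\lfloor -1/\alpha_{k+1}^\dagger \rfloor$, using $\alpha_{k+1} \in S$; this injectivity of the shift on $S$ propagates the equality back to $\alpha_1 = \alpha_{m-k+1}$, giving pure periodicity.

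The main obstacle is the pure-periodicity step in (a) $\Rightarrow$ (b): the finiteness argument alone produces only eventual periodicity (Lagrange's theorem), and the leap to purity depends precisely on invertibility of the Gauss shift on $S$, which in turn depends on the Galois conjugate remaining confined to $(-1,0)$. If the appendix prefers to simply cite Galois's theorem as in \cite{Gal}, the proof of Proposition~\ref{T0} collapses to the short reduction in the first paragraph.
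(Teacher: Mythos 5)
Your proof is correct. Note first that the paper itself supplies no proof of Proposition~\ref{T0}: it is stated with the citation to \cite{Gal} and the appendix only proves the R\'enyi-map analogue, Proposition~\ref{T1}. Your first paragraph is exactly the reduction the authors implicitly rely on -- $T_0^nx=x$ is pure periodicity of the digit string of $x$, hence of the regular expansion of $\alpha=1/x>1$, and since conjugation is a field automorphism the condition $x^\dagger<-1$ is equivalent to $\alpha^\dagger\in(-1,0)$, so the statement is Galois's theorem verbatim. Your second and third paragraphs then give the standard proof of Galois's theorem itself (the quadratic $q_n\alpha^2+(q_{n-1}-p_n)\alpha-p_{n-1}=0$ with sign changes at $0$ and $-1$ for one direction; forward invariance of the set of reduced quadratic irrationals, finiteness, and recovery of $\lfloor\alpha_k\rfloor=\lfloor-1/\alpha_{k+1}^\dagger\rfloor$ to upgrade Lagrange's eventual periodicity to pure periodicity for the other), and these steps are all sound. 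It is worth observing that your strategy parallels the paper's treatment of Proposition~\ref{T1}, which likewise translates the statement into a known pure-periodicity theorem (Katok's result \cite{Katok01} for minus continued fractions) via an algebraic change of variable rather than proving periodicity from scratch; your version simply carries out the analogous reduction for $T_0$ and, in addition, proves the target theorem rather than citing it.
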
 
Although much less known, 
statements analogous to Proposition~\ref{T0} hold for the R\'enyi map.
\begin{prop}\label{T1}
Let $x\in(0,1)$. 
The following are equivalent:

\begin{itemize}

\item[(a)] $x$ is a quadratic irrational and $x^\dagger<0$.

\item[(b)] There exists $n\in\mathbb N$ such that $T_1^nx=x$.
\end{itemize}
\end{prop}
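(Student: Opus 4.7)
The strategy is to mirror the proof of Proposition~\ref{T0} (Galois's theorem for the Gauss map), adapting it to the branch structure of $T_1$; the crucial replacement is the condition $x^\dagger<-1$ by $x^\dagger<0$. For (b)$\Rightarrow$(a), I would view each branch of $T_1$ as the M\"obius transformation $g_k(y)=1-1/(k+y)=((k-1)+y)/(k+y)$ with matrix $M_k=\left(\begin{smallmatrix}1 & k-1\\ 1 & k\end{smallmatrix}\right)\in SL(2,\mathbb Z)$, and the algorithm of $\S$\ref{random-s} with $\omega=1^\infty$ yields $T_1^{i-1}x=g_{k_i}(T_1^ix)$ for $k_i=\lfloor1/(1-T_1^{i-1}x)\rfloor\geq1$. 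Thus $T_1^nx=x$ becomes $x=(ax+b)/(cx+d)$ with $M_{k_1}\cdots M_{k_n}=\left(\begin{smallmatrix}a & b\\ c & d\end{smallmatrix}\right)$, i.e.\ $cx^2+(d-a)x-b=0$; since each $M_k$ is entrywise non-negative, a short induction gives $c>0$, while applying the product to the column vector $(0,1)^\top$ shows $b=0$ only if every $k_i=1$, a case that would force $x=0\notin(0,1)$. Hence $b,c>0$, $x$ is a quadratic irrational, and Vieta's formula $xx^\dagger=-b/c$ gives $x^\dagger=-b/(cx)<0$.

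For (a)$\Rightarrow$(b), write $x=(P+\sqrt D)/Q$ in normal form, meaning $Q>0$ and $Q\mid P^2-D$. I would first observe that the property ``$y\in(0,1)$ and $y^\dagger<0$'' is preserved under $T_1$: for $k=\lfloor1/(1-y)\rfloor\geq1$, the condition $y^\dagger<0$ forces $1/(1-y^\dagger)\in(0,1)$, hence $(T_1y)^\dagger=1/(1-y^\dagger)-k\in(-k,1-k)\subset(-\infty,0)$. A direct rationalization, using $((Q-P)^2-D)/Q=Q-2P+(P^2-D)/Q\in\mathbb Z$, yields the explicit recursion
\[Q_{n+1}=\frac{(Q_n-P_n)^2-D}{Q_n},\qquad P_{n+1}=(Q_n-P_n)-k_{n+1}Q_{n+1},\]
and preserves the normal form, so $T_1^nx=(P_n+\sqrt D)/Q_n$ for every $n$. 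The sign conditions then force $Q_n>0$ and $|P_n|<\sqrt D$, while $Q_n\mid P_n^2-D$ together with $P_n^2-D<0$ forces $0<Q_n\leq D-P_n^2<D$. Hence $(P_n,Q_n)$ takes only finitely many values, so the sequence is eventually periodic: $T_1^mx=T_1^{m+N}x$ for some $m\geq0$ and $N\geq1$.

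To upgrade eventual periodicity to pure periodicity, suppose $m\geq1$ and set $y=T_1^mx=T_1^{m+N}x$. Both $T_1^{m-1}x$ and $T_1^{m+N-1}x$ are preimages of $y$ under $T_1$, each of the form $1-1/(k+y)$ for some positive integer $k$. The Galois conjugate of such a preimage is $1-1/(k+y^\dagger)$, so the preserved condition that it lie in $(-\infty,0)$ translates into $k+y^\dagger\in(0,1)$; since $y^\dagger$ is irrational, this singles out a unique positive integer $k$, forcing $T_1^{m-1}x=T_1^{m+N-1}x$. Downward induction on $m$ then yields $x=T_1^Nx$. The main technical obstacle will be the bookkeeping involved in deriving the normal-form recursion and checking that the sign and divisibility constraints really do confine $(P_n,Q_n)$ to a finite set; once that is set up correctly, the remainder follows the classical pattern used for $T_0$.
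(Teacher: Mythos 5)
Your proof is correct, but it takes a genuinely different route from the paper. The paper does not argue directly with $T_1$ at all: it conjugates the backward continued fraction to Katok's \emph{minus} continued fraction via $x\mapsto(1-x)^{-1}$, checks that this map sends the Galois conjugate $x^\dagger$ to $(1-x^\dagger)^{-1}$ (so that $x^\dagger<0$ becomes $0<(1-x^\dagger)^{-1}<1$ while $x\in(0,1)$ becomes $(1-x)^{-1}>1$), and then invokes Katok's characterization of purely periodic minus continued fractions (Proposition~\ref{katok-lem}) as a black box. You instead reprove the whole statement from scratch in the $T_1$-coordinates: for (b)$\Rightarrow$(a) you realize the inverse branches as the non-negative $SL(2,\mathbb Z)$ matrices $M_k$ and read off $x^\dagger=-b/(cx)<0$ from Vieta, and for (a)$\Rightarrow$(b) you run the classical reduced-surd argument (invariance of the condition $x^\dagger<0$, the normal-form recursion for $(P_n,Q_n)$, finiteness, and the uniqueness-of-admissible-preimage step that upgrades eventual periodicity to pure periodicity). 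All of these steps check out — in particular the bound $0<Q_n\leq D-P_n^2$ and the uniqueness of the integer $k$ with $k+y^\dagger\in(0,1)$ are exactly right — so what you have is essentially an independent proof of Katok's theorem transported to $T_1$. The trade-off is clear: the paper's argument is a few lines of algebra plus a citation, while yours is self-contained but carries the bookkeeping burden you anticipate. The one point you should make explicit in (b)$\Rightarrow$(a) is why $x$ is \emph{irrational}: the quadratic $cx^2+(d-a)x-b=0$ could a priori have rational roots, so you need to rule out rational periodic points, e.g.\ by noting that for $x=p/q\in(0,1)$ in lowest terms the denominator of $T_1x$ is $q-p<q$, so every rational orbit terminates at $0$ and cannot be periodic in $(0,1)$. (The paper's own proof is equally terse on this point, so this is a remark rather than a gap peculiar to your argument.)
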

For the reader's convenience we include a proof of Proposition~\ref{T1} below.
The idea is to translate 
analogous statements in \cite{Katok01} on the minus continued fraction to the backward continued fraction via simple algebraic manipulations.

Let $x\in\mathbb R$. We define a sequence $(x_n)_{n=0}^\infty$ of real numbers by 
\[x_0=x\ \text{ and }\ x_n=\frac{1}{\lfloor x_{n-1}\rfloor+1-x_{n-1}}\ \text{ for }n\geq1.\]
For $n\geq0$ put 
\[D_n(x)=\lfloor x_n\rfloor+1.\]
 For $n\geq1$, note that $D_n(x)\geq2$ 
since $x_n\geq1$.
For $n\geq1$ we set 
\[r_n(x)=D_0(x)-\confrac{1 }{D_{1}(x)} - \cdots -\confrac{1 }{D_{n}(x)}.\]
By \cite[Theorem~1.1]{Katok01} we obtain $x=\lim_n r_n(x)$, which is the {\it minus continued fraction expansion} of $x$: 
\[x=D_0(x)-\confrac{1 }{D_{1}(x)}-\confrac{1}{D_2(x)}-\cdots -\confrac{1 }{D_{n}(x)}-\cdots.\]
We say $x$ has a {\it purely periodic minus continued fraction expansion} of period $N+1$ if there exists $N\in\mathbb N$ such that 
\[x=D_0(x)-\confrac{1 }{D_{1}(x)}-\confrac{1}{D_2(x)}-\cdots -\confrac{1 }{D_{N}(x)}-\confrac{1}{x}.\]

\begin{prop}[{\cite[Theorem~1.4]{Katok01}}]\label{katok-lem}
    Let $x\in\mathbb R$ be a quadratic irrational. Then $x$ has a purely periodic minus continued fraction expansion if and only if $x > 1$ and $0 < x^\dagger < 1$.
    \end{prop}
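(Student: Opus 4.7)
My plan is to prove this Galois-type characterization by adapting the classical argument for reduced quadratic irrationals in the theory of regular continued fractions. Writing $T(y) = 1/(\lfloor y \rfloor + 1 - y)$ for the minus-continued-fraction map, I will analyse its forward dynamics on quadratic irrationals alongside the Galois-conjugate action $y^\dagger \mapsto T(y)^\dagger$, which is governed algebraically by the same M\"obius substitution $Y \mapsto 1/(D - Y)$.

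For the reverse direction, suppose $x > 1$ and $0 < x^\dagger < 1$, and let $D$ be the discriminant of its minimal polynomial. I will first show that the set $\mathcal{R}_D$ of quadratic irrationals $y > 1$ with $0 < y^\dagger < 1$ and minimal polynomial discriminant $D$ is $T$-invariant: a direct calculation gives $T(y) > 1$ and $T(y)^\dagger = 1/(D(y) - y^\dagger) \in (0,1)$ since $D(y) = \lfloor y \rfloor + 1 \geq 2$, and the substitution preserves discriminants by a short determinant computation. Next I will prove $\mathcal{R}_D$ is finite: writing the primitive minimal polynomial as $aY^2 + bY + c$ with $a > 0$, the conditions $y > 1$ and $0 < y^\dagger < 1$ force $c > 0$ and $a + b + c < 0$ (evaluating the upward parabola at $Y = 1$); setting $B = -b > 0$ and $k = B - (a+c) \geq 1$ recasts the discriminant equation as
\[
(a-c)^2 + 2(a+c)k + k^2 = D,
\]
which bounds $k \leq D/4$, then $|a-c| \leq \sqrt{D}$, and then $a+c \leq D/(2k)$, yielding finitely many admissible triples. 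Since $x \in \mathcal{R}_D$ and the set is finite and $T$-invariant, the orbit must collide: $x_m = x_k$ for some $m < k$. Finally I will establish reversibility on $\mathcal{R}_D$: from $x_{n-1}^\dagger = D_{n-1} - 1/x_n^\dagger$ together with $x_{n-1}^\dagger \in (0,1)$, the digit is forced to be $D_{n-1} = \lfloor 1/x_n^\dagger \rfloor + 1$, so $x_{n-1}$ is uniquely determined by $x_n$ (its conjugate being algebraically determined). Iterating backward from $x_m = x_k$ yields $x_0 = x_{k-m}$, which is pure periodicity.

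For the forward direction, suppose $x$ has a purely periodic expansion of period $N+1$. Then $x = x_{N+1} > 1$ automatically, because $D_{n-1} - x_{n-1} \in (0,1)$ forces $x_n > 1$ for every $n \geq 1$, whence every $D_n \geq 2$. For the conjugate, let $M = f_N \circ f_{N-1} \circ \cdots \circ f_0$ with $f_i(y) = 1/(D_i - y)$; then $M$ is a hyperbolic element of $\mathrm{SL}(2,\mathbb{Z})$ with fixed-point set $\{x, x^\dagger\}$. The key geometric observation is that $M$ maps the open interval $(0,1)$ into itself, because each factor $f_i$ sends $(0,1)$ into $(1/D_i, 1/(D_i - 1)) \subset (0,1)$ as soon as $D_i \geq 2$. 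The $\mathrm{SL}_2$ chain-rule identity gives $M'(x)\, M'(x^\dagger) = 1$, while uniform expansion of $T$ implies $|M'(x)| > 1$; hence $x^\dagger$ is the attracting fixed point. Every orbit in the $M$-invariant interval $(0,1)$ therefore converges to a fixed point in $\overline{(0,1)}$; since $x > 1$ lies outside, we must have $x^\dagger \in [0,1]$, and irrationality upgrades this to $x^\dagger \in (0,1)$.

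The most delicate step will be the finiteness of $\mathcal{R}_D$: unlike the classical reduced case $-1 < y^\dagger < 0$, where the single inequality $y - y^\dagger > 1$ yields $a < \sqrt{D}$ immediately, here $y - y^\dagger$ can be arbitrarily small, so the three inequalities above must be combined into a single positive Diophantine equation in $(a-c, a+c, k)$ before finiteness drops out. The reversibility step is the decisive piece of new content that promotes eventual periodicity to pure periodicity, just as in Galois's original theorem.
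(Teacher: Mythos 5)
The paper does not prove this proposition; it is quoted verbatim from Katok's notes (\cite[Theorem~1.4]{Katok01}), so there is no internal proof to compare against. Your argument is a correct, self-contained proof along the classical Galois/reduction-theory lines, which is also the strategy of the cited source: for the ``if'' direction, invariance of the reduced set $\{y>1,\ 0<y^\dagger<1\}$ under $y\mapsto 1/(\lfloor y\rfloor+1-y)$, finiteness of reduced surds of fixed discriminant (your Diophantine bookkeeping with $k=-(a+b+c)\geq 1$, $c\geq 1$ is sound, since the unimodular substitution preserves both the discriminant and primitivity), and backward uniqueness via $D_{n-1}=\lfloor 1/x_n^\dagger\rfloor+1$ to upgrade eventual periodicity to pure periodicity; for the ``only if'' direction, the hyperbolic M\"obius element $M$ fixing $x$ with $M'(x)=\prod_i x_{i+1}^2>1$ and $M'(x)M'(x^\dagger)=1$, together with $M((0,1))\subset(0,1)$ (valid because every digit satisfies $D_i\geq 2$), forcing the attracting fixed point $x^\dagger$ into $[0,1]$ and hence, by irrationality, into $(0,1)$. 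I see no gap; the only steps worth writing out fully in a final version are the discriminant/primitivity preservation under the substitution and the observation that $a+b+c$ is a nonzero integer (else $1$ would be a root), which is what gives $k\geq 1$.
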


\begin{proof}[Proof of Proposition~\ref{T1}]
    Let $x\in(0,1)$ be a quadratic irrational. 
    There is a quadratic equation $az^2+bz+c=0$ with integer coefficients whose solutions are $x,x^\dagger$.
This equation is equivalent to
$a(1-z)^2-(b+2a)(1-z)+(a+b+c)=0$.
We have $a+b+c\neq0$, for otherwise
$z=1$ would be a solution of the equation.
For $z\in\{x,x^\dagger\}$ we have
\[(a+b+c)\Bigl((1-z)^{-1}\Bigr)^2-(b+2a)(1-z)^{-1}+a=0.\]
Hence, 
$(1-x)^{-1}$ is a quadratic irrational whose Galois conjugate is $(1-x^\dagger)^{-1}$. 

Let $x\in(0,1)$ be a quadratic irrational and 
suppose $x^\dagger<0$.
Then $0<(1-x^\dagger)^{-1}<1$ holds.
Since $(1-x)^{-1}>1$,
by Proposition~\ref{katok-lem} there exists an integer $n\geq2$ such that the minus continued fraction expansion of  $(1-x)^{-1}$ is periodic of period of $n$:
\[
\frac{1}{1-x}=D_0(x)-\confrac{1 }{D_{1}(x)} -\cdots- \confrac{1 }{D_{n-1}(x)}  -\cdots\-\confrac{1}{D_0(x)}-\cdots-\confrac{1}{D_{n-1}(x)}-\cdots,
\]
where $D_i(x)\geq2$ for $i=0,\ldots,n-1$. Rearranging this equality gives 
\[x=1-\confrac{1}{D_0(x)}-\cdots-\confrac{1}{D_{n-1}(x)}-\confrac{1}{D_0(x)}-\cdots.\]
From this and the uniqueness of the backward continued fraction given by the R\'enyi map $T_1$, we obtain $T^{n}_1x=x$.

Conversely, suppose there exists $n\in\mathbb N$ such that $T_1^nx=x$. Then the backward continued fraction of $x$ given by $T_1$ is periodic of period $n$, and we have
\begin{equation*}
    x=
1-\confrac{1 }{B_{1}(x)} -\cdots-\confrac{1}{B_n(x)-1-x},
\end{equation*}
where $B_i(x)=\lfloor 1/(1-T_1^{i-1}x)\rfloor+1$ for $i=1,\ldots,n$. Since this fraction can be represented by $ax+b/(cx+d)$ for some $a,b,c,d\in\mathbb Z$ with $ad-bc=1$
(see e.g., \cite{IosKra02}), $x$ is a quadratic irrational.
As in the first paragraph, $(1-x)^{-1}$ is a quadratic irrational whose Galois conjugate is $(1-x^\dagger)^{-1}$.
Since the backward continued fraction expansion of $x$ is periodic, 
the minus continued fraction expansion of $(1-x)^{-1}$ is periodic. Proposition~\ref{katok-lem} yields $0<(1-x^\dagger)^{-1}<1$, and so $x^\dagger<0$ as required.
\end{proof}

\subsection*{Acknowledgments} 
We thank Karma Dajani and Cor Kraaikamp for fruitful discussions during their visit to Keio University. SS was supported by the JSPS KAKENHI 24K16932, Grant-in-Aid for Early-Career Scientists. 
HT was supported by the JSPS KAKENHI 
 25K21999, Grant-in-Aid for Challenging Research (Exploratory).

\subsection*{
 Data Availability} This article has no associated data and material.

\subsection*{Conflict of interest}
The authors have no conflicts of interest to declare that are relevant to the content of this article.

\end{document}